\documentclass[11pt]{article}

\usepackage[T1]{fontenc}
\usepackage[utf8]{inputenc}
\usepackage{lmodern}
\usepackage{amsmath,amssymb,amsthm,mathtools}
\usepackage{microtype}
\usepackage[hidelinks]{hyperref}

\newtheorem{theorem}{Theorem}[section]
\newtheorem{proposition}[theorem]{Proposition}
\newtheorem{lemma}[theorem]{Lemma}
\newtheorem{corollary}[theorem]{Corollary}
\theoremstyle{definition}

\theoremstyle{remark}
\newtheorem{remark}[theorem]{Remark}

\DeclareMathOperator{\Tr}{Tr}

\DeclareMathOperator{\argmin}{arg\,min}

\DeclareMathOperator{\supp}{supp}
\DeclareMathOperator{\Lip}{Lip}

\newcommand{\B}{\mathbb{B}}
\newcommand{\C}{\mathbb{C}}
\newcommand{\R}{\mathbb{R}}
\newcommand{\Halpha}{\mathcal{H}_\alpha}
\newcommand{\nualpha}{\nu_\alpha}
\newcommand{\kalpha}[1]{k_{#1}^{\alpha}}
\newcommand{\Talpha}[1]{T_{#1}^{\alpha}}
\newcommand{\Ba}[1]{\widetilde{#1}^{\,\alpha}}

\newcommand{\eps}{\varepsilon}

\title{Orthogonality, Spectral Filling, and Uncertainty}
\author{Travis Alvarez \and Trevor Camper \and Mishko Mitkovski}
\date{}

\begin{document}

\maketitle

\begin{abstract}
Let a normalized continuous Parseval frame describe the phase space of a
reproducing kernel Hilbert space.  The frame coefficients of an orthonormal
family have total mass equal to the number of states and pointwise mass at
most one.  This elementary exclusion principle gives explicit lower bounds
for the total cost of a family in terms of the volume of low-cost regions.
For radial costs, it yields collective mean-dispersion, umbrella, and
transport inequalities.  The same argument applies to normalized Bessel
families, with the Bessel bound measuring the permitted overlap.

We also consider semiclassical families for which the frame measures vary.
Weak convergence of the normalized measures, together with averaged
localization of the normalized kernels, yields continuous-symbol Szeg\H{o}
limits and sharp asymptotics for the lowest Toeplitz energies.  The results
apply to Bargmann--Fock and weighted Bergman spaces, Gabor and
Paley--Wiener spaces, and orthogonal-polynomial and determinantal ensembles,
including the GUE.
\end{abstract}

\noindent\textbf{Keywords.} Toeplitz operators; reproducing kernel Hilbert
spaces; Parseval frames; uncertainty principles; spectral filling;
semiclassical asymptotics.

\medskip
\noindent\textbf{2020 Mathematics Subject Classification.} Primary 47B35;
Secondary 42C15, 46E22.

\section{Introduction}\label{sec:introduction}

An orthonormal family cannot concentrate as if its members were independent.
In a phase-space representation, the coefficients of the whole family obey a
pointwise exclusion principle.  This observation is the source of the
uncertainty estimates in this paper.

To make the idea precise, let \((X,d)\) be a \(\sigma\)-compact locally
compact metric space, and let \(\mathcal H_\alpha\) be a reproducing kernel
Hilbert space with a normalized continuous Parseval frame
\(\{k_x^\alpha:x\in X\}\) and frame measure \(\nu_\alpha\).  Thus
\(\|k_x^\alpha\|_\alpha=1\) and
\[
    f=\int_X\langle f,k_x^\alpha\rangle_\alpha
                  k_x^\alpha\,d\nu_\alpha(x),
    \qquad f\in\mathcal H_\alpha,
\]
where the integral is weak.  If
\(\varphi_1,\ldots,\varphi_N\) is orthonormal, its occupation density is
\[
    h_{\boldsymbol\varphi}^\alpha(x)
       =\sum_{j=1}^N|\langle\varphi_j,k_x^\alpha\rangle_\alpha|^2.
\]
Parseval's identity and Bessel's inequality give
\[
    0\leq h_{\boldsymbol\varphi}^\alpha\leq1,
    \qquad
    \int_X h_{\boldsymbol\varphi}^\alpha\,d\nu_\alpha=N.
\]
Thus \(N\) orthogonal states occupy \(N\) units of frame measure, but no
point of phase space can be occupied by more than one unit.  Everything that
follows comes from combining this constraint with a cost function.

Hardy's theorem \cite{Hardy} is the classical model: simultaneous
localization in position and frequency is measured against a Gaussian scale,
and the harmonic oscillator turns those two costs into one energy.  Here the
role of the oscillator is played by a Toeplitz form, while the frame measure
itself determines how many low-cost states are available.  This makes the
argument applicable outside Euclidean phase space and, importantly, to
families in which the density of states changes with the parameter.

\subsection{Framework and main results}
\label{subsec:varying-frames}

The overlap measures
\[
    Q_\alpha(x,dy)
       =|\langle\kalpha{x},\kalpha{y}\rangle_\alpha|^2\,d\nualpha(y)
\]
are probability kernels.  For the semiclassical statements, let
\(c(\alpha)\to\infty\) and suppose that the normalized frame measures
\begin{equation}\label{eq:measure-convergence}
    dm_\alpha:=\frac{d\nualpha}{c(\alpha)}
       \rightharpoonup d\nu
\end{equation}
against \(C_c(X)\).  We also assume averaged localization: for every compact
\(K\subset X\) and every \(\eps>0\),
\begin{equation}\label{eq:averaged-localization}
    \int_KQ_\alpha\bigl(x,X\setminus B(x,\eps)\bigr)\,dm_\alpha(x)
       \longrightarrow0.
\end{equation}
These assumptions are weaker than exact proportionality
\(d\nu_\alpha=c(\alpha)d\nu\) and pointwise localization of every
normalized kernel.  The exact estimates below use the actual measure
\(\nu_\alpha\); the limiting measure enters only when \(\alpha\to\infty\).

\paragraph{Exact filling at one parameter.}
Let \(a:X\to[0,\infty)\) be a confining cost.  Write
\[
    F_{\alpha,a}(t)=\nualpha(\{a\leq t\}),\qquad
    F_{\alpha,a}^{\dagger}(s)
       =\inf\{t\geq0:F_{\alpha,a}(t)\geq s\}.
\]
Then every orthonormal family
\(\varphi_1,\ldots,\varphi_N\) satisfies the explicit bound
\[
    \sum_{j=1}^N\int_X a(x)
       |\langle\varphi_j,k_x^\alpha\rangle_\alpha|^2\,d\nu_\alpha(x)
       \geq\int_0^N F_{\alpha,a}^{\dagger}(s)\,ds.
\]
In words, the family must fill the lowest-cost \(N\) units of frame volume.
The same inequality holds for a positive trace-class operator
\(0\leq\gamma\leq I\) of trace \(N\), with the left-hand side replaced by
\[
    \int_Xa(x)\langle\gamma k_x^\alpha,k_x^\alpha\rangle_\alpha
       \,d\nu_\alpha(x).
\]
For a normalized Bessel family with Bessel bound \(B\), the right-hand side
becomes \(B\int_0^{N/B}F_{\alpha,a}^{\dagger}(s)\,ds\).  Thus the Bessel
bound records exactly how much local overlap is allowed.

For a set of finite frame volume, the associated concentration operator has
near-one eigenvectors that form the largest subspace of states concentrated
there.  When the overlap leaking across the boundary is lower order than the
volume, its dimension is asymptotic to that volume.  This is the
Landau--Slepian--Pollak aspect of the same Parseval-frame mechanism.

For a fixed \(e\in X\), let \(V_\alpha(r)=\nu_\alpha(B(e,r))\).  The
phase-space measure of a unit vector \(f\) is
\[
    d\mu_f^\alpha(x)
       =|\langle f,k_x^\alpha\rangle_\alpha|^2\,d\nu_\alpha(x).
\]
If \(\Delta_{\alpha,p}(f)\) denotes its least \(p\)-dispersion and
\(\mu_{\alpha,p}(f)\) a minimizing center, then
\[
    \sum_{j=1}^N
       \bigl[\Delta_{\alpha,p}(\varphi_j)
          +d(e,\mu_{\alpha,p}(\varphi_j))\bigr]^p
       \geq\int_0^N\bigl(V_\alpha^\dagger(s)\bigr)^p\,ds.
\]
This is an exact fixed-parameter uncertainty principle.  It has no
semiclassical hypothesis and no hidden spectral quantity.  The same filling
argument gives the umbrella, rearrangement, and Wasserstein estimates of
Section~\ref{sec:collective-uncertainty}.  In the Gabor model with a Gaussian
window, the case \(p=2\) reads
\[
 \sum_{j=1}^N\int_{\R^{2d}}(|x|^2+|\xi|^2)
          |V_g\varphi_j(x,\xi)|^2\,dx\,d\xi
 \geq\frac{d}{d+1}\frac{(d!)^{1/d}}{\pi}N^{1+1/d},
\]
which is the collective harmonic-oscillator form of Hardy uncertainty.

\paragraph{Semiclassical limits.}
For a bounded real symbol \(\sigma\), set
\[
    T_\sigma^\alpha f
       =\int_X\sigma(x)\langle f,k_x^\alpha\rangle_\alpha
                       k_x^\alpha\,d\nu_\alpha(x).
\]
Under \eqref{eq:measure-convergence}--\eqref{eq:averaged-localization},
if \(\eta\in C_c(X)\), \(\sigma\in C_b(X;\R)\), and \(\psi\) is
continuous on the range of \(\sigma\), then
\[
    \frac1{c(\alpha)}
       \Tr\bigl(T_\eta^\alpha\psi(T_\sigma^\alpha)\bigr)
       \longrightarrow\int_X\eta(x)\psi(\sigma(x))\,d\nu(x).
\]
This is a localized Szeg\H{o} limit; no commutation between the two Toeplitz
operators is required.

The same hypotheses identify the leading energy of the low Toeplitz
eigenstates.  Let \(\lambda_{1,\alpha}(a)\leq
\lambda_{2,\alpha}(a)\leq\cdots\) be the eigenvalues of the Toeplitz form
with confining symbol \(a\).  If \(N_\alpha/c(\alpha)\to\varrho\), then
weak convergence gives the lower bound
\[
    \liminf_{\alpha\to\infty}\frac1{c(\alpha)}
       \sum_{j=1}^{N_\alpha}\lambda_{j,\alpha}(a)
       \geq\int_0^\varrho F_a^\dagger(s)\,ds,
    \qquad F_a(t)=\nu(\{a\leq t\}).
\]
With averaged localization and the kernel-moment tail condition of
Section~\ref{sec:trace-limits}, this is an equality.  Thus the limiting
frame measure determines the leading cost of filling a macroscopic fraction
of the available states.

The exact occupation and concentration estimates are logically separate from
this limit theory: they use only the Parseval-frame identities at one
parameter.  Weak convergence and averaged localization enter only when a
semiclassical limit is asserted; unbounded costs require the additional tail
hypothesis stated later.

\subsection{Position in the literature}

Shapiro's umbrella theorem \cite{Shapiro} gives a qualitative starting
point: an orthonormal sequence cannot share integrable time and frequency
envelopes.  Jaming--Powell \cite{JamingPowell} and Malinnikova
\cite{Malinnikova} obtained quantitative cardinality and
mean-dispersion estimates for orthonormal sequences.  Gr\"ochenig--Zimmermann
\cite{GrochenigZimmermann} proved a Hardy theorem for the short-time Fourier
transform, while Gr\"ochenig--Malinnikova
\cite{GrochenigMalinnikova} established strong phase-space localization
obstructions for Riesz bases.  These results form the direct
time-frequency background for our collective estimates.  For broader
classical and dynamical Hardy theory, see Folland--Sitaram
\cite{FollandSitaram} and Fern\'andez-Bertolin--Malinnikova
\cite{FernandezBertolinMalinnikova}.

The filling principle is close to the distribution--energy inequalities of
Rumin \cite{Rumin}, and hence to the Berezin--Li--Yau and Lieb--Thirring
traditions.  Our contribution is to realize that principle with RKHS frame
measures, identify the corresponding radial Toeplitz spectra, and connect
the result directly to collective phase-space localization.

For the varying-frame trace theory, our point of departure is
Camper--Mitkovski \cite{CamperMitkovskiSzego}.  The present formulation
allows weakly varying diagonal frame measures and averaged, rather than
pointwise, kernel localization; this is what makes the orthogonal-polynomial
application fit the same framework.

\subsection{Applications and organization}

Section~\ref{sec:trace-limits} proves the varying-measure trace theorems,
the general concentration-operator dimension bound, and their unbounded
extensions.  Section~\ref{sec:radial-spectra} proves the exact filling
inequalities and their semiclassical limits.
Section~\ref{sec:collective-uncertainty} derives the mean-dispersion,
umbrella, rearrangement, and transport consequences.

Section~\ref{sec:applications} verifies the hypotheses and evaluates the
resulting costs in Bargmann--Fock, weighted Bergman, Gabor, and
Paley--Wiener spaces.  It then treats OPRL and \(\beta=2\) determinantal
ensembles, including the GUE.  The OPRL example is the main illustration of
why varying frame measures, rather than a fixed reference measure, are needed.

\section{Trace limits for varying frame measures}
\label{sec:trace-limits}

This section establishes the trace input for the uncertainty estimates.
We first record identities valid for each individual frame, and then use
\eqref{eq:measure-convergence} and \eqref{eq:averaged-localization} to pass
to the limit.  Finite total volume gives an unweighted Szeg\H{o} theorem.
On a noncompact space we obtain localized trace limits, followed by a
separate result for unbounded confining symbols.  The latter requires
control of kernel moments at infinity.

We use the notation of Section~\ref{subsec:varying-frames}.  The space
\(X\) is assumed \(\sigma\)-compact as well as locally compact and metric.
Thus the Radon measures are \(\sigma\)-finite, and continuity and the
Parseval identity make the frame spaces separable.  This convention
covers all the applications considered here.  Unless a statement is
explicitly made for a fixed parameter, both
\eqref{eq:measure-convergence} and \eqref{eq:averaged-localization} are
assumed.  We write \(C_b(X)\) for the bounded continuous functions and
\(\|\cdot\|_{\mathfrak S_1}\) for the trace norm.

\subsection{Frame identities and the Berezin transform}

The analysis map
\[
    \mathcal A_\alpha:\Halpha\longrightarrow L^2(X,\nualpha),
    \qquad
    (\mathcal A_\alpha f)(x)=\langle f,\kalpha{x}\rangle_\alpha
\]
is an isometry.  If \(\mathcal P_\alpha=\mathcal A_\alpha\mathcal
A_\alpha^*\) is the orthogonal projection onto its range, then for a
bounded symbol \(\sigma\),
\begin{equation}\label{eq:analysis-compression}
    \Talpha{\sigma}=\mathcal A_\alpha^*M_\sigma\mathcal A_\alpha.
\end{equation}
In particular, the Toeplitz map is positive and unital.

\begin{lemma}\label{lem:frame-calculus}
Fix \(\alpha\).  For a bounded symbol \(\sigma\),
\(\|\Talpha{\sigma}\|\leq\|\sigma\|_\infty\).
For \(\eta\in L^1(X,\nualpha)\), the rank-one integral defining
\(\Talpha{\eta}\) converges in trace norm and
\begin{equation}\label{eq:trace-symbol}
    \|\Talpha{\eta}\|_{\mathfrak S_1}
       \leq\int_X|\eta|\,d\nualpha,\qquad
    \Tr\Talpha{\eta}=\int_X\eta\,d\nualpha.
\end{equation}
More generally, for every bounded operator \(B\) on \(\Halpha\),
\begin{equation}\label{eq:weighted-trace-identity}
    \Tr(\Talpha{\eta}B)
       =\int_X\eta(x)\langle B\kalpha{x},\kalpha{x}\rangle_\alpha
           \,d\nualpha(x).
\end{equation}
The positive measure
\begin{equation}\label{eq:overlap-measure}
    d\Pi_\alpha(x,y):=dm_\alpha(x)\,Q_\alpha(x,dy)
      =\frac{|\langle\kalpha{x},\kalpha{y}\rangle_\alpha|^2}
              {c(\alpha)}\,d\nualpha(x)d\nualpha(y)
\end{equation}
is symmetric, and both its marginals are \(m_\alpha\).
Consequently the Berezin transform is a contraction on
\(L^p(X,m_\alpha)\) for \(1\leq p\leq\infty\), and preserves integrals
of \(L^1(m_\alpha)\) functions.
\end{lemma}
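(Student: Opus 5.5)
The plan is to derive every claim from the isometry of the analysis map \(\mathcal A_\alpha\) and the factorization \eqref{eq:analysis-compression}. Each claim reduces to the Parseval identity \(\int_X|\langle f,\kalpha{x}\rangle_\alpha|^2\,d\nualpha(x)=\|f\|_\alpha^2\) together with \(\|\kalpha{x}\|_\alpha=1\).

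\textbf{Operator norm.} Since \(\mathcal A_\alpha\) is an isometry, \(\|\mathcal A_\alpha^*\|\le1\). Hence
\[
\|\Talpha{\sigma}\|\le\|M_\sigma\|\le\|\sigma\|_\infty.
\]
Positivity and unitality also follow: \(\Talpha{1}=\mathcal A_\alpha^*\mathcal A_\alpha=I\).

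\textbf{Trace-class symbols.} For \(\eta\in L^1(\nualpha)\), I would view \(x\mapsto\eta(x)\,\kalpha{x}\otimes\kalpha{x}\) as a map into \(\mathfrak S_1\).
\begin{itemize}
\item Each rank-one operator has trace norm \(\|\kalpha{x}\|_\alpha^2=1\), so the integrand has \(\mathfrak S_1\)-norm \(|\eta(x)|\), which is integrable.
\item Measurability (indeed continuity on the support) follows from the continuity of \(x\mapsto\kalpha{x}\). Separability of \(\Halpha\) makes \(\mathfrak S_1\) separable, so the integrand is Bochner integrable in \(\mathfrak S_1\).
\item The Bochner integral agrees with the weak integral defining \(\Talpha{\eta}\), because pairing with \(f\otimes g\) is a continuous functional on \(\mathfrak S_1\).
\end{itemize}
The norm bound is then the triangle inequality for Bochner integrals. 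For the trace identity, note that the map \(S\mapsto\Tr(SB)\) is a continuous linear functional on \(\mathfrak S_1\) for bounded \(B\). It therefore commutes with the Bochner integral. Applied to a rank-one term it gives
\[
\Tr\bigl((\kalpha{x}\otimes\kalpha{x})B\bigr)=\langle B\kalpha{x},\kalpha{x}\rangle_\alpha ,
\]
which yields \eqref{eq:weighted-trace-identity}. Taking \(B=I\) gives \(\Tr\Talpha{\eta}=\int\eta\,d\nualpha\).

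\textbf{Overlap measure.} Symmetry of \(\Pi_\alpha\) is immediate from
\[
|\langle\kalpha{x},\kalpha{y}\rangle_\alpha|=|\langle\kalpha{y},\kalpha{x}\rangle_\alpha| .
\]
For the marginals, apply Parseval with \(f=\kalpha{x}\):
\[
\int_X|\langle\kalpha{x},\kalpha{y}\rangle_\alpha|^2\,d\nualpha(y)=\|\kalpha{x}\|_\alpha^2=1.
\]
So \(Q_\alpha(x,\cdot)\) is a probability measure, and the first marginal of \(\Pi_\alpha\) is \(m_\alpha\). By symmetry, so is the second. Tonelli applies because all measures are \(\sigma\)-finite, since \(X\) is \(\sigma\)-compact. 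One should also check joint measurability of the kernel; this follows from continuity of \((x,y)\mapsto\langle\kalpha{x},\kalpha{y}\rangle_\alpha\).

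\textbf{Berezin transform.} The Berezin transform is
\[
\Ba{\sigma}(x)=\langle\Talpha{\sigma}\kalpha{x},\kalpha{x}\rangle_\alpha=\int\sigma(y)\,Q_\alpha(x,dy).
\]
It is a Markov operator with kernel \(Q_\alpha\).
\begin{itemize}
\item \(L^\infty\): contractivity holds because \(Q_\alpha(x,\cdot)\) is a probability measure.
\item \(L^1\): by Tonelli and the second-marginal property, \(\int|\Ba{\sigma}|\,dm_\alpha\le\int|\sigma|\,dm_\alpha\). Fubini, justified by this same bound, gives preservation of integrals.
\item Intermediate \(p\): Jensen's inequality \(|\Ba{\sigma}|^p\le\int|\sigma|^pQ_\alpha(x,dy)\), integrated against \(m_\alpha\), yields the \(L^p\) contraction directly, with no interpolation needed.
\end{itemize}

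\textbf{Main obstacle.} The only genuinely delicate point is the measure-theoretic justification that the weak integral converges in trace norm and that trace commutes with integration when \(\eta\) is merely \(L^1\). The Bochner-integral argument above, using separability and continuity of the frame, is how I would handle it.
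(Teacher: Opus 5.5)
Your proposal is correct and follows essentially the same route as the paper: the compression formula for the norm bound, trace-norm integration of the rank-one operators \(\kalpha{x}\otimes\kalpha{x}\) with continuity of \(S\mapsto\Tr(SB)\) on \(\mathfrak S_1\), Parseval applied to \(\kalpha{x}\) plus symmetry for the marginals, and Jensen, the probability-kernel bound, and Fubini for the Berezin transform. Your Bochner-integral discussion spells out measurability details that the paper leaves implicit; the continuity that matters is that of \(x\mapsto\kalpha{x}\otimes\kalpha{x}\) into \(\mathfrak S_1\), not continuity of the integrand, since \(\eta\) is only integrable.
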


\begin{proof}
The norm bound follows from \eqref{eq:analysis-compression}.  The operator
\(\kalpha{x}\otimes\kalpha{x}\), defined by
\((u\otimes u)f=\langle f,u\rangle_\alpha u\), has trace norm and trace
equal to \(1\).  Integrating it against \(\eta\) proves
\eqref{eq:trace-symbol}; continuity of \(A\mapsto\Tr(AB)\) on
\(\mathfrak S_1\) proves \eqref{eq:weighted-trace-identity}.
Parseval applied to \(\kalpha{x}\) gives \(Q_\alpha(x,X)=1\).
Symmetry of \eqref{eq:overlap-measure} then identifies both marginals.
Jensen's inequality followed by the marginal identity gives the
\(L^p\) contraction for \(p<\infty\); the probability-kernel identity gives
the \(L^\infty\) bound.  Fubini gives preservation of integrals.
\end{proof}

For every positive bounded operator \(B\), Parseval and monotone
convergence also give
\begin{equation}\label{eq:positive-trace-frame}
    \Tr B=\int_X\langle B\kalpha{x},\kalpha{x}\rangle_\alpha
                       \,d\nualpha(x),
\end{equation}
with both sides allowed to be infinite.  Taking \(B=I\) yields
\(\dim\Halpha=\nualpha(X)\).  Thus finite frame mass is equivalent to
finite dimension.

For real bounded \(\sigma\), compression gives the operator inequality
\begin{equation}\label{eq:compression-variance}
\begin{aligned}
    \Talpha{\sigma^2}-(\Talpha{\sigma})^2
       &=\mathcal A_\alpha^*M_\sigma(I-\mathcal P_\alpha)
                    M_\sigma\mathcal A_\alpha\\
       &\geq0.
\end{aligned}
\end{equation}
If also \(\sigma\in L^1(\nualpha)\), all the following traces are finite,
and \eqref{eq:weighted-trace-identity} gives
\begin{equation}\label{eq:dirichlet-trace-defect}
\begin{aligned}
    \mathcal D_\alpha(\sigma)
       &:=\frac1{c(\alpha)}
            \Tr\bigl(\Talpha{\sigma^2}-(\Talpha{\sigma})^2\bigr)\\
       &=\int_X\sigma^2\,dm_\alpha
           -\int_X\sigma\,\Ba{\sigma}\,dm_\alpha\\
       &=\frac12\iint_{X\times X}
                   |\sigma(x)-\sigma(y)|^2\,d\Pi_\alpha(x,y).
\end{aligned}
\end{equation}
This is the normalized trace defect used in the introduction.

\subsection{Concentration operators and effective dimension}
\label{subsec:frame-concentration}

Let \(A\subset X\) be Borel with \(\nualpha(A)<\infty\).  The
positive contraction
\[
    C_{\alpha,A}:=\Talpha{\mathbf1_A}
\]
measures how much of a state is concentrated in \(A\).  Its eigenvectors
with eigenvalues close to one are the natural analogue of the prolate
functions.

\begin{theorem}
\label{thm:frame-concentration}
Fix \(\alpha\), let \(A\subset X\) be Borel with
\(\nualpha(A)<\infty\), and let \(0<\eps<1\).  Then \(C_{\alpha,A}\)
is a positive trace-class contraction and
\[
    \Tr C_{\alpha,A}=\nualpha(A),
    \qquad
    \Tr\bigl(C_{\alpha,A}-C_{\alpha,A}^2\bigr)
       =\int_AQ_\alpha(x,X\setminus A)\,d\nualpha(x).
\]
If
\[
    n_{\alpha,A}(\eps)
       =\#\{j:\lambda_j(C_{\alpha,A})\geq1-\eps\},
\]
where eigenvalues are counted with multiplicity, then
\[
\begin{aligned}
 \nualpha(A)
    -\frac1\eps\int_AQ_\alpha(x,X\setminus A)\,d\nualpha(x)
 &\leq n_{\alpha,A}(\eps)\\
 &\leq \nualpha(A)
    +\frac1{1-\eps}\int_AQ_\alpha(x,X\setminus A)\,d\nualpha(x).
\end{aligned}
\]
Moreover, \(n_{\alpha,A}(\eps)\) is the largest dimension of a
subspace \(E\subset\Halpha\) such that every unit vector \(f\in E\)
satisfies
\[
    \int_A|\langle f,k_x^\alpha\rangle_\alpha|^2\,d\nualpha(x)
       \geq1-\eps.
\]
In particular, every orthonormal family
\(\varphi_1,\ldots,\varphi_N\) for which each member has at least
\(1-\eps\) of its frame mass in \(A\) satisfies
\[
    N(1-\eps)\leq\nualpha(A).
\]
Consequently, for any sequence of finite-measure sets \(A_\alpha\), if
\[
    \int_{A_\alpha}Q_\alpha(x,X\setminus A_\alpha)
          \,d\nu_\alpha(x)
       =o\bigl(\nu_\alpha(A_\alpha)\bigr),
\]
then, for every fixed \(0<\eps<1\),
\[
    n_{\alpha,A_\alpha}(\eps)
       =\nu_\alpha(A_\alpha)+o\bigl(\nu_\alpha(A_\alpha)\bigr)
       \qquad(\alpha\to\infty).
\]
\end{theorem}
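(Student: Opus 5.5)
I would derive everything from Lemma~\ref{lem:frame-calculus}. Taking \(\eta=\mathbf1_A\in L^1(\nualpha)\) there, \(C_{\alpha,A}\) is trace class with \(\Tr C_{\alpha,A}=\nualpha(A)\). It is a positive contraction because \(0\leq\mathbf1_A\leq1\), the Toeplitz map is positive and unital, and \(\|C_{\alpha,A}\|\leq1\). For the defect, I would apply \eqref{eq:weighted-trace-identity} with \(B=C_{\alpha,A}\):
\[
    \Tr C_{\alpha,A}^2=\int_A\langle C_{\alpha,A}k_x^\alpha,k_x^\alpha\rangle_\alpha\,d\nualpha(x)
      =\int_A Q_\alpha(x,A)\,d\nualpha(x).
\]
This uses \(\langle C_{\alpha,A}k_x^\alpha,k_x^\alpha\rangle_\alpha=\int_A|\langle k_x^\alpha,k_y^\alpha\rangle_\alpha|^2d\nualpha(y)\). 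Since \(Q_\alpha(x,X)=1\), subtracting \(\Tr C_{\alpha,A}^2\) from \(\nualpha(A)\) gives the stated identity. Write \(L\) for this leakage integral.

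For the counting bounds, let \(\lambda_j\in[0,1]\) be the eigenvalues; the operator is compact, so the spectrum is discrete. I would use two elementary pointwise inequalities. First, for \(\lambda<1-\eps\) we have \(\lambda-\lambda^2=\lambda(1-\lambda)\geq\eps\lambda\), and trivially \(\lambda\leq1\). Hence
\[
    \nualpha(A)=\sum\lambda_j\leq n_{\alpha,A}(\eps)+\eps^{-1}\sum_{\lambda_j<1-\eps}(\lambda_j-\lambda_j^2)\leq n_{\alpha,A}(\eps)+L/\eps.
\]
Second, for \(\lambda\geq1-\eps\) we have \(1-\lambda\leq\eps\) and \(1-\lambda\leq(\lambda-\lambda^2)/(1-\eps)\), so \(1\leq\lambda+(\lambda-\lambda^2)/(1-\eps)\). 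Summing over the \(n_{\alpha,A}(\eps)\) such eigenvalues, and bounding the remaining ones by zero since all terms are nonnegative, gives \(n_{\alpha,A}(\eps)\leq\nualpha(A)+L/(1-\eps)\).

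For the variational characterization, I would note that \(\int_A|\langle f,k_x^\alpha\rangle_\alpha|^2d\nualpha=\langle C_{\alpha,A}f,f\rangle_\alpha\). The span of the eigenvectors with \(\lambda_j\geq1-\eps\) then qualifies. Conversely, if \(\dim E>n_{\alpha,A}(\eps)\), then \(E\) meets the orthogonal complement of that span nontrivially. On that complement the operator is diagonal with eigenvalues \(<1-\eps\), and these accumulate only at \(0\), so their supremum is strictly below \(1-\eps\). A unit vector in the intersection therefore violates the condition; this is the min--max principle.

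For the orthonormal family, I would not need the subspace statement. Summing the hypothesis gives \(N(1-\eps)\leq\sum_j\langle C_{\alpha,A}\varphi_j,\varphi_j\rangle\leq\Tr C_{\alpha,A}=\nualpha(A)\), where the last step uses positivity of \(C_{\alpha,A}\) and completing \(\{\varphi_j\}\) to an orthonormal basis. Finally, the asymptotic statement follows directly from the two-sided bound with \(\eps\) fixed and \(L=o(\nualpha(A_\alpha))\).

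The only delicate point is the strictness in the max--min step, which is handled by compactness. Everything else is bookkeeping.
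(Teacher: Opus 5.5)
Your proposal is correct and follows the paper's proof essentially step for step. The shared steps are: the trace identities via Lemma~\ref{lem:frame-calculus} and \eqref{eq:weighted-trace-identity} with $B=C_{\alpha,A}$; splitting the eigenvalues at $1-\eps$ using $\lambda\leq\eps^{-1}\lambda(1-\lambda)$ and $1-\lambda\leq(1-\eps)^{-1}\lambda(1-\lambda)$; min--max for the subspace characterization; and bounding the diagonal sum by the trace for orthonormal families. The strictness you flag in the converse step is automatic for every nonzero vector in the spectral subspace of $[0,1-\eps)$, so compactness is not needed there.
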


\begin{proof}
Lemma~\ref{lem:frame-calculus} gives \(0\leq C_{\alpha,A}\leq I\),
trace class, and \(\Tr C_{\alpha,A}=\nualpha(A)\).  Applying
\eqref{eq:weighted-trace-identity} with \(\eta=\mathbf1_A\) and
\(B=C_{\alpha,A}\) gives
\[
    \Tr C_{\alpha,A}^2
       =\int_AQ_\alpha(x,A)\,d\nualpha(x).
\]
The stated trace-defect formula follows because \(Q_\alpha(x,\cdot)\)
is a probability measure.

Write \(\lambda_j=\lambda_j(C_{\alpha,A})\), and separate the
eigenvalues according as \(\lambda_j\geq1-\eps\) or
\(\lambda_j<1-\eps\).  In the latter case,
\(\lambda_j\leq\eps^{-1}\lambda_j(1-\lambda_j)\), whereas in the
former,
\(1-\lambda_j\leq(1-\eps)^{-1}\lambda_j(1-\lambda_j)\).
Summing these two estimates proves the two bounds for
\(n_{\alpha,A}(\eps)\).  The spectral subspace for
\([1-\eps,1]\) has the asserted concentration property; the converse
dimension bound is the min--max principle.  For an orthonormal family,
sum the concentration inequalities and bound the resulting diagonal sum
of \(C_{\alpha,A}\) by its trace.  The last assertion follows immediately
from the two-sided estimate.
\end{proof}

The first trace identity is the familiar capacity estimate for an
orthonormal family.  The additional leakage term records the transition
between concentrated and unconcentrated states.  When it is lower order,
Theorem~\ref{thm:frame-concentration} is the Parseval-frame form of the
Landau--Slepian--Pollak effective-dimension principle
\cite{LandauPollak}.

\subsection{Averaged localization}

The convergence assumptions imply a local oscillation estimate for every
bounded continuous symbol.  Compact support converts it into global
Berezin convergence even when \(m_\alpha(X)=\infty\).

\begin{lemma}
\label{lem:averaged-berezin}
Let \(\sigma\in C_b(X)\), \(1\leq p<\infty\), and \(K\subset X\) be
compact.  Then
\begin{equation}\label{eq:local-oscillation}
    \int_K\int_X|\sigma(y)-\sigma(x)|^p
          \,Q_\alpha(x,dy)\,dm_\alpha(x)\longrightarrow0.
\end{equation}
In particular,
\(\int_K|\Ba{\sigma}-\sigma|^p\,dm_\alpha\to0\).
If \(\sigma\in C_c(X)\), then
\begin{equation}\label{eq:global-berezin-convergence}
    \|\Ba{\sigma}-\sigma\|_{L^p(m_\alpha)}\longrightarrow0.
\end{equation}
For real \(\sigma\in C_c(X)\), also
\(\mathcal D_\alpha(\sigma)\to0\).
\end{lemma}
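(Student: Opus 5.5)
We split the inner integral in \eqref{eq:local-oscillation} into a near part and a far part, using uniform continuity of \(\sigma\) near \(K\) for the first and averaged localization \eqref{eq:averaged-localization} for the second. Fix \(\delta>0\). Since \(X\) is locally compact and \(K\) is compact, choose \(\eps_0>0\) so that the closed \(\eps_0\)-neighbourhood \(K'\) of \(K\) is compact. Then \(\sigma\) is uniformly continuous on \(K'\), so there is \(0<\eps<\eps_0\) with \(|\sigma(y)-\sigma(x)|<\delta\) whenever \(x\in K\) and \(d(x,y)<\eps\). Write \(M=\|\sigma\|_\infty\). For \(x\in K\),
\[
\int_X|\sigma(y)-\sigma(x)|^p\,Q_\alpha(x,dy)\leq\delta^p+(2M)^p\,Q_\alpha\bigl(x,X\setminus B(x,\eps)\bigr),
\]
because \(Q_\alpha(x,\cdot)\) is a probability measure (Lemma~\ref{lem:frame-calculus}). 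Integrating over \(K\) against \(m_\alpha\) gives the bound
\[
\delta^p\,m_\alpha(K)+(2M)^p\int_KQ_\alpha\bigl(x,X\setminus B(x,\eps)\bigr)\,dm_\alpha(x).
\]
The second term tends to zero by \eqref{eq:averaged-localization}. For the first, \(\limsup m_\alpha(K)\leq\nu(K')<\infty\): test \eqref{eq:measure-convergence} against some \(\chi\in C_c(X)\) with \(\mathbf 1_K\leq\chi\leq1\) and \(\supp\chi\subset K'\), which exists by Urysohn. Letting \(\delta\to0\) proves \eqref{eq:local-oscillation}. The pointwise Berezin estimate follows from Jensen, since \(\Ba\sigma(x)-\sigma(x)=\int(\sigma(y)-\sigma(x))\,Q_\alpha(x,dy)\), so \(|\Ba\sigma-\sigma|^p\) is dominated by the inner integral.

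For \(\sigma\in C_c(X)\) with support \(S\), the main point is to control \(\Ba\sigma-\sigma\) off a compact set, where \(\sigma=0\) but \(\Ba\sigma\) need not vanish. Let \(K\) be the closed \(1\)-neighbourhood of \(S\); shrinking the radius if needed, we may assume \(K\) is compact. For \(x\notin K\) we have \(\Ba\sigma(x)=\int_S\sigma\,Q_\alpha(x,dy)\). Jensen and the symmetry of \(\Pi_\alpha\) then give
\[
\int_{X\setminus K}|\Ba\sigma|^p\,dm_\alpha\leq M^p\,\Pi_\alpha\bigl((X\setminus K)\times S\bigr)=M^p\int_SQ_\alpha\bigl(y,X\setminus K\bigr)\,dm_\alpha(y).
\]
This is at most \(M^p\int_SQ_\alpha(y,X\setminus B(y,1))\,dm_\alpha(y)\to0\) by \eqref{eq:averaged-localization} applied to the compact set \(S\). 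Combining this with the local estimate on \(K\) yields \eqref{eq:global-berezin-convergence}.

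Finally, by \eqref{eq:dirichlet-trace-defect},
\[
\mathcal D_\alpha(\sigma)=\tfrac12\iint|\sigma(x)-\sigma(y)|^2\,d\Pi_\alpha.
\]
The integrand vanishes unless \(x\in S\) or \(y\in S\). Using symmetry of \(\Pi_\alpha\), this is bounded by
\[
\int_S\int_X|\sigma(y)-\sigma(x)|^2\,Q_\alpha(x,dy)\,dm_\alpha(x),
\]
which tends to zero by \eqref{eq:local-oscillation} with \(p=2\) and \(K=S\). Here \(\sigma\in L^1(\nu_\alpha)\) holds because \(\nu_\alpha\) is Radon, so the defect is well defined. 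The only delicate step is the tail estimate, which needs the symmetry of \(\Pi_\alpha\) to move the localization hypothesis onto the compact support.
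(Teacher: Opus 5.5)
Your proof is correct and follows the paper's argument. The paper uses the same near/far split, bounds \(m_\alpha(K)\) through a cutoff function and vague convergence, and uses symmetry of \(\Pi_\alpha\) to move everything onto \(\supp\sigma\). The only variation is in \eqref{eq:global-berezin-convergence}. There the paper does not take a compact neighbourhood: it applies the symmetry bound you used for \(\mathcal D_\alpha\), with general \(p\), so the global oscillation integral is at most twice the local integral on \(K=\supp\sigma\), and then uses Jensen. If you keep your route, note that after shrinking the radius to some \(r<1\) the last ball must be \(B(y,r)\), not \(B(y,1)\), since only \(X\setminus K\subset X\setminus B(y,r)\) holds.
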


\begin{proof}
A compactly supported continuous function that is at least \(1\) on \(K\),
together with \eqref{eq:measure-convergence}, bounds \(m_\alpha(K)\)
uniformly for all sufficiently large \(\alpha\).  Set
\[
    \omega_{\sigma,K}(\delta)
      =\sup\{|\sigma(y)-\sigma(x)|:x\in K,\ d(x,y)<\delta\}.
\]
Continuity and compactness imply
\(\omega_{\sigma,K}(\delta)\to0\) as \(\delta\downarrow0\).
The integral in \eqref{eq:local-oscillation} is bounded by
\[
    \omega_{\sigma,K}(\delta)^p m_\alpha(K)
      +(2\|\sigma\|_\infty)^p
        \int_K Q_\alpha(x,X\setminus B(x,\delta))\,dm_\alpha(x).
\]
First let \(\alpha\to\infty\), then \(\delta\downarrow0\).
Jensen's inequality gives the local Berezin assertion.

If \(\sigma\) is compactly supported, take \(K=\supp\sigma\).
The difference \(\sigma(y)-\sigma(x)\) vanishes unless \(x\in K\) or
\(y\in K\).  Symmetry of \(\Pi_\alpha\) therefore bounds its global
\(p\)-th oscillation integral by twice the integral in
\eqref{eq:local-oscillation}.  Jensen gives
\eqref{eq:global-berezin-convergence}, and
\eqref{eq:dirichlet-trace-defect} proves the last assertion.
\end{proof}

\subsection{Berezin--Lieb inequalities and finite total volume}

We recall the finite-mass form of the Berezin--Lieb inequalities
\cite{BerezinSymbols,LiebClassical,CamperMitkovskiSzego}.  This is a
fixed-parameter statement; no convergence hypothesis is used.

\begin{proposition}
\label{prop:berezin-lieb}
Suppose \(\nualpha(X)<\infty\).  Let \(\sigma\) be a bounded real Borel
function taking values in a compact interval \(I\), and let
\(\psi:I\to\R\) be continuous and convex.  Then
\begin{equation}\label{eq:berezin-lieb}
    \int_X\psi(\Ba{\sigma})\,dm_\alpha
       \leq\frac1{c(\alpha)}\Tr\psi(\Talpha{\sigma})
       \leq\int_X\psi(\sigma)\,dm_\alpha.
\end{equation}
\end{proposition}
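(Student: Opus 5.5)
Both inequalities come from Jensen's inequality, applied once in each direction, using the spectral decomposition of \(\Talpha{\sigma}\) together with the frame identities of Lemma~\ref{lem:frame-calculus}.

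\textbf{Setup.} Since \(\nualpha(X)<\infty\), the space \(\Halpha\) is finite dimensional by \eqref{eq:positive-trace-frame}. Hence \(\Talpha{\sigma}\) is a self-adjoint matrix with an orthonormal eigenbasis \(e_1,\dots,e_n\) and eigenvalues \(\lambda_1,\dots,\lambda_n\). The compression formula \eqref{eq:analysis-compression} gives \(\inf\sigma\leq\Talpha{\sigma}\leq\sup\sigma\), so every \(\lambda_i\) lies in \(I\) and \(\psi(\Talpha{\sigma})\) is defined.

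\textbf{Lower bound.} For each \(x\), the numbers \(w_i(x)=|\langle e_i,\kalpha{x}\rangle_\alpha|^2\) form a probability vector, because \(\|\kalpha{x}\|_\alpha=1\). The Berezin transform satisfies
\[
\Ba{\sigma}(x)=\langle\Talpha{\sigma}\kalpha{x},\kalpha{x}\rangle_\alpha=\sum_i w_i(x)\lambda_i .
\]
Jensen's inequality then gives
\[
\psi(\Ba{\sigma}(x))\leq\sum_i w_i(x)\psi(\lambda_i)=\langle\psi(\Talpha{\sigma})\kalpha{x},\kalpha{x}\rangle_\alpha .
\]
Integrate this against \(dm_\alpha\). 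Apply the frame trace identity \eqref{eq:positive-trace-frame}, or \eqref{eq:weighted-trace-identity} with \(\eta=1\in L^1(\nualpha)\), to the bounded operator \(\psi(\Talpha{\sigma})\). This identifies the right-hand side with \(c(\alpha)^{-1}\Tr\psi(\Talpha{\sigma})\).

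\textbf{Upper bound.} Write \(\Tr\psi(\Talpha{\sigma})=\sum_i\psi(\lambda_i)\). Here
\[
\lambda_i=\langle\Talpha{\sigma}e_i,e_i\rangle_\alpha=\int_X\sigma\,d\mu_{e_i}^\alpha ,
\]
and \(\mu_{e_i}^\alpha\) is a probability measure by Parseval. Jensen for this probability measure gives
\[
\psi(\lambda_i)\leq\int_X\psi(\sigma(x))\,|\langle e_i,\kalpha{x}\rangle_\alpha|^2\,d\nualpha(x).
\]
Summing over \(i\) and using \(\sum_i|\langle e_i,\kalpha{x}\rangle_\alpha|^2=\|\kalpha{x}\|_\alpha^2=1\) yields \(\int_X\psi(\sigma)\,d\nualpha\). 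Dividing by \(c(\alpha)\) finishes the proof.

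\textbf{Technical points.} The only things to check are integrability and that Jensen applies. The function \(\psi\) is continuous on the compact interval \(I\), hence bounded. The measures involved are finite, so every integral converges. Measurability of \(x\mapsto\psi(\sigma(x))\) and of \(\psi\circ\Ba{\sigma}\) is clear, since \(\Ba{\sigma}\) is continuous in \(x\) via the frame. The main point is ensuring that \(\sigma\) and \(\Ba{\sigma}\) take values in \(I\), so that \(\psi\) is evaluated in its domain: \(\sigma\) does by hypothesis, and \(\Ba{\sigma}\) is an average of \(\sigma\) with respect to a probability measure, so it does too.
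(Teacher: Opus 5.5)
Your proof is correct and is essentially the paper's argument. It uses finite dimensionality, an eigenbasis of \(\Talpha{\sigma}\), and Jensen for the weights \(|\langle e_j,\kalpha{x}\rangle_\alpha|^2\) at each \(x\) for the lower bound, then Jensen for each probability measure \(\mu_{e_j}^\alpha\) followed by summation over \(j\) for the upper bound. One small point: \eqref{eq:positive-trace-frame} is stated only for positive operators, so for the possibly non-positive \(\psi(\Talpha{\sigma})\) the identity to use is \eqref{eq:weighted-trace-identity} with \(\eta=1\), as you also note, or simply Parseval in the eigenbasis, as the paper does.
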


\begin{proof}
The space is finite-dimensional.  Choose an orthonormal eigenbasis \(e_j\)
of \(\Talpha{\sigma}\), with eigenvalues \(\lambda_j\in I\).
For each \(x\), the numbers
\(|\langle e_j,\kalpha{x}\rangle_\alpha|^2\) sum to \(1\), and
\[
    \Ba{\sigma}(x)
       =\sum_j\lambda_j|\langle e_j,\kalpha{x}\rangle_\alpha|^2.
\]
Jensen, integration, and Parseval prove the left inequality.
For each \(j\), the measure
\(|\langle e_j,\kalpha{x}\rangle_\alpha|^2\,d\nualpha(x)\)
is a probability measure, with \(\sigma\)-average \(\lambda_j\).
Applying Jensen to it and summing over \(j\) proves the right inequality.
\end{proof}

\begin{theorem}
\label{thm:finite-volume-szego}
Suppose \(\nualpha(X)<\infty\) for every \(\alpha\), and, in addition to
\eqref{eq:measure-convergence} and \eqref{eq:averaged-localization}, assume
\begin{equation}\label{eq:total-mass-convergence}
    m_\alpha(X)\longrightarrow\nu(X)<\infty.
\end{equation}
Let \(\sigma\in C_b(X;\R)\), and let \(I\) be a compact interval
containing its range.  For every \(\psi\in C(I)\),
\begin{equation}\label{eq:finite-volume-szego}
    \frac1{c(\alpha)}\Tr\psi(\Talpha{\sigma})
       \longrightarrow\int_X\psi(\sigma)\,d\nu.
\end{equation}
In particular, this holds on compact \(X\) under the two standing
convergence assumptions alone.
\end{theorem}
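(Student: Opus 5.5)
I will squeeze \(\frac1{c(\alpha)}\Tr\psi(\Talpha{\sigma})\) between the two sides of the Berezin--Lieb inequalities (Proposition~\ref{prop:berezin-lieb}) and pass to the limit. The plan has three steps: treat convex \(\psi\), extend to all of \(C(I)\) by density, and dispose of the compact case.

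First I upgrade the weak convergence \eqref{eq:measure-convergence} to convergence against \(C_b(X)\). Since \(m_\alpha(X)\to\nu(X)<\infty\), the family \(m_\alpha\) is tight. To see this, fix \(\delta>0\) and pick \(\chi\in C_c(X)\) with \(0\le\chi\le1\) and \(\int\chi\,d\nu>\nu(X)-\delta\). Then
\[
m_\alpha(X\setminus\{\chi=1\})\le m_\alpha(X)-\int\chi\,dm_\alpha\to\nu(X)-\int\chi\,d\nu<\delta .
\]
Now let \(g\in C_b(X)\). Write \(g=g\chi+g(1-\chi)\). The first term converges because \(g\chi\in C_c(X)\), and the second is bounded by \(\|g\|_\infty\bigl(m_\alpha(X)-\int\chi\,dm_\alpha\bigr)\), which is eventually at most \(\|g\|_\infty\delta\). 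Hence \(\int g\,dm_\alpha\to\int g\,d\nu\) for every \(g\in C_b(X)\). In particular \(\int\psi(\sigma)\,dm_\alpha\to\int\psi(\sigma)\,d\nu\) for any \(\psi\in C(I)\), since \(\psi\circ\sigma\in C_b(X)\).

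Second, I need the lower Berezin side: \(\int\psi(\Ba{\sigma})\,dm_\alpha-\int\psi(\sigma)\,dm_\alpha\to0\). This is the main obstacle, because Lemma~\ref{lem:averaged-berezin} gives global \(L^p\) convergence of \(\Ba{\sigma}\) only for compactly supported \(\sigma\), and here \(\sigma\) is merely bounded. I handle it with the same tightness cutoff. Note \(\Ba{\sigma}\) also takes values in \(I\), since it is a \(Q_\alpha\)-average of \(\sigma\). Choose \(K=\{\chi=1\}\), which is compact as a closed subset of \(\supp\chi\). Then
\[
\int_X|\psi(\Ba{\sigma})-\psi(\sigma)|\,dm_\alpha
\le \int_K|\psi(\Ba{\sigma})-\psi(\sigma)|\,dm_\alpha+2\|\psi\|_\infty\, m_\alpha(X\setminus K).
\]
The second term is eventually below \(2\|\psi\|_\infty\delta\). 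For the first term I use uniform continuity of \(\psi\) on \(I\) with modulus \(\omega_\psi\). By Chebyshev and the local assertion of Lemma~\ref{lem:averaged-berezin} with \(p=1\), the set of \(x\in K\) where \(|\Ba{\sigma}-\sigma|>\eta\) has \(m_\alpha\)-measure tending to zero. The first term is therefore at most \(\omega_\psi(\eta)\sup_\alpha m_\alpha(K)+o(1)\). Letting \(\alpha\to\infty\), then \(\eta\to0\), then \(\delta\to0\), gives the claim. For convex \(\psi\), \eqref{eq:berezin-lieb} now pinches \(\frac1{c(\alpha)}\Tr\psi(\Talpha{\sigma})\) between two quantities tending to \(\int\psi(\sigma)\,d\nu\), which proves \eqref{eq:finite-volume-szego}.

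Third, I extend to general \(\psi\in C(I)\).
\begin{itemize}
\item Every polynomial is a difference of two convex polynomials on \(I\): add \(M t^2\) with \(M\) large. By linearity, \eqref{eq:finite-volume-szego} therefore holds for polynomials.
\item By Weierstrass, for any \(\psi\in C(I)\) and \(\varepsilon>0\) there is a polynomial \(q\) with \(\|\psi-q\|_{C(I)}<\varepsilon\).
\item Since the spectrum of \(\Talpha{\sigma}\) lies in \(I\), \(\bigl|\Tr\psi(\Talpha{\sigma})-\Tr q(\Talpha{\sigma})\bigr|\le\varepsilon\dim\Halpha=\varepsilon\,\nualpha(X)\), by \eqref{eq:positive-trace-frame}.
\item After dividing by \(c(\alpha)\), this error is \(\varepsilon\, m_\alpha(X)\), which is bounded by a constant times \(\varepsilon\). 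The limiting side likewise changes by at most \(\varepsilon\,\nu(X)\).
\end{itemize}
Letting \(\varepsilon\to0\) completes the argument. For compact \(X\), the constant \(1\) lies in \(C_c(X)\), so \eqref{eq:total-mass-convergence} follows from \eqref{eq:measure-convergence}, and the result applies with only the two standing assumptions.
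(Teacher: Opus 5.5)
Your route is the paper's. You upgrade vague convergence to convergence against $C_b(X)$ using $m_\alpha(X)\to\nu(X)$, and you combine the local estimate of Lemma~\ref{lem:averaged-berezin} with a cutoff to get $\int_X|\psi(\Ba{\sigma})-\psi(\sigma)|\,dm_\alpha\to0$. You then squeeze convex $\psi$ by Berezin--Lieb and pass to general $\psi$ with the trace error $\eps\, m_\alpha(X)$. The only difference is the density step: you write polynomials as $(q+Mt^2)-Mt^2$, while the paper uses piecewise linear functions. Both are valid decompositions into differences of convex functions.

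There is one false inequality. The bound $m_\alpha(X\setminus\{\chi=1\})\le m_\alpha(X)-\int\chi\,dm_\alpha$ fails, because $1-\chi$ is strictly less than $1$ on $\{0<\chi<1\}$. Nothing in your choice of $\chi$ rules out $\max\chi<1$, which is compatible with $\int\chi\,d\nu>\nu(X)-\delta$. In that case $K=\{\chi=1\}=\varnothing$, and the tail term $2\|\psi\|_\infty m_\alpha(X\setminus K)$ in your second step is not small. Your $C_b$ argument is unaffected, since there you correctly bound $\int g(1-\chi)\,dm_\alpha$ directly.

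The repair is immediate. Take $K=\supp\chi$, which is compact and satisfies $\mathbf1_{X\setminus K}\le1-\chi$, so $\limsup_\alpha m_\alpha(X\setminus K)\le\nu(X)-\int\chi\,d\nu<\delta$. Alternatively, as the paper does, split the integrand with the weights $\chi$ and $1-\chi$ and apply Lemma~\ref{lem:averaged-berezin} on $\supp\chi$. With that change your proof is complete.
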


\begin{proof}
Vague convergence and \eqref{eq:total-mass-convergence} imply asymptotic
tightness and convergence against \(C_b(X)\).  Indeed, choose
\(0\leq\chi\leq1\) in \(C_c(X)\) with
\(\int_X(1-\chi)\,d\nu\) small, and use
\[
    \int_X(1-\chi)\,dm_\alpha
       =m_\alpha(X)-\int_X\chi\,dm_\alpha
       \longrightarrow\int_X(1-\chi)\,d\nu.
\]
Lemma~\ref{lem:averaged-berezin} on \(\supp\chi\), together with
\(|\Ba{\sigma}-\sigma|\leq2\|\sigma\|_\infty\), now gives
\(\|\Ba{\sigma}-\sigma\|_{L^1(m_\alpha)}\to0\).
Uniform continuity of \(\psi\) and boundedness of \(m_\alpha(X)\) imply
\[
    \int_X\bigl|\psi(\Ba{\sigma})-\psi(\sigma)\bigr|\,dm_\alpha
       \longrightarrow0.
\]
For convex \(\psi\), \eqref{eq:berezin-lieb} and convergence against
\(C_b(X)\) prove \eqref{eq:finite-volume-szego}.
Every real continuous function on \(I\) is uniformly approximable by
piecewise linear functions, each a difference of two continuous convex
functions.  The normalized trace error of an approximation of size
\(\eps\) is at most
\(\eps\,\dim\Halpha/c(\alpha)=\eps m_\alpha(X)\).
This proves the result for real \(\psi\), and then for complex \(\psi\)
by linearity.  On compact \(X\), the test function \(1\) gives
\eqref{eq:total-mass-convergence}.
\end{proof}

The mass condition matters on noncompact spaces: vague convergence alone
allows normalized dimension to escape to infinity.  Theorem
\ref{thm:finite-volume-szego} therefore applies both to compact phase
spaces and to finite-dimensional polynomial ensembles on \(\R\) whose
normalized diagonal measures converge weakly as probabilities.

\subsection{Localized traces without commutation}

A compactly supported weight removes the need for finite total frame
mass.  We use the small spectral variance of a coherent state to pass
from the symbol to continuous functions of its Toeplitz operator.

\begin{theorem}
\label{thm:localized-szego}
Let \(\sigma\in C_b(X;\R)\), let \(I\) be a compact interval containing
its range, and let \(\psi\in C(I)\).  For every compact \(K\subset X\),
\begin{equation}\label{eq:localized-functional-calculus}
    \int_K
      \left|\langle\psi(\Talpha{\sigma})\kalpha{x},\kalpha{x}
                    \rangle_\alpha-\psi(\sigma(x))\right|
                  \,dm_\alpha(x)\longrightarrow0.
\end{equation}
Consequently, for every \(\eta\in C_c(X)\),
\begin{equation}\label{eq:localized-szego}
    \frac1{c(\alpha)}
       \Tr\bigl(\Talpha{\eta}\psi(\Talpha{\sigma})\bigr)
       \longrightarrow\int_X\eta(x)\psi(\sigma(x))\,d\nu(x).
\end{equation}
The operators \(\Talpha{\eta}\) and \(\Talpha{\sigma}\) need not commute.
\end{theorem}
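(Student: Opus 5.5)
We first prove \eqref{eq:localized-functional-calculus} for polynomials \(\psi\), then pass to general \(\psi\in C(I)\) by uniform approximation, and finally deduce \eqref{eq:localized-szego} from \eqref{eq:weighted-trace-identity}.

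\emph{Spectral variance of a coherent state.} Fix \(x\) and let \(\mu_x\) be the spectral measure of \(\Talpha{\sigma}\) in the unit vector \(\kalpha{x}\). Its mean is \(\Ba{\sigma}(x)\) and it is supported in \(I\). Its variance is
\[
\langle(\Talpha{\sigma})^2\kalpha{x},\kalpha{x}\rangle_\alpha-\Ba{\sigma}(x)^2
\leq\Ba{\sigma^2}(x)-\Ba{\sigma}(x)^2,
\]
by \eqref{eq:compression-variance}, and the right-hand side equals \(\int_X|\sigma(y)-\Ba{\sigma}(x)|^2Q_\alpha(x,dy)\). Hence
\[
\int_X|t-\sigma(x)|^2\,d\mu_x(t)
\leq 2\int_X|\sigma(y)-\sigma(x)|^2Q_\alpha(x,dy)+2|\Ba{\sigma}(x)-\sigma(x)|^2 .
\]
Integrating over \(K\) against \(m_\alpha\), both terms tend to zero by Lemma~\ref{lem:averaged-berezin} with \(p=2\). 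So \(\int_K\int|t-\sigma(x)|^2d\mu_x(t)\,dm_\alpha(x)\to0\).

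\emph{Passing to continuous \(\psi\).} Note that \(\langle\psi(\Talpha{\sigma})\kalpha{x},\kalpha{x}\rangle_\alpha=\int\psi\,d\mu_x\), so the integrand in \eqref{eq:localized-functional-calculus} is at most \(\int|\psi(t)-\psi(\sigma(x))|\,d\mu_x(t)\). Given \(\eps>0\), uniform continuity of \(\psi\) on \(I\) gives \(\delta\) with \(|\psi(t)-\psi(s)|\leq\eps+2\|\psi\|_\infty\delta^{-2}|t-s|^2\) for \(s,t\in I\). Integrating over \(K\) gives the bound
\[
\eps\, m_\alpha(K)+2\|\psi\|_\infty\delta^{-2}\int_K\int|t-\sigma(x)|^2d\mu_x(t)\,dm_\alpha(x).
\]
Here \(m_\alpha(K)\) is bounded for large \(\alpha\) by \eqref{eq:measure-convergence}, exactly as in the proof of Lemma~\ref{lem:averaged-berezin}. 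Letting \(\alpha\to\infty\) and then \(\eps\to0\) proves \eqref{eq:localized-functional-calculus}. This Chebyshev-type argument avoids polynomial approximation altogether, and it handles complex \(\psi\) directly.

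\emph{The Szeg\H{o} limit.} By \eqref{eq:weighted-trace-identity} with \(B=\psi(\Talpha{\sigma})\), which is bounded,
\[
\frac1{c(\alpha)}\Tr\bigl(\Talpha{\eta}\psi(\Talpha{\sigma})\bigr)=\int_X\eta(x)\langle\psi(\Talpha{\sigma})\kalpha{x},\kalpha{x}\rangle_\alpha\,dm_\alpha(x).
\]
Take \(K=\supp\eta\). By \eqref{eq:localized-functional-calculus} this differs by \(o(1)\) from \(\int\eta\,\psi\circ\sigma\,dm_\alpha\). Since \(\eta\,\psi\circ\sigma\in C_c(X)\), this last integral converges to \(\int\eta\,\psi(\sigma)\,d\nu\) by \eqref{eq:measure-convergence}. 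No commutation is used anywhere.

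The main obstacle is the variance step: recognizing that the operator inequality \eqref{eq:compression-variance} controls the spectral spread of \(\kalpha{x}\) by the Berezin oscillation, which is precisely what averaged localization makes small on compacts.
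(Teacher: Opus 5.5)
Your proof is correct and is essentially the paper's argument. You control the spectral second moment of \(\kalpha{x}\) about \(\sigma(x)\) through the compression inequality \eqref{eq:compression-variance}, turn it into a Chebyshev-type bound via the modulus of continuity of \(\psi\), integrate over \(K\) using Lemma~\ref{lem:averaged-berezin}, and conclude with \eqref{eq:weighted-trace-identity} and vague convergence.

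The differences are cosmetic. The paper applies the compression inequality directly to \(\sigma-\sigma(x)\), using that the Toeplitz map is unital, and so obtains \(\|(\Talpha{\sigma}-\sigma(x)I)\kalpha{x}\|_\alpha^2\leq\int_X|\sigma(y)-\sigma(x)|^2Q_\alpha(x,dy)\) without your detour through the Berezin mean. Also, the polynomial-approximation step announced in your opening sentence is never used.
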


\begin{proof}
Write \(A=\Talpha{\sigma}\).  By \eqref{eq:compression-variance},
\[
\begin{aligned}
    v_\alpha(x)
       &:=\|(A-\sigma(x)I)\kalpha{x}\|_\alpha^2\\
       &\leq\int_X|\sigma(y)-\sigma(x)|^2\,Q_\alpha(x,dy).
\end{aligned}
\]
Let \(\omega_\psi(\delta)\) be the modulus of continuity of \(\psi\)
on \(I\).  The spectral probability measure of \(A\) in the vector
\(\kalpha{x}\) assigns mass at most \(v_\alpha(x)/\delta^2\) to
\(\{t:|t-\sigma(x)|>\delta\}\).  Hence
\[
    \left|\langle\psi(A)\kalpha{x},\kalpha{x}\rangle_\alpha
                      -\psi(\sigma(x))\right|
       \leq\omega_\psi(\delta)
          +\frac{2\|\psi\|_\infty}{\delta^2}\,v_\alpha(x).
\]
Integrate over \(K\).  Its \(m_\alpha\)-mass stays bounded, while
Lemma~\ref{lem:averaged-berezin} gives
\(\int_Kv_\alpha\,dm_\alpha\to0\).
Letting first \(\alpha\to\infty\), then \(\delta\downarrow0\), proves
\eqref{eq:localized-functional-calculus}.
Formula \eqref{eq:weighted-trace-identity} identifies the left-hand side
of \eqref{eq:localized-szego} with the integral of
\(\eta(x)\langle\psi(A)\kalpha{x},\kalpha{x}\rangle_\alpha\)
against \(m_\alpha\).  The preceding convergence and vague convergence
applied to \(\eta\psi(\sigma)\in C_c(X)\) finish the proof.
\end{proof}

This proof gives a localized trace limit for continuous symbols without
the commutation hypothesis used in the weighted Berezin--Lieb argument
of \cite{CamperMitkovskiSzego}.  It also gives an unweighted trace-class
specialization on noncompact spaces.

\begin{corollary}\label{cor:trace-class-szego}
Let \(\sigma\in C_c(X;\R)\), and let \(I\) be a compact interval
containing \(0\) and the range of \(\sigma\).  Suppose
\(\psi(t)=t\,g(t)\) on \(I\), with \(g\in C(I)\).  Then
\(\psi(\Talpha{\sigma})\) is trace class and
\[
    \frac1{c(\alpha)}\Tr\psi(\Talpha{\sigma})
       \longrightarrow\int_X\psi(\sigma)\,d\nu.
\]
\end{corollary}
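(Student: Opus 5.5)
The idea is to write \(\psi(\Talpha{\sigma})=\Talpha{\sigma}\,g(\Talpha{\sigma})\) and treat \(\Talpha{\sigma}\) as the trace-class factor \(\Talpha{\eta}\) in Theorem~\ref{thm:localized-szego}, with \(\eta=\sigma\).

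\textbf{Trace class and the trace formula.} Since \(\sigma\in C_c(X)\) is bounded with compact support, and \(\nualpha\) is Radon, we have \(\sigma\in L^1(\nualpha)\). Lemma~\ref{lem:frame-calculus} then gives that \(\Talpha{\sigma}\) is trace class. The operator \(g(\Talpha{\sigma})\) is bounded, because the spectrum of \(\Talpha{\sigma}\) lies in \(I\): this follows from positivity and unitality of the Toeplitz map together with the fact that \(\sigma\) takes values in \(I\). Hence
\[
\psi(\Talpha{\sigma})=\Talpha{\sigma}\,g(\Talpha{\sigma})
\]
is trace class. Here \(\psi(t)=t\,g(t)\) is used in the continuous functional calculus, and since \(0\in I\) the identity holds on the whole spectrum, including the point \(0\) whenever it lies in the spectrum.

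Applying \eqref{eq:weighted-trace-identity} with \(\eta=\sigma\) and \(B=g(\Talpha{\sigma})\) gives
\[
\frac1{c(\alpha)}\Tr\psi(\Talpha{\sigma})
=\int_X\sigma(x)\,\langle g(\Talpha{\sigma})\kalpha{x},\kalpha{x}\rangle_\alpha\,dm_\alpha(x).
\]

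\textbf{Passing to the limit.} Theorem~\ref{thm:localized-szego} applies with weight \(\eta=\sigma\in C_c(X)\), symbol \(\sigma\in C_b(X;\R)\), and \(g\in C(I)\) in place of \(\psi\). It yields
\[
\frac1{c(\alpha)}\Tr\psi(\Talpha{\sigma})\longrightarrow\int_X\sigma\,g(\sigma)\,d\nu=\int_X\psi(\sigma)\,d\nu.
\]
The theorem expressly does not require the weight and symbol operators to commute, although here they trivially do.

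\textbf{Main point to check.} The only delicate step is the factorization \(\psi(\Talpha{\sigma})=\Talpha{\sigma}\,g(\Talpha{\sigma})\). On a noncompact space, \(0\) may lie in the essential spectrum of \(\Talpha{\sigma}\), so \(\psi(0)=0\) is exactly what makes \(\psi(\Talpha{\sigma})\) trace class. The hypothesis \(0\in I\) ensures that \(g\) is defined there. No approximation argument is needed.
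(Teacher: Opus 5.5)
Your proposal is correct and follows the paper's proof exactly: the factorization $\psi(\Talpha{\sigma})=\Talpha{\sigma}\,g(\Talpha{\sigma})$, trace class from Lemma~\ref{lem:frame-calculus}, and Theorem~\ref{thm:localized-szego} applied with weight $\eta=\sigma$ and function $g$. You also spell out the spectral inclusion $\operatorname{spec}\Talpha{\sigma}\subset I$, which the paper leaves implicit.
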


\begin{proof}
Lemma~\ref{lem:frame-calculus} makes \(\Talpha{\sigma}\) trace class, and
\(\psi(\Talpha{\sigma})=\Talpha{\sigma}g(\Talpha{\sigma})\).
Apply Theorem~\ref{thm:localized-szego} with weight \(\eta=\sigma\)
and function \(g\).
\end{proof}

The condition at zero ensures trace-class functional calculus even if
\(\Halpha\) is infinite-dimensional.  It includes powers \(t^k\),
\(k\geq1\), and functions \((t-s)_+\) with \(s>0\).

\subsection{Which measurable symbols are allowed?}

Weak convergence controls continuous test functions.  The following
criterion records the additional approximation needed to retain rough
symbols.

\begin{corollary}
\label{cor:rough-symbols}
Assume the hypotheses of Theorem~\ref{thm:finite-volume-szego}.
Let \(\sigma\) be a bounded real Borel function.  Suppose there are real
\(\sigma_j\in C_b(X)\), uniformly bounded together with \(\sigma\), such that
\begin{equation}\label{eq:rough-symbol-approximation}
    \lim_{j\to\infty}
      \left(\limsup_{\alpha\to\infty}
                \|\sigma-\sigma_j\|_{L^2(m_\alpha)}
                +\|\sigma-\sigma_j\|_{L^2(\nu)}\right)=0.
\end{equation}
Then \eqref{eq:finite-volume-szego} holds for \(\sigma\) and every
continuous \(\psi\) on a compact interval containing all these ranges.
\end{corollary}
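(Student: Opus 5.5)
The plan is an approximation argument: first reduce to Lipschitz (or polynomial) \(\psi\), then compare \(\sigma\) with \(\sigma_j\) both at the operator level and at the level of the limiting integrals. Theorem~\ref{thm:finite-volume-szego} already gives \eqref{eq:finite-volume-szego} for each continuous \(\sigma_j\). It therefore suffices to show
\[
    \limsup_{\alpha\to\infty}\frac1{c(\alpha)}
       \bigl|\Tr\psi(\Talpha{\sigma})-\Tr\psi(\Talpha{\sigma_j})\bigr|
       \quad\text{and}\quad
    \Bigl|\int_X\psi(\sigma)\,d\nu-\int_X\psi(\sigma_j)\,d\nu\Bigr|
\]
both tend to \(0\) as \(j\to\infty\).

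Since the normalized dimension \(\dim\Halpha/c(\alpha)=m_\alpha(X)\) is bounded, and \(\nu(X)<\infty\), a uniform approximation of \(\psi\) within \(\eps\) changes both sides by at most \(\eps\sup_\alpha m_\alpha(X)+\eps\nu(X)\). So we may take \(\psi\) Lipschitz on the common compact interval \(J\), or even a polynomial by Weierstrass.

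For the limiting integral, the Lipschitz bound and Cauchy--Schwarz give
\[
    \int_X|\psi(\sigma)-\psi(\sigma_j)|\,d\nu\leq\Lip(\psi)\,\nu(X)^{1/2}\|\sigma-\sigma_j\|_{L^2(\nu)},
\]
and this tends to \(0\) by \eqref{eq:rough-symbol-approximation}.

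For the trace, the key input is a trace-norm, or Hilbert--Schmidt, estimate of \(\Talpha{\sigma}-\Talpha{\sigma_j}=\Talpha{\sigma-\sigma_j}\). By Lemma~\ref{lem:frame-calculus}, \(\|\Talpha{\tau}\|_{\mathfrak S_1}\leq\int|\tau|\,d\nualpha\), and hence
\[
    \frac1{c(\alpha)}\|\Talpha{\sigma-\sigma_j}\|_{\mathfrak S_1}
       \leq\|\sigma-\sigma_j\|_{L^1(m_\alpha)}
       \leq m_\alpha(X)^{1/2}\|\sigma-\sigma_j\|_{L^2(m_\alpha)}.
\]
We then need a Lipschitz-in-trace-norm property for \(\psi\): \(\|\psi(A)-\psi(B)\|_{\mathfrak S_1}\leq C_\psi\|A-B\|_{\mathfrak S_1}\) for self-adjoint \(A,B\) with spectra in \(J\). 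Merely Lipschitz \(\psi\) does not give this in general. For polynomials it is immediate from the telescoping identity \(A^k-B^k=\sum_{i}A^i(A-B)B^{k-1-i}\), which gives \(C_\psi=\sum_k k|c_k|M^{k-1}\) with \(M=\max_{t\in J}|t|\). This is why we reduce to polynomial \(\psi\).

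Alternatively, the trace of a difference needs only a scalar bound: for a \(C^1\) function, \(|\Tr\psi(A)-\Tr\psi(B)|\leq\|\psi'\|_\infty\|A-B\|_{\mathfrak S_1}\), by the finite-dimensional trace formula \(\frac{d}{dt}\Tr\psi(B+t(A-B))=\Tr\bigl(\psi'(\cdot)(A-B)\bigr)\). Either route works, because the spaces are finite-dimensional.

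Putting the pieces together: fix \(\eps\), choose a polynomial \(p\) with \(\|\psi-p\|_{\infty,J}<\eps\), and bound the full error by
\[
    \eps\bigl(\sup_\alpha m_\alpha(X)+\nu(X)\bigr)
      +C_p\sup_\alpha m_\alpha(X)^{1/2}\limsup_\alpha\|\sigma-\sigma_j\|_{L^2(m_\alpha)}
      +\bigl|\text{Szeg\H{o} error for }\sigma_j\bigr|
      +C_p\,\nu(X)^{1/2}\|\sigma-\sigma_j\|_{L^2(\nu)}.
\]
Here \(C_p\) denotes the Lipschitz or trace-perturbation constant of \(p\) on \(J\), so it dominates \(\Lip(p)\) in the limiting-integral term. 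Let \(\alpha\to\infty\) first, which removes the Szeg\H{o} error by Theorem~\ref{thm:finite-volume-szego}. Then let \(j\to\infty\), using \eqref{eq:rough-symbol-approximation}, and finally \(\eps\to0\).

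I expect the main obstacle to be the operator-Lipschitz step. It has to be handled through polynomial approximation, or through the derivative trace formula, because a general continuous \(\psi\) is not trace-norm Lipschitz. The remaining point to check is boundedness of \(m_\alpha(X)\), which follows from \eqref{eq:total-mass-convergence}.
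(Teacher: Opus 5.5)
Your proof is correct, and it takes a different route from the paper's.

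\textbf{The paper's route.} The paper never compares operators. It works at the level of the Berezin transform. The \(L^2(m_\alpha)\)-contraction property gives
\(\|\Ba{\sigma}-\sigma\|_{L^2(m_\alpha)}\le 2\|\sigma-\sigma_j\|_{L^2(m_\alpha)}+\|\widetilde{\sigma_j}^{\,\alpha}-\sigma_j\|_{L^2(m_\alpha)}\), and the right side can be made small. Passing through \(\sigma_j\) then gives \(\int\psi(\sigma)\,dm_\alpha\to\int\psi(\sigma)\,d\nu\). Finally the paper reruns the Berezin--Lieb sandwich \eqref{eq:berezin-lieb} for convex \(\psi\), followed by the difference-of-convex approximation.

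\textbf{Your route.} You use Theorem~\ref{thm:finite-volume-szego} only as a black box for the continuous \(\sigma_j\). You transfer it to \(\sigma\) by a stability estimate with two ingredients: the \(L^1(\nualpha)\to\mathfrak S_1\) bound of Lemma~\ref{lem:frame-calculus} applied to \(\Talpha{\sigma-\sigma_j}\), and a trace-perturbation inequality for polynomial \(\psi\).

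\textbf{What each buys.} Your argument needs neither the Berezin transform nor convexity. It shows, modularly, that the class of bounded symbols obeying \eqref{eq:finite-volume-szego} is closed under the approximation \eqref{eq:rough-symbol-approximation}. The paper's argument is shorter given the machinery already in place, and it yields a structural by-product: Berezin convergence \(\|\Ba{\sigma}-\sigma\|_{L^2(m_\alpha)}\to0\) for the rough symbol itself. This in turn gives, for example, \(\mathcal D_\alpha(\sigma)\to0\).

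\textbf{Two small remarks.} First, your caution about operator-Lipschitz functions is justified for \(\|\psi(A)-\psi(B)\|_{\mathfrak S_1}\). For the trace alone, however, Lipschitz \(\psi\) already suffices: in finite dimensions, \(\sum_i|\lambda_i^\downarrow(A)-\lambda_i^\downarrow(B)|\le\|A-B\|_{\mathfrak S_1}\) (Lidskii--Mirsky). So the polynomial reduction is convenient but not necessary. Second, \(\sup_\alpha m_\alpha(X)\) should be read as an eventual bound coming from \eqref{eq:total-mass-convergence}. That is all the \(\limsup\) in your final estimate requires.
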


\begin{proof}
The proof of Theorem~\ref{thm:finite-volume-szego} and boundedness of the
symbols give
\(\|\widetilde{\sigma_j}^{\,\alpha}-\sigma_j\|_{L^2(m_\alpha)}\to0\)
for each \(j\).  The contraction property implies
\[
    \|\Ba{\sigma}-\sigma\|_{L^2(m_\alpha)}
       \leq2\|\sigma-\sigma_j\|_{L^2(m_\alpha)}
          +\|\widetilde{\sigma_j}^{\,\alpha}-\sigma_j\|_{L^2(m_\alpha)}.
\]
Thus the left-hand side tends to zero.  Uniform continuity of \(\psi\),
bounded total masses, and \eqref{eq:rough-symbol-approximation} also give
\(\int\psi(\sigma)\,dm_\alpha\to\int\psi(\sigma)\,d\nu\),
by first using \(\sigma_j\) and then letting \(j\to\infty\).
The Berezin--Lieb proof now applies unchanged.
\end{proof}

When \(m_\alpha=\nu\) is finite, density of continuous functions in
\(L^2(\nu)\), followed by truncation, supplies
\eqref{eq:rough-symbol-approximation} for every bounded real Borel symbol.
Thus the earlier bounded-measurable result is recovered in that case.

\begin{remark}\label{rem:rough-symbol-counterexample}
The approximation condition cannot be omitted.  On \(X=[0,1]\), take
\(\mathcal H_n=\mathcal P_{n-1}\) with inner product
\(\langle f,g\rangle=\sum_{j=1}^n f(j/n)\overline{g(j/n)}\).
The cardinal Lagrange polynomials form an orthonormal basis; the
normalized reproducing kernels are continuous, and the frame measure is
\(\nu_n=\sum_{j=1}^n\delta_{j/n}\).
The kernels at the nodes are orthonormal, so averaged off-diagonal
leakage is identically zero.  With \(c(n)=n\), the measures \(m_n\)
converge weakly to Lebesgue measure.  Nevertheless, for
\(\sigma=\mathbf1_{\mathbb Q\cap[0,1]}\), one has
\(T_\sigma^n=I\), hence \(n^{-1}\Tr T_\sigma^n=1\), whereas
\(\int_0^1\sigma(x)\,dx=0\).
Thus symbols must be interpreted as Borel functions with specified
representatives when the frame measures have different null sets.
\end{remark}

\subsection{Confining symbols and Riesz means}

Let \(a:X\to[0,\infty)\) be continuous with compact sublevel sets.  For
each \(\alpha\), assume that the form
\begin{equation}\label{eq:unbounded-toeplitz-form}
    q_{\alpha,a}(f)
       =\int_X a(x)|\langle f,\kalpha{x}\rangle_\alpha|^2
                         \,d\nualpha(x)
\end{equation}
is densely defined on \(\Halpha\).
It is closed, since \(M_{\sqrt a}\mathcal A_\alpha\) is a closed
operator.  Denote its associated nonnegative self-adjoint operator by
\(T_a^\alpha\), and set
\(\widetilde a^{\,\alpha}(x)=\int_Xa(y)\,Q_\alpha(x,dy)\), allowing
the value \(+\infty\).  For \(t>0\), write
\[
    R_{\alpha,a}(t)=\Tr(t-T_a^\alpha)_+,
    \qquad s_+=\max\{s,0\}.
\]

\begin{proposition}\label{prop:riesz-sandwich}
For every \(\alpha\) and \(t>0\),
\begin{equation}\label{eq:riesz-sandwich}
    \int_X(t-\widetilde a^{\,\alpha})_+\,dm_\alpha
       \leq\frac{R_{\alpha,a}(t)}{c(\alpha)}
       \leq\int_X(t-a)_+\,dm_\alpha<\infty,
\end{equation}
where \((t-\infty)_+=0\).  In particular, \(T_a^\alpha\) has compact
resolvent.  Neither measure convergence nor averaged localization is
needed for these assertions.
\end{proposition}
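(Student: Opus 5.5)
The plan is to prove both inequalities in \eqref{eq:riesz-sandwich} by a variational (Berezin--Lieb type) argument for the convex function \(s\mapsto(t-s)_+\). Compactness of the resolvent will follow from the finiteness of the right-hand side. Throughout, \(\alpha\) and \(t>0\) are fixed.

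\emph{Finiteness and upper bound.} The function \((t-a)_+\) is bounded by \(t\) and supported in the compact sublevel set \(\{a\leq t\}\). Since \(\nualpha\) is Radon, \(\int(t-a)_+\,dm_\alpha\leq t\,m_\alpha(\{a\leq t\})<\infty\).

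For the upper bound we first show that the spectral subspace \(E_t=\mathbf 1_{[0,t)}(T_a^\alpha)\Halpha\) is finite-dimensional. Take \(f\in E_t\) with \(\|f\|=1\). Then \(q_{\alpha,a}(f)<t\), so
\[
\int_{\{a>2t\}}|\langle f,\kalpha{x}\rangle|^2\,d\nualpha\leq q_{\alpha,a}(f)/2t<1/2,
\]
and hence \(\langle C_{\alpha,A}f,f\rangle\geq 1/2\) with \(A=\{a\leq 2t\}\). The set \(A\) has finite measure, and Theorem~\ref{thm:frame-concentration} with \(\eps=1/2\) gives \(\dim E_t\leq 2\nualpha(A)<\infty\). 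Running the same argument for every \(t\) shows that \(T_a^\alpha\) has discrete spectrum of finite multiplicity accumulating only at \(\infty\), that is, compact resolvent.

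Now take an orthonormal eigenbasis \(e_j\) of \(E_t\), with \(T_a^\alpha e_j=\lambda_je_j\) and \(\lambda_j<t\). Each \(e_j\) lies in the form domain, and \(\lambda_j=q_{\alpha,a}(e_j)=\int a\,d\mu_{e_j}^\alpha\), where \(\mu_{e_j}^\alpha\) is a probability measure. Jensen's inequality for the convex function \((t-\cdot)_+\) gives \((t-\lambda_j)_+\leq\int(t-a)_+\,d\mu_{e_j}^\alpha\). Summing over \(j\), and using \(\sum_j|\langle e_j,\kalpha{x}\rangle|^2\leq 1\) (Bessel), yields \(R_{\alpha,a}(t)\leq\int(t-a)_+\,d\nualpha\). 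Dividing by \(c(\alpha)\) gives the upper bound.

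\emph{Lower bound.} Fix \(x\) with \(\widetilde a^{\,\alpha}(x)<t\); elsewhere the integrand vanishes. Then \(\kalpha{x}\) lies in the form domain and \(q_{\alpha,a}(\kalpha{x})=\widetilde a^{\,\alpha}(x)\), since \(|\langle\kalpha{x},\kalpha{y}\rangle|^2d\nualpha(y)=Q_\alpha(x,dy)\). Let \(\rho_x\) be the spectral measure of \(T_a^\alpha\) in \(\kalpha{x}\). It is a probability measure with mean \(\int\lambda\,d\rho_x=q_{\alpha,a}(\kalpha{x})\), since the form domain is the domain of \((T_a^\alpha)^{1/2}\). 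Jensen's inequality gives
\[
(t-\widetilde a^{\,\alpha}(x))_+\leq\int(t-\lambda)_+\,d\rho_x=\langle(t-T_a^\alpha)_+\kalpha{x},\kalpha{x}\rangle.
\]
We integrate this against \(\nualpha\). The positive trace formula \eqref{eq:positive-trace-frame}, applied to the bounded positive operator \((t-T_a^\alpha)_+\), identifies the right side with \(R_{\alpha,a}(t)\). Dividing by \(c(\alpha)\) completes the sandwich, and it is clear that no convergence hypothesis was used.

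\emph{Main obstacle.} The delicate point is the unbounded-form bookkeeping. We must justify that eigenvectors and the relevant kernels \(\kalpha{x}\) lie in the form domain, so that \(\langle T_a^\alpha f,f\rangle\) agrees with \(q_{\alpha,a}(f)\) as a spectral mean. This is standard for the operator associated with a closed form. The other point needing care is establishing discreteness of the spectrum before writing the trace as an eigenvalue sum. I handle this through the concentration-operator dimension bound above; alternatively, \(R_{\alpha,a}(t)\) can be computed directly via \eqref{eq:positive-trace-frame} and the spectral theorem, which avoids the issue.
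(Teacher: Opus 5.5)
Your argument is correct. The lower bound is exactly the paper's: Jensen for the spectral measure of \(k_x^\alpha\), then \eqref{eq:positive-trace-frame}. The difference is that you obtain the upper bound and the compactness claim in the opposite order.

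\emph{The paper's route.} It takes an arbitrary finite orthonormal family \(u_1,\dots,u_N\) in the form domain and uses the linear identity \(\sum_j(t-q_{\alpha,a}(u_j))=\int_X(t-a)s\,d\nu_\alpha\le\int_X(t-a)_+\,d\nu_\alpha\), where \(0\le s\le1\) is the occupation function. It then invokes the variational formula \(\Tr(t-T_a^\alpha)_+=\sup\sum_j(t-q_{\alpha,a}(u_j))\). Compactness of the resolvent is read off afterwards from the finite Riesz mean via \(\mathbf 1_{[0,r]}(T_a^\alpha)\le(r+1-T_a^\alpha)_+\).

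\emph{Your route.} You prove discreteness first, through the Chebyshev cutoff on \(\{a>2t\}\) and the capacity bound \(N(1-\eps)\le\nu_\alpha(A)\) of Theorem~\ref{thm:frame-concentration}. The trace is then an honest eigenvalue sum, and Bessel's inequality applied to the eigenvectors finishes.

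\emph{What each buys.} Your route avoids quoting the variational formula for an operator not yet known to have discrete spectrum. It also gives the explicit count \(\dim\mathbf 1_{[0,t)}(T_a^\alpha)\mathcal H_\alpha\le2\nu_\alpha(\{a\le2t\})\). The paper's route is shorter, does not need the concentration theorem, and makes compactness a consequence of the Riesz bound rather than an input.

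Two small remarks. First, Jensen is superfluous in your upper bound: \(\lambda_j<t\) gives \(t-\lambda_j=\int_X(t-a)\,d\mu_{e_j}^\alpha\le\int_X(t-a)_+\,d\mu_{e_j}^\alpha\) directly. Second, the ``alternative'' in your last sentence does not actually bypass the discreteness issue. Formula \eqref{eq:positive-trace-frame} only rewrites \(R_{\alpha,a}(t)\) as \(\int_X\langle(t-T_a^\alpha)_+k_x^\alpha,k_x^\alpha\rangle_\alpha\,d\nu_\alpha(x)\), and this integrand is not pointwise bounded by \((t-a(x))_+\). In Fock space it is positive where \(a(x)>t\). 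Avoiding discreteness genuinely requires the finite-family variational formula.
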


\begin{proof}
For any finite orthonormal family \(u_1,\ldots,u_N\) in the form domain,
put \(s(x)=\sum_j|\langle u_j,\kalpha{x}\rangle_\alpha|^2\).
Then \(0\leq s\leq1\), \(\int s\,d\nualpha=N\), and
\[
    \sum_{j=1}^N(t-q_{\alpha,a}(u_j))
       =\int_X(t-a)s\,d\nualpha
       \leq\int_X(t-a)_+\,d\nualpha.
\]
The variational formula for the trace of a positive part proves the
upper bound.  The last integral is finite because \((t-a)_+\) is
continuous and compactly supported.

For \(x\) with \(\widetilde a^{\,\alpha}(x)<\infty\), the vector
\(\kalpha{x}\) belongs to the form domain and has energy
\(q_{\alpha,a}(\kalpha{x})=\widetilde a^{\,\alpha}(x)\).
Jensen's inequality for its spectral probability measure gives
\[
    (t-\widetilde a^{\,\alpha}(x))_+
       \leq\langle(t-T_a^\alpha)_+\kalpha{x},\kalpha{x}\rangle_\alpha.
\]
The same inequality is trivial when the left side is defined to be zero.
Integrate and use \eqref{eq:positive-trace-frame}.
Finally, for \(r\geq0\),
\(\mathbf1_{[0,r]}(T_a^\alpha)\leq(r+1-T_a^\alpha)_+\).
All bounded spectral intervals therefore have finite rank, which proves
compactness of the resolvent.
\end{proof}

To obtain matching asymptotics, assume the following local control of the
unbounded kernel moment: for every compact \(K\subset X\),
\begin{equation}\label{eq:kernel-moment-tail}
    \lim_{R\to\infty}\limsup_{\alpha\to\infty}
       \int_K\int_{\{y:a(y)>R\}}
          a(y)\,Q_\alpha(x,dy)\,dm_\alpha(x)=0.
\end{equation}
This condition is automatic for bounded \(a\).  For an unbounded cost it
is a separate hypothesis; averaged localization alone controls only
bounded oscillations.

\begin{lemma}
\label{lem:unbounded-berezin}
Under \eqref{eq:kernel-moment-tail},
\[
    \int_K|\widetilde a^{\,\alpha}-a|\,dm_\alpha\longrightarrow0
\]
for every compact \(K\subset X\).
\end{lemma}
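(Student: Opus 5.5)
The plan is to split the cost at a level \(R\), treating the bounded part with Lemma~\ref{lem:averaged-berezin} and the tail with \eqref{eq:kernel-moment-tail}. Fix a compact \(K\) and \(R>0\), and write \(a=a_R+(a-R)_+\) with \(a_R=\min\{a,R\}\in C_b(X)\). Since \(Q_\alpha(x,\cdot)\) is a probability measure, for each \(x\in K\) we have
\[
    |\widetilde a^{\,\alpha}(x)-a(x)|
      \leq\int_X|a_R(y)-a_R(x)|\,Q_\alpha(x,dy)
        +\int_X(a(y)-R)_+\,Q_\alpha(x,dy)
        +(a(x)-R)_+ .
\]
Here we use \(\widetilde a^{\,\alpha}=\widetilde{a_R}^{\,\alpha}+\widetilde{(a-R)_+}^{\,\alpha}\), which holds in \([0,\infty]\) by linearity of the positive kernel. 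The inequality is trivially true when \(\widetilde a^{\,\alpha}(x)=\infty\), and in that case the right-hand side is also infinite. Once we show the integral of the right-hand side over \(K\) is finite for large \(\alpha\), it follows that \(\widetilde a^{\,\alpha}<\infty\) \(m_\alpha\)-a.e.\ on \(K\).

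We integrate each of the three terms over \(K\) against \(m_\alpha\).

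\emph{First term.} Since \(a_R\in C_b(X)\), Lemma~\ref{lem:averaged-berezin} with \(p=1\), specifically \eqref{eq:local-oscillation}, shows that this integral tends to \(0\) as \(\alpha\to\infty\) for each fixed \(R\).

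\emph{Second term.} Since \((a(y)-R)_+\leq a(y)\mathbf 1_{\{a>R\}}(y)\), this integral is bounded by the quantity appearing in \eqref{eq:kernel-moment-tail}.

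\emph{Third term.} Because \(a\) is continuous and \(K\) is compact, \(a\) is bounded on \(K\). For \(R\geq\max_K a\) this term therefore vanishes identically on \(K\).

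Combining the three estimates and using that \(m_\alpha(K)\) stays bounded for large \(\alpha\) (as in the proof of Lemma~\ref{lem:averaged-berezin}), we get
\[
    \limsup_{\alpha\to\infty}\int_K|\widetilde a^{\,\alpha}-a|\,dm_\alpha
      \leq\limsup_{\alpha\to\infty}\int_K\int_{\{a>R\}}a(y)\,Q_\alpha(x,dy)\,dm_\alpha(x)
\]
for every \(R\geq\max_K a\). Letting \(R\to\infty\) and applying \eqref{eq:kernel-moment-tail} gives the claim.

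The only delicate point is bookkeeping when \(\widetilde a^{\,\alpha}\) may be infinite. The decomposition is done in \([0,\infty]\) and shows that the tail integral controls exactly the set where \(\widetilde a^{\,\alpha}\) is infinite. For \(R\) large and \(\alpha\) large (depending on \(R\)), the limsup bound makes the tail integral finite, so \(\widetilde a^{\,\alpha}\) is finite a.e.\ on \(K\) and the subtraction is legitimate.
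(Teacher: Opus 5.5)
Your proof is correct and follows the paper's argument: you truncate at $R\geq\max_K a$, control $a_R=\min\{a,R\}$ with Lemma~\ref{lem:averaged-berezin}, bound the remainder by the tail integral in \eqref{eq:kernel-moment-tail}, and then take the upper limit in $\alpha$ followed by $R\to\infty$. Using the oscillation form \eqref{eq:local-oscillation} instead of the Berezin difference, and tracking the set where $\widetilde a^{\,\alpha}=\infty$ explicitly, are harmless refinements of that same route.
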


\begin{proof}
Let \(a_R=\min\{a,R\}\in C_b(X)\), and take \(R>\sup_Ka\).
Then
\[
    \int_K|\widetilde a^{\,\alpha}-a|\,dm_\alpha
       \leq\int_K|\widetilde{a_R}^{\,\alpha}-a_R|\,dm_\alpha
          +\int_K\int_{\{a(y)>R\}}a(y)\,
                       Q_\alpha(x,dy)\,dm_\alpha(x).
\]
The first term tends to zero by Lemma~\ref{lem:averaged-berezin}.
Take the upper limit in \(\alpha\), then let \(R\to\infty\).
\end{proof}

\begin{theorem}\label{thm:riesz-limit}
Let \(a:X\to[0,\infty)\) be continuous with compact sublevel sets.
Assume the standing convergence hypotheses, density of the domains of
the forms \eqref{eq:unbounded-toeplitz-form}, and
\eqref{eq:kernel-moment-tail}.  Then, for every \(t>0\),
\begin{equation}\label{eq:riesz-limit}
    \frac1{c(\alpha)}\Tr(t-T_a^\alpha)_+
       \longrightarrow\int_X(t-a(x))_+\,d\nu(x).
\end{equation}
\end{theorem}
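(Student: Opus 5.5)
The plan is to squeeze the normalized Riesz mean between the two sides of the sandwich \eqref{eq:riesz-sandwich} of Proposition~\ref{prop:riesz-sandwich} and show that both bounds converge to \(\int_X(t-a)_+\,d\nu\).

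\emph{Upper bound.} Since \(a\) is continuous with compact sublevel sets, the function \((t-a)_+\) is continuous and supported in the compact set \(\{a\leq t\}\), so it lies in \(C_c(X)\). Vague convergence \eqref{eq:measure-convergence} then gives directly
\[
    \int_X(t-a)_+\,dm_\alpha\longrightarrow\int_X(t-a)_+\,d\nu .
\]
Hence \(\limsup_\alpha R_{\alpha,a}(t)/c(\alpha)\) is at most the right-hand side of \eqref{eq:riesz-limit}.

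\emph{Lower bound.} Put \(K=\{a\leq t\}\), which is compact. Because \((t-\widetilde a^{\,\alpha})_+\geq0\), the lower integral can be restricted to \(K\). On \(K\), the map \(s\mapsto(t-s)_+\) is \(1\)-Lipschitz on \([0,\infty]\), with the convention \((t-\infty)_+=0\); this remains valid where \(\widetilde a^{\,\alpha}=\infty\), since there \(|(t-a)_+-0|\leq t\leq|\infty-a|\). It follows that
\[
    \int_K(t-\widetilde a^{\,\alpha})_+\,dm_\alpha
       \geq\int_K(t-a)_+\,dm_\alpha-\int_K|\widetilde a^{\,\alpha}-a|\,dm_\alpha .
\]
The last term tends to zero by Lemma~\ref{lem:unbounded-berezin}. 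That lemma needs \(\widetilde a^{\,\alpha}\) to be finite \(m_\alpha\)-a.e. on \(K\) only asymptotically, and its \(L^1(K)\)-convergence already forces this for large \(\alpha\). The first term equals \(\int_X(t-a)_+\,dm_\alpha\), which converges to \(\int_X(t-a)_+\,d\nu\) as in the upper bound. Therefore \(\liminf_\alpha R_{\alpha,a}(t)/c(\alpha)\) is at least the same limit, and the two bounds together prove \eqref{eq:riesz-limit}.

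\emph{Main point.} The only delicate step is the lower bound, because \(\widetilde a^{\,\alpha}\) may be large or infinite. Restricting to the compact sublevel set \(K\) is what lets Lemma~\ref{lem:unbounded-berezin}, and hence the tail hypothesis \eqref{eq:kernel-moment-tail}, control the Berezin error. Everything else is routine.
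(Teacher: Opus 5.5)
Your proposal is correct and follows the paper's proof. You squeeze with the Riesz sandwich, get the upper bound from vague convergence against \((t-a)_+\in C_c(X)\), and handle the lower bound on the compact set \(\{a\leq t\}\) via the \(1\)-Lipschitz property of \(s\mapsto(t-s)_+\) and Lemma~\ref{lem:unbounded-berezin}. The only difference is that you restrict to \(K=\{a\leq t\}\) with an indicator, where the paper inserts a continuous cutoff \(\chi\) equal to one on that set; the two are equivalent here.
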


\begin{proof}
The upper bound in \eqref{eq:riesz-sandwich} converges to the proposed
limit by vague convergence.  Choose \(0\leq\chi\leq1\) in \(C_c(X)\),
equal to \(1\) on the compact set \(\{a\leq t\}\).
The lower bound is at least
\(\int_X\chi(t-\widetilde a^{\,\alpha})_+\,dm_\alpha\).
Since \(s\mapsto(t-s)_+\) is \(1\)-Lipschitz,
Lemma~\ref{lem:unbounded-berezin} shows that this differs from
\(\int_X\chi(t-a)_+\,dm_\alpha\) by a quantity tending to zero.
The latter integral has the required limit.
\end{proof}

\begin{corollary}\label{cor:counting-limit}
Under the hypotheses of Theorem~\ref{thm:riesz-limit}, if \(t>0\) and
\(\nu(\{a=t\})=0\), then
\[
    \frac1{c(\alpha)}
       \Tr\mathbf1_{[0,t]}(T_a^\alpha)
       \longrightarrow\nu(\{a\leq t\}).
\]
\end{corollary}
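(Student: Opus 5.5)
We derive the counting limit from Theorem~\ref{thm:riesz-limit} by the standard sandwich of the indicator \(\mathbf1_{[0,t]}\) between difference quotients of Riesz means.

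For \(0<h<t\), the elementary pointwise inequalities on \([0,\infty)\)
\[
    \frac{(t-s)_+-(t-h-s)_+}{h}\;\leq\;\mathbf1_{[0,t]}(s)\;\leq\;\frac{(t+h-s)_+-(t-s)_+}{h}
\]
hold: both quotients lie in \([0,1]\), the left one vanishes for \(s\ge t\), and the right one equals \(1\) for \(s\le t\). By the spectral theorem they pass to operator inequalities in \(T_a^\alpha\). All three operators are trace class by Proposition~\ref{prop:riesz-sandwich}, since each Riesz mean is finite and \(T_a^\alpha\) has compact resolvent. Taking traces and normalizing gives
\[
    \frac{R_{\alpha,a}(t)-R_{\alpha,a}(t-h)}{h\,c(\alpha)}
    \leq\frac{\Tr\mathbf1_{[0,t]}(T_a^\alpha)}{c(\alpha)}
    \leq\frac{R_{\alpha,a}(t+h)-R_{\alpha,a}(t)}{h\,c(\alpha)}.
\]

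Next we let \(\alpha\to\infty\) with \(h\) fixed, applying Theorem~\ref{thm:riesz-limit} at the three positive times \(t-h\), \(t\), and \(t+h\). Put \(G(\tau)=\int_X(\tau-a)_+\,d\nu\). Then the liminf of the middle term is at least \((G(t)-G(t-h))/h\), and its limsup is at most \((G(t+h)-G(t))/h\).

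By Fubini (or layer cake), \(G(\tau)=\int_0^\tau \nu(\{a\le u\})\,du\), and this is finite because the sublevel sets of \(a\) are compact and \(\nu\) is Radon. Since \(F(u)=\nu(\{a\le u\})\) is monotone, the lower quotient lies between \(F(t-h)\) and \(F(t)\), and the upper one lies between \(F(t)\) and \(F(t+h)\). As \(h\downarrow0\), right continuity of \(F\) sends the upper quotient to \(F(t)\). The lower quotient tends to \(F(t^-)=\nu(\{a<t\})\).

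The hypothesis \(\nu(\{a=t\})=0\) gives \(F(t^-)=F(t)\), so the liminf and limsup coincide and equal \(\nu(\{a\le t\})\). The only delicate point is this left-limit issue: without the null-level assumption the lower bound only reaches \(\nu(\{a<t\})\). Everything else is routine.
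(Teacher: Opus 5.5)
Your proposal is correct and follows essentially the same route as the paper: the spectral-theorem sandwich of \(\mathbf1_{[0,t]}(T_a^\alpha)\) between Riesz-mean difference quotients for \(0<h<t\), then Theorem~\ref{thm:riesz-limit} at fixed \(h\), then \(h\downarrow0\) using \(\nu(\{a=t\})=0\). You also write out details the paper leaves implicit: the pointwise inequalities, trace-class finiteness, the layer-cake identity \(G(\tau)=\int_0^\tau\nu(\{a\le u\})\,du\), and the left and right limits of the volume function.
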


\begin{proof}
For \(0<h<t\), the spectral theorem gives
\[
    \frac{R_{\alpha,a}(t)-R_{\alpha,a}(t-h)}h
       \leq\Tr\mathbf1_{[0,t]}(T_a^\alpha)
       \leq\frac{R_{\alpha,a}(t+h)-R_{\alpha,a}(t)}h.
\]
Divide by \(c(\alpha)\), apply \eqref{eq:riesz-limit}, and let
\(h\downarrow0\).  The assumption on the level set makes both limiting
difference quotients equal to \(\nu(\{a\leq t\})\).
\end{proof}

The upper Riesz-mean bound in Proposition~\ref{prop:riesz-sandwich}
already yields spectral lower bounds without a limit.
Theorem~\ref{thm:riesz-limit} identifies the asymptotic filling profile
when the additional moment condition holds.  These are the two forms of
the trace input used for radial costs in the next section.

% -----------------------------------------------------------------------------
\section{Radial spectra and occupation bounds}
\label{sec:radial-spectra}

The trace bounds of the preceding section measure the supply of low-energy
states.  We now turn them into a price for occupying those states.
At a fixed parameter the price is bounded below by the cost of filling
the cheapest available frame volume.  Kernel moment estimates give a
matching upper bound, while the semiclassical hypotheses identify the
limiting cost.  These statements apply to orthonormal families and, more
generally, to positive trace-class operators with occupation numbers at
most one.

All results in this section up to
Subsection~\ref{subsec:semiclassical-filling} concern a fixed frame and
require no convergence assumptions.  We retain the subscript \(\alpha\)
to keep track of the actual measure \(\nualpha\).

\subsection{Radial forms and their lowest eigenvalue}

For radial symbols, assume that \(X\) is proper, so that closed bounded
sets are compact.  Fix \(e\in X\), put \(d_e(x)=d(x,e)\), and let
\(\rho:[0,\infty)\to[0,\infty)\) be continuous, strictly increasing,
with \(\rho(0)=0\) and \(\rho(r)\to\infty\).
Then
\[
    a(x)=\rho(d_e(x))
\]
has compact sublevel sets.  We assume that its quadratic form
\(q_{\alpha,a}\), defined in \eqref{eq:unbounded-toeplitz-form}, is
densely defined.  A sufficient condition is
\(\widetilde a^{\,\alpha}(x)<\infty\) for \(\nualpha\)-almost every
\(x\): those frame vectors lie in the form domain and have dense span
by Parseval.  This is a sufficient test, not an additional standing
assumption.  As in the introduction, \(T_d^\alpha\) denotes the choice
\(a=d_e\); we write \(T_{d^p}^\alpha\) when \(a=d_e^p\).

\begin{proposition}
\label{prop:radial-spectrum}
The operator \(T_a^\alpha\) has compact resolvent.  Its eigenvalues,
repeated with multiplicity, can therefore be ordered as
\[
    0\leq\lambda_{1,\alpha}(a)\leq\lambda_{2,\alpha}(a)
       \leq\cdots,
\]
with \(\lambda_{j,\alpha}(a)\to\infty\) if \(\Halpha\) is
infinite dimensional.  Moreover, \(\nualpha(\{e\})\leq1\).
If \(\nualpha(\{e\})<1\), then
\[
    \lambda_{1,\alpha}(a)
       \geq\sup_{r>0}\rho(r)\bigl(1-V_\alpha(r)\bigr)_+>0,
    \qquad V_\alpha(r)=\nualpha(B(e,r)),
\]
and \(T_a^\alpha\) has compact inverse.  If
\(\nualpha(\{e\})=1\), its kernel is exactly
\(\operatorname{span}\{\kalpha{e}\}\).
\end{proposition}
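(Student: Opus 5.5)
Compactness of the resolvent comes directly from Proposition~\ref{prop:riesz-sandwich}: \(a=\rho\circ d_e\) is continuous, nonnegative, and has compact sublevel sets (since \(X\) is proper and \(\rho\) is strictly increasing and unbounded), and the form is assumed densely defined. The min--max ordering of the eigenvalues and \(\lambda_{j,\alpha}(a)\to\infty\) in infinite dimension are then standard consequences of compact resolvent.

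For \(\nualpha(\{e\})\leq1\), I would use \eqref{eq:positive-trace-frame} with \(B=\kalpha{e}\otimes\kalpha{e}\), a rank-one projection of trace \(1\). This gives
\[
1=\int_X|\langle\kalpha{x},\kalpha{e}\rangle_\alpha|^2\,d\nualpha(x)\geq\nualpha(\{e\}).
\]
Equivalently, \(Q_\alpha(e,\{e\})\leq1\).

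For the lower bound I would apply the occupation principle to a single normalized eigenvector \(f\) with \(T_a^\alpha f=\lambda_{1,\alpha}f\). Its density \(h(x)=|\langle f,\kalpha{x}\rangle_\alpha|^2\) satisfies \(0\leq h\leq1\) and \(\int h\,d\nualpha=1\). Fix \(r>0\). Since \(a\geq\rho(r)\) off \(B(e,r)\), which has mass \(\int_{X\setminus B(e,r)}h\,d\nualpha\geq1-V_\alpha(r)\), we get
\[
\lambda_{1,\alpha}=q_{\alpha,a}(f)\geq\rho(r)\bigl(1-V_\alpha(r)\bigr)_+ .
\]
Taking the supremum over \(r\) gives the bound. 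Positivity is the main point. Because \(\nualpha\) is Radon, \(V_\alpha(r)\downarrow\nualpha(\{e\})<1\) as \(r\downarrow0\), so some small \(r\) has \(V_\alpha(r)<1\); since \(\rho(r)>0\) for \(r>0\), the supremum is strictly positive. Then \(T_a^\alpha\geq\lambda_{1,\alpha}>0\) is invertible, and its inverse is compact because the resolvent is compact.

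For the case \(\nualpha(\{e\})=1\), first suppose \(T_a^\alpha f=0\) with \(\|f\|=1\). Then \(\int a\,h\,d\nualpha=0\), and \(a>0\) off \(\{e\}\), so \(h\) vanishes \(\nualpha\)-a.e.\ off \(e\). Hence \(1=\int h\,d\nualpha=h(e)\nualpha(\{e\})=|\langle f,\kalpha{e}\rangle|^2\). Since \(\|f\|=\|\kalpha{e}\|=1\), equality in Cauchy--Schwarz forces \(f\in\operatorname{span}\{\kalpha{e}\}\). Conversely, \(\nualpha(\{e\})=1\) makes equality hold in the display above, so \(|\langle\kalpha{x},\kalpha{e}\rangle|^2=0\) for \(\nualpha\)-a.e.\ \(x\neq e\). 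Then \(q_{\alpha,a}(\kalpha{e})=\int a(x)|\langle\kalpha{e},\kalpha{x}\rangle|^2d\nualpha=a(e)=0\), so \(\kalpha{e}\) lies in the form domain with zero energy, hence in the kernel.

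The main obstacle I expect is the step from \(q_{\alpha,a}(f)=0\) to \(T_a^\alpha f=0\), and conversely. It should follow because the operator is nonnegative: \(\|(T_a^\alpha)^{1/2}f\|^2=q_{\alpha,a}(f)\).
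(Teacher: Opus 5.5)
Your proposal is correct and follows the paper's proof essentially step by step. It uses compact resolvent from Proposition~\ref{prop:riesz-sandwich}, Parseval for $k_e^\alpha$, the exclusion bound $h\le 1$ applied to $B(e,r)$, and the fact that zero-energy vectors have frame coefficients supported at $e$. The only cosmetic differences are that you apply the bound to the ground state rather than to an arbitrary unit vector in the form domain, and that you conclude $f\in\operatorname{span}\{k_e^\alpha\}$ by equality in Cauchy--Schwarz where the paper uses the reconstruction formula; both are fine.
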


\begin{proof}
Compactness of the resolvent follows from
Proposition~\ref{prop:riesz-sandwich}.  Parseval applied to
\(\kalpha{e}\) gives \(\nualpha(\{e\})\leq1\).
For any unit vector \(f\) in the form domain, the pointwise bound
\(|\langle f,\kalpha{x}\rangle_\alpha|^2\leq1\) implies
\[
    q_{\alpha,a}(f)
       \geq\rho(r)\int_{X\setminus B(e,r)}
                    |\langle f,\kalpha{x}\rangle_\alpha|^2\,d\nualpha(x)
       \geq\rho(r)\bigl(1-V_\alpha(r)\bigr).
\]
The bound is positive for some small \(r>0\) when
\(\nualpha(\{e\})<1\), since
\(V_\alpha(r)\downarrow\nualpha(\{e\})\) as \(r\downarrow0\).

If \(\nualpha(\{e\})=1\), Parseval shows that
\(\langle\kalpha{e},\kalpha{x}\rangle_\alpha=0\) for almost every
\(x\ne e\), and hence \(q_{\alpha,a}(\kalpha{e})=0\).
Conversely, a vector of zero energy has frame coefficients supported
on \(\{e\}\).  The reconstruction formula then makes it a multiple
of \(\kalpha{e}\).
\end{proof}

The compactness mechanism can also be seen directly in the estimate
\[
    \int_{X\setminus B(e,R)}
         |\langle f,\kalpha{x}\rangle_\alpha|^2\,d\nualpha(x)
       \leq\frac{q_{\alpha,a}(f)}{\rho(R)}.
\]
Restriction of the analysis map to a fixed ball is Hilbert--Schmidt,
with squared Hilbert--Schmidt norm \(V_\alpha(R)\).  A bounded set
with bounded radial energy is therefore precompact.  This is the
radial instance of the frame compactness approach of
Mitkovski--Stockdale--Wagner--Wick \cite{MSWWCompactness}.
The distinction between compact resolvent and compact inverse is
essential when the frame measure has atoms.

\subsection{Filling volume, including atoms}
\label{subsec:classical-filling}

For a general continuous nonnegative confining symbol \(a\), define
\[
\begin{aligned}
    F_{\alpha,a}(t)&=\nualpha(\{a\leq t\}),\qquad t\geq0,\\
    F_{\alpha,a}^{\dagger}(s)
       &=\inf\{t\geq0:F_{\alpha,a}(t)\geq s\},\\
    \Phi_{\alpha,a}(N)&=\int_0^N F_{\alpha,a}^{\dagger}(s)\,ds.
\end{aligned}
\]
We use \(\inf\varnothing=+\infty\) and
\(\Phi_{\alpha,a}(0)=0\).  The function \(\Phi_{\alpha,a}\) is the
least classical cost of placing mass \(N\) with density at most one.
Allowing fractional occupation of a level set accommodates atoms and
removes any need to differentiate the volume function.

\begin{lemma}\label{lem:filling-duality}
For every finite \(0\leq N\leq\nualpha(X)\),
\[
\begin{aligned}
    \Phi_{\alpha,a}(N)
       &=\inf_{\substack{0\leq h\leq1\\ \int_Xh\,d\nualpha=N}}
                 \int_X a h\,d\nualpha\\
       &=\int_0^\infty\bigl(N-F_{\alpha,a}(t)\bigr)_+\,dt\\
       &=\sup_{t\geq0}
            \left\{Nt-\int_X(t-a)_+\,d\nualpha\right\}.
\end{aligned}
\]
The infimum is attained.  For \(0<N<\nualpha(X)\), one may take
\begin{equation}\label{eq:classical-filling-density}
    h_N=\mathbf1_{\{a<\tau\}}+\theta\mathbf1_{\{a=\tau\}},
    \qquad \tau=F_{\alpha,a}^{\dagger}(N),\quad 0\leq\theta\leq1,
\end{equation}
where \(\theta\) is chosen to make the total mass \(N\).
\end{lemma}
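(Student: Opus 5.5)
The plan is to prove the identities in the order: layer-cake identity, then the infimum (bathtub principle), then the Legendre-type supremum. Each step uses only elementary properties of the nondecreasing, right-continuous function \(F=F_{\alpha,a}\). Note that \(F(t)<\infty\), since sublevel sets are compact and \(\nualpha\) is Radon, and that \(F(t)\uparrow\nualpha(X)\).

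\emph{Step 1 (layer cake).} By definition of the generalized inverse and right continuity of \(F\), \(F^\dagger(s)\leq t\) if and only if \(F(t)\geq s\). Hence
\[
    \Phi_{\alpha,a}(N)=\int_0^N\!\!\int_0^\infty\mathbf1_{\{F(t)<s\}}\,dt\,ds ,
\]
and Tonelli gives \(\int_0^\infty(N-F(t))_+\,dt\). If \(N=\nualpha(X)\), some values \(F^\dagger(s)\) may equal \(+\infty\) only at \(s=N\), and this endpoint is harmless.

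\emph{Step 2 (bathtub).} For admissible \(h\), Tonelli gives
\[
    \int_X ah\,d\nualpha=\int_0^\infty\int_X\mathbf1_{\{a>t\}}h\,d\nualpha\,dt .
\]
For each \(t\), \(\int_{\{a>t\}}h\,d\nualpha=N-\int_{\{a\leq t\}}h\,d\nualpha\geq(N-F(t))_+\), using \(h\leq1\) and \(h\geq0\). This yields the lower bound \(\int ah\geq\Phi_{\alpha,a}(N)\) by Step 1. For \(0<N<\nualpha(X)\), \(\tau=F^\dagger(N)\) is finite, and we have \(\nualpha(\{a<\tau\})=\lim_{t\uparrow\tau}F(t)\leq N\leq F(\tau)\). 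Thus \(\theta\in[0,1]\) can be chosen as claimed, with \(\theta\) arbitrary if \(\nualpha(\{a=\tau\})=0\). For \(h_N\), the inequality above is an equality for every \(t\):
\begin{itemize}
\item when \(t<\tau\), \(F(t)<N\) and \(h_N=1\) on \(\{a\leq t\}\);
\item when \(t\geq\tau\), \(h_N\) vanishes on \(\{a>t\}\) and \(N\leq F(t)\).
\end{itemize}
So \(h_N\) attains the infimum. The endpoint cases are direct: for \(N=0\) take \(h=0\), and for \(N=\nualpha(X)<\infty\) take \(h=1\). In the latter case \(\int a\,d\nualpha=\int_0^\infty(\nualpha(X)-F)\,dt\) by the same layer cake, and this value may be infinite.

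\emph{Step 3 (duality).} For any admissible \(h\) and \(t\geq0\),
\[
    Nt-\int_X ah\,d\nualpha=\int_X(t-a)h\,d\nualpha\leq\int_X(t-a)_+\,d\nualpha .
\]
Hence the supremum is at most the infimum. For the reverse inequality, take \(t=\tau\) when \(0<N<\nualpha(X)\). Then \((\tau-a)h_N=(\tau-a)_+\) pointwise, because \(h_N=1\) where \(a<\tau\) and the factor vanishes where \(a=\tau\). So equality holds at \(t=\tau\). When \(N=0\), take \(t=0\). When \(N=\nualpha(X)\), let \(t\to\infty\): we have \(Nt-\int(t-a)_+=\int\min(a,t)\,d\nualpha\uparrow\int a\,d\nualpha\) by monotone convergence.

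The main obstacle is bookkeeping rather than analysis: getting the generalized-inverse equivalence right at jumps of \(F\), which correspond to atoms or level sets of positive measure, and handling the boundary value \(N=\nualpha(X)\), where \(\tau\) may be infinite. I would settle the equivalence \(F^\dagger(s)\leq t\iff F(t)\geq s\) carefully first; everything else then follows by Tonelli.
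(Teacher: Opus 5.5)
Your proof is correct. For the first two identities and the attainment statement it follows the paper's argument essentially step for step. You use the generalized-inverse equivalence $F_{\alpha,a}^{\dagger}(s)\le t\iff F_{\alpha,a}(t)\ge s$ together with Tonelli, the layer-cake lower bound using $h\le 1$, and the bracket $\nu_\alpha(\{a<\tau\})\le N\le \nu_\alpha(\{a\le\tau\})$. When $\tau=0$, read your left limit as $\nu_\alpha(\varnothing)=0$.

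You depart from the paper only in the last equality. The paper computes it directly: layer cake gives $\int_X(t-a)_+\,d\nu_\alpha=\int_0^t F_{\alpha,a}(u)\,du$, so the dual objective equals $\int_0^t (N-F_{\alpha,a}(u))\,du$. Because the integrand is nonincreasing, its supremum over $t$ is $\int_0^\infty (N-F_{\alpha,a})_+\,dt$. This ties the third expression to the second in one line, uniformly in $N$ and including infinite values, with no case split.

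You instead prove weak duality, $\int_X ah\,d\nu_\alpha\ge Nt-\int_X(t-a)_+\,d\nu_\alpha$ for every admissible $h$ and every $t\ge 0$. You then close the gap by complementary slackness at $t=\tau$, where $(\tau-a)h_N=(\tau-a)_+$ pointwise. The endpoints are handled separately: $t=0$ when $N=0$, and $t\to\infty$ with monotone convergence of $\int_X\min\{a,t\}\,d\nu_\alpha$ when $N=\nu_\alpha(X)$.

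Your route costs a three-way case analysis. In return it identifies $\tau=F_{\alpha,a}^{\dagger}(N)$ as the optimal multiplier. It also isolates the one-sided inequality in exactly the form the paper uses later, for instance in the proof of Corollary~\ref{cor:semiclassical-mean-dispersion}, and it yields that inequality for any admissible $h$ without computing the filling cost.
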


\begin{proof}
If \(h\) is admissible, layer cake and \(h\leq1\) give
\[
    \int_Xah\,d\nualpha
       =\int_0^\infty\int_{\{a>t\}}h\,d\nualpha\,dt
       \geq\int_0^\infty\bigl(N-F_{\alpha,a}(t)\bigr)_+\,dt.
\]
Tonelli's theorem applied to the generalized inverse identifies the
last expression with \(\int_0^NF_{\alpha,a}^{\dagger}(s)\,ds\).
For an interior mass \(N\), right continuity of \(F_{\alpha,a}\)
and finiteness of its values imply
\[
    \nualpha(\{a<\tau\})\leq N\leq\nualpha(\{a\leq\tau\}).
\]
Thus \eqref{eq:classical-filling-density} is admissible and attains
the bound.  At the endpoints use \(h=0\), or \(h=1\) if
\(N=\nualpha(X)<\infty\).

Finally, layer cake gives
\[
    \int_X(t-a)_+\,d\nualpha=\int_0^tF_{\alpha,a}(u)\,du.
\]
Since \(N-F_{\alpha,a}(u)\) is nonincreasing, the supremum of its
integral over \([0,t]\) is the integral of its positive part over
\([0,\infty)\).  This proves the last equality, including cases
where the value is infinite.
\end{proof}

\subsection{Occupation bounds and spectral filling}
\label{subsec:occupation-spectral-filling}

Assume that \(q_{\alpha,a}\) is densely defined, and let
\(D_\alpha=\dim\Halpha=\nualpha(X)\), allowing \(D_\alpha=\infty\).
For real \(0\leq N\leq D_\alpha\), define the interpolated
eigenvalue sum
\begin{equation}\label{eq:spectral-filling-function}
    S_{\alpha,a}(N)
       =\sum_{j=1}^{n}\lambda_{j,\alpha}(a)
          +\theta\lambda_{n+1,\alpha}(a),
    \qquad N=n+\theta.
\end{equation}
Here \(n\in\mathbb N\cup\{0\}\) and \(0\leq\theta<1\).
The last term is omitted when \(\theta=0\).  Thus the definition
also applies at full occupation of a finite-dimensional space.

For \(0\leq\gamma\leq I\) of trace class, set
\[
    h_\gamma^\alpha(x)
       =\langle\gamma\kalpha{x},\kalpha{x}\rangle_\alpha,
    \qquad
    \mathcal E_{\alpha,a}(\gamma)
       =\int_Xa(x)h_\gamma^\alpha(x)\,d\nualpha(x).
\]
This is the extended quadratic-form trace of \(\gamma T_a^\alpha\).
Equivalently, if \(\gamma u_j=\theta_j u_j\) with \(\theta_j>0\),
then \(\mathcal E_{\alpha,a}(\gamma)=\sum_j\theta_jq_{\alpha,a}(u_j)\),
with value \(+\infty\) allowed.
We also recall the scalar filling cost
\(\Phi_{\alpha,a}(N)=\int_0^N F_{\alpha,a}^{\dagger}(s)\,ds\).

\begin{theorem}
\label{thm:occupation-filling}
Let \(0\leq\gamma\leq I\) be trace class, with \(N=\Tr\gamma\).  Then
\begin{equation}\label{eq:occupation-filling-chain}
    \mathcal E_{\alpha,a}(\gamma)
       \geq S_{\alpha,a}(N)
       \geq \Phi_{\alpha,a}(N).
\end{equation}
More precisely,
\begin{equation}\label{eq:spectral-filling-duality}
\begin{aligned}
    S_{\alpha,a}(N)
       &=\min_{\substack{0\leq\eta\leq I\\\Tr\eta=N}}
          \mathcal E_{\alpha,a}(\eta)\\
       &=\sup_{t\geq0}\{Nt-\Tr(t-T_a^\alpha)_+\}.
\end{aligned}
\end{equation}
For integer \(N\), the minimum is attained by projection onto the
first \(N\) eigenvectors.  In particular, every orthonormal family
\(\varphi_1,\ldots,\varphi_N\) satisfies
\begin{equation}\label{eq:orthonormal-filling-bound}
    \sum_{j=1}^Nq_{\alpha,a}(\varphi_j)
       \geq S_{\alpha,a}(N)
       \geq \Phi_{\alpha,a}(N).
\end{equation}
\end{theorem}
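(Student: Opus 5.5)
The proof will run in three stages: a Ky Fan type variational principle for \(S_{\alpha,a}\), its Legendre dual, and a comparison with the classical filling cost of Lemma~\ref{lem:filling-duality}.

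\textbf{Step 1: the minimum formula.} By Proposition~\ref{prop:riesz-sandwich}, \(T_a^\alpha\) has compact resolvent, so it has an orthonormal eigenbasis \(e_j\) with eigenvalues \(\lambda_j=\lambda_{j,\alpha}(a)\). Given \(0\leq\eta\leq I\) of trace \(N\), put \(w_j=\langle\eta e_j,e_j\rangle_\alpha\in[0,1]\), so that \(\sum_jw_j=N\). If \(\mathcal E_{\alpha,a}(\eta)<\infty\), then every eigenvector of \(\eta\) with positive eigenvalue lies in the form domain, and expanding in the basis \(e_j\) gives
\[
\mathcal E_{\alpha,a}(\eta)=\sum_j\theta_j\,q_{\alpha,a}(u_j)=\sum_j\lambda_jw_j .
\]
Here \(q_{\alpha,a}(u)=\sum_j\lambda_j|\langle u,e_j\rangle_\alpha|^2\) by the spectral theorem, and all terms are nonnegative, so Tonelli permits the interchange. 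A bathtub argument then applies: since \(\lambda_j\) is nondecreasing, \(0\leq w_j\leq1\), and \(\sum_j w_j=N\), we get \(\sum_j\lambda_jw_j\geq S_{\alpha,a}(N)\). To see this, compare with \(w^*=(1,\dots,1,\theta,0,\dots)\) and note \(\sum_j(\lambda_j-\lambda_{n+1})(w_j-w^*_j)\geq0\) termwise. Equality is attained by \(\eta=\sum_{j\leq n}e_j\otimes e_j+\theta\,e_{n+1}\otimes e_{n+1}\), which is the spectral projection when \(N\) is an integer. This gives the minimum in \eqref{eq:spectral-filling-duality} and the first inequality of \eqref{eq:occupation-filling-chain}. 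The orthonormal-family statement \eqref{eq:orthonormal-filling-bound} follows by taking \(\gamma\) to be the projection onto their span, for which \(h^\alpha_\gamma=\sum_j|\langle\varphi_j,\kalpha{x}\rangle_\alpha|^2\).

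\textbf{Step 2: the sup formula.} I will use \(\Tr(t-T_a^\alpha)_+=\sum_j(t-\lambda_j)_+\). Then \(Nt-\sum_j(t-\lambda_j)_+\) is concave and piecewise linear in \(t\), with slope \(N-\#\{j:\lambda_j\leq t\}\). Its maximum is therefore attained at \(t=\lambda_{n+1}\), or at \(t=\lambda_n\) when \(\theta=0\), and direct evaluation there gives \(S_{\alpha,a}(N)\). An alternative is to quote the scalar version of Lemma~\ref{lem:filling-duality} applied to counting measure on the spectrum. The degenerate case \(N=D_\alpha<\infty\), where the supremum is approached as \(t\to\infty\), is handled by the same slope computation.

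\textbf{Step 3: comparison with \(\Phi_{\alpha,a}\).} This is the key step, and there are two routes. The first goes through the duality: by the upper Riesz bound of Proposition~\ref{prop:riesz-sandwich}, \(\Tr(t-T_a^\alpha)_+\leq\int_X(t-a)_+\,d\nualpha\). Hence
\[
Nt-\Tr(t-T_a^\alpha)_+\geq Nt-\int_X(t-a)_+\,d\nualpha
\]
for every \(t\), and taking suprema via the last formula of Lemma~\ref{lem:filling-duality} gives \(S_{\alpha,a}(N)\geq\Phi_{\alpha,a}(N)\). The second route is direct: \(h_\gamma^\alpha\) satisfies \(0\leq h\leq1\) (since \(\gamma\leq I\) and \(\|\kalpha{x}\|_\alpha=1\)) and \(\int h\,d\nualpha=\Tr\gamma=N\) by \eqref{eq:positive-trace-frame}, so the infimum formula of Lemma~\ref{lem:filling-duality} applies.

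The main obstacle I expect is in Step 1, in the bookkeeping of the identity \(\mathcal E_{\alpha,a}(\eta)=\sum_j\lambda_j w_j\) for unbounded \(T_a^\alpha\). This requires identifying the frame-integral definition \(\int a\,h_\eta\,d\nualpha\) with \(\sum_j\theta_j q_{\alpha,a}(u_j)\) through Tonelli on nonnegative terms, and then rewriting it in the eigenbasis, while allowing the value \(+\infty\) throughout. When the energy is infinite the inequality is trivial. The attainment of the minimum at noninteger \(N\) also requires \(\lambda_{n+1}\) to exist, which holds because \(N<D_\alpha\) in that case.
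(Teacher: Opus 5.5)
Your proposal is correct and follows the paper's proof. The paper uses the same bathtub argument in the eigenbasis of \(T_a^\alpha\) for the minimum, the same Riesz-mean duality (which it obtains by applying Lemma~\ref{lem:filling-duality} to counting measure on the eigenvalue indices, equivalent to your slope computation), and the same occupation bounds \(0\leq h_\gamma^\alpha\leq1\), \(\int_X h_\gamma^\alpha\,d\nualpha=N\) for the comparison with \(\Phi_{\alpha,a}\); your first route in Step~3, through the upper bound of Proposition~\ref{prop:riesz-sandwich}, is the convex-duality alternative the paper mentions right after its proof, and for your second route you should say explicitly that the occupation bound is applied to the minimizer from Step~1 to get \(S_{\alpha,a}(N)\geq\Phi_{\alpha,a}(N)\).
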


\begin{proof}
The operator inequality \(0\leq\gamma\leq I\) gives
\(0\leq h_\gamma^\alpha\leq1\).  Parseval and Tonelli give
\(\int_Xh_\gamma^\alpha\,d\nualpha=\Tr\gamma=N\), so
Lemma~\ref{lem:filling-duality} implies
\(\mathcal E_{\alpha,a}(\gamma)\geq\Phi_{\alpha,a}(N)\).

Let \(v_j\) be an orthonormal eigenbasis of \(T_a^\alpha\), and put
\(b_j=\langle\gamma v_j,v_j\rangle_\alpha\).  Then
\[
    0\leq b_j\leq1,\qquad \sum_jb_j=N,\qquad
    \mathcal E_{\alpha,a}(\gamma)
       =\sum_j\lambda_{j,\alpha}(a)b_j.
\]
Filling this nondecreasing sequence from the bottom minimizes the last
sum.  Equivalently, apply Lemma~\ref{lem:filling-duality} to counting
measure on the eigenvalue indices.  The minimizer has \(b_j=1\)
for \(j\leq n\), \(b_{n+1}=\theta\), and all other entries zero.
Its cost is \eqref{eq:spectral-filling-function}, and the same lemma
gives the Riesz-mean duality \eqref{eq:spectral-filling-duality}.
Applying the classical lower bound to this minimizing operator proves
\(S_{\alpha,a}(N)\geq\Phi_{\alpha,a}(N)\).
Taking \(\gamma\) to be the projection onto the given orthonormal
family proves \eqref{eq:orthonormal-filling-bound}.
\end{proof}

This is the frame realization of the exclusion and distribution--energy
mechanism discussed in \cite{Rumin}.  It uses the actual frame measure
at each parameter, including its atoms and its nonconstant density.
The trace upper bound in Proposition~\ref{prop:riesz-sandwich} gives
the same lower estimate by convex duality.

\begin{corollary}\label{cor:radial-filling}
Under the radial assumptions above, let
\[
    V_\alpha^{\dagger}(s)
       =\inf\{r\geq0:V_\alpha(r)\geq s\}.
\]
If \(N=n+\theta\), where \(n\in\mathbb N\cup\{0\}\) and
\(0\leq\theta<1\), then
\begin{equation}\label{eq:radial-spectral-filling}
    \sum_{j=1}^n\lambda_{j,\alpha}(\rho\circ d_e)
       +\theta\lambda_{n+1,\alpha}(\rho\circ d_e)
       \geq\int_0^N\rho\bigl(V_\alpha^{\dagger}(s)\bigr)\,ds.
\end{equation}
\end{corollary}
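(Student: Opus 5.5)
The corollary is the radial specialization of Theorem~\ref{thm:occupation-filling}, with \(a=\rho\circ d_e\). The left-hand side of \eqref{eq:radial-spectral-filling} is exactly \(S_{\alpha,a}(N)\) from \eqref{eq:spectral-filling-function}. Theorem~\ref{thm:occupation-filling} already gives \(S_{\alpha,a}(N)\geq\Phi_{\alpha,a}(N)=\int_0^NF_{\alpha,a}^\dagger(s)\,ds\). The only thing left to prove is the pointwise identity
\[
    F_{\alpha,a}^{\dagger}(s)=\rho\bigl(V_\alpha^{\dagger}(s)\bigr),\qquad 0<s\leq N,
\]
or at least the inequality \(\geq\), which is all the lower bound needs. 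The range is \(0\leq N\leq D_\alpha\), as in the definition of \(S_{\alpha,a}\). Before using the theorem I would also confirm that its hypotheses hold: \(a\) is continuous with compact sublevel sets because \(X\) is proper and \(\rho\) is continuous, strictly increasing and unbounded, and the form is densely defined by the standing radial assumption.

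To compare the two generalized inverses, note that \(\rho\) is a strictly increasing continuous bijection of \([0,\infty)\) onto \([0,\infty)\), so \(\{a\leq t\}=\{d_e\leq\rho^{-1}(t)\}\). This gives
\[
    F_{\alpha,a}(t)=\nualpha\bigl(\overline B(e,\rho^{-1}(t))\bigr),
\]
where \(\overline B(e,r)=\{d_e\leq r\}\) is the closed ball. Here lies the one subtlety: \(V_\alpha(r)=\nualpha(B(e,r))\) uses the open ball, and the two differ exactly at radii where the sphere \(\{d_e=r\}\) carries mass. Set \(W(r)=\nualpha(\{d_e\leq r\})\). Then \(W\) is the right-continuous version of the nondecreasing function \(V_\alpha\), and \(W\geq V_\alpha\).

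The key claim is that \(\inf\{r:W(r)\geq s\}=\inf\{r:V_\alpha(r)\geq s\}\). Since \(W\geq V_\alpha\), the left infimum is at most the right one. Conversely, suppose \(W(r)\geq s\). For every \(r'>r\) we have \(V_\alpha(r')\geq W(r)\geq s\), so \(V_\alpha^\dagger(s)\leq r\). This gives equality. Because \(\rho\) is increasing and continuous,
\[
    F_{\alpha,a}^\dagger(s)=\inf\{t:W(\rho^{-1}(t))\geq s\}=\rho\Bigl(\inf\{r:W(r)\geq s\}\Bigr)=\rho\bigl(V_\alpha^\dagger(s)\bigr).
\]
Continuity of \(\rho\) is what lets it pass through the infimum. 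The convention \(\inf\varnothing=\infty\) is consistent on both sides, with \(\rho(\infty)=\infty\).

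Integrating this identity over \([0,N]\) turns \(\Phi_{\alpha,a}(N)\) into the right-hand side of \eqref{eq:radial-spectral-filling}. The main obstacle is the open-versus-closed-ball bookkeeping just handled: atoms or charged spheres make \(V_\alpha\) and \(W\) differ. This affects only the values of these functions, never their generalized inverses, so no differentiability of the volume function is needed.
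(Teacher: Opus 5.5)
Your proposal is correct and follows the paper's proof. You identify the generalized inverse of \(r\mapsto\nu_\alpha(\{d_e\leq r\})\) with \(V_\alpha^\dagger\), using the inclusion of the closed ball of radius \(r\) in every open ball of larger radius. You then pass the continuous, strictly increasing \(\rho\) through the infimum and apply Theorem~\ref{thm:occupation-filling}. Your convention \(\rho(\infty)=\infty\) does the job of the paper's remark that the value of the inverse at the terminal endpoint does not affect the integral.
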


\begin{proof}
The generalized inverse of \(r\mapsto\nualpha(\{d_e\leq r\})\)
agrees with \(V_\alpha^{\dagger}\).  Indeed, the closed ball of
radius \(r\) lies in every open ball of radius \(r+\eps\).
Continuity and strict monotonicity of \(\rho\) then give
\[
    F_{\alpha,\rho\circ d_e}^{\dagger}(s)
       =\rho\bigl(V_\alpha^{\dagger}(s)\bigr).
\]
Apply Theorem~\ref{thm:occupation-filling}; the value of the inverse
at a terminal endpoint does not affect its integral.
\end{proof}

\subsection{Matching bounds from kernel moments}
\label{subsec:kernel-moment-spectral-bounds}

The lower filling bound uses only exclusion.  To bound the spectral
cost from above, we quantify the extra energy introduced by averaging
against the reproducing kernel.

For the upper estimates in this and the next subsection, we use the
compact notation
\[
    \mathcal E_{\alpha,a}(\gamma)
       :=\int_X a(x)\langle\gamma k_x^\alpha,k_x^\alpha\rangle_\alpha
                    \,d\nu_\alpha(x),
    \qquad
    S_{\alpha,a}(N)
       :=\sum_{j=1}^n\lambda_{j,\alpha}(a)
                    +\theta\lambda_{n+1,\alpha}(a),
\]
where \(N=n+\theta\), \(n\in\mathbb N\cup\{0\}\), and
\(0\leq\theta<1\).  Thus \(\mathcal E_{\alpha,a}(\gamma)\)
is the total Toeplitz energy of the state \(\gamma\), and
\(S_{\alpha,a}(N)\) is the interpolated sum of the \(N\) lowest
eigenvalues of \(T_a^\alpha\).  We also write
\(\Phi_{\alpha,a}(N)=\int_0^N F_{\alpha,a}^{\dagger}(s)\,ds\)
for the corresponding scalar filling cost.

\begin{proposition}
\label{prop:additive-filling-error}
If, for some finite \(b_\alpha\geq0\),
\[
    \widetilde a^{\,\alpha}(x)\leq a(x)+b_\alpha
       \quad\text{for \(\nualpha\)-almost every }x,
\]
then, for every finite \(0\leq N\leq D_\alpha\),
\[
    \Phi_{\alpha,a}(N)
       \leq S_{\alpha,a}(N)
       \leq \Phi_{\alpha,a}(N)+b_\alpha N.
\]
\end{proposition}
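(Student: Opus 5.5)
The lower bound \(\Phi_{\alpha,a}(N)\leq S_{\alpha,a}(N)\) is already contained in Theorem~\ref{thm:occupation-filling}, so only the upper bound needs proof. The plan is to build a trial state from the classical minimizer of Lemma~\ref{lem:filling-duality}, namely the density \(h_N\) of \eqref{eq:classical-filling-density}, or \(h=0\) or \(h=1\) at the endpoints. We then use the variational characterization \eqref{eq:spectral-filling-duality} of \(S_{\alpha,a}(N)\) as a minimum of \(\mathcal E_{\alpha,a}(\eta)\) over \(0\leq\eta\leq I\) with \(\Tr\eta=N\).

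The trial operator is the Toeplitz operator \(\eta=\Talpha{h_N}\). Since \(0\leq h_N\leq1\) and the Toeplitz map is positive and unital, we have \(0\leq\eta\leq I\). By \eqref{eq:trace-symbol}, \(h_N\in L^1(\nualpha)\) with \(\int h_N\,d\nualpha=N\), so \(\eta\) is trace class with \(\Tr\eta=N\). Its energy is
\[
    \mathcal E_{\alpha,a}(\eta)
      =\int_X a(y)\langle\Talpha{h_N}\kalpha{y},\kalpha{y}\rangle_\alpha\,d\nualpha(y)
      =\int_X\int_X a(y)h_N(x)|\langle\kalpha{x},\kalpha{y}\rangle_\alpha|^2\,d\nualpha(x)\,d\nualpha(y).
\]
Tonelli applies because everything is nonnegative. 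It turns this into \(\int_X h_N(x)\,\widetilde a^{\,\alpha}(x)\,d\nualpha(x)\), using the symmetry of the overlap kernel, i.e.\ that \(Q_\alpha(x,dy)\) is the probability kernel in \(y\). The hypothesis \(\widetilde a^{\,\alpha}\leq a+b_\alpha\) \(\nualpha\)-a.e.\ then gives
\[
    \mathcal E_{\alpha,a}(\eta)\leq\int_X a h_N\,d\nualpha+b_\alpha\int_X h_N\,d\nualpha
      =\Phi_{\alpha,a}(N)+b_\alpha N.
\]
Minimality in \eqref{eq:spectral-filling-duality} yields \(S_{\alpha,a}(N)\leq\mathcal E_{\alpha,a}(\eta)\), which is the claim.

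The main obstacle is a technical one. \(\mathcal E_{\alpha,a}\) was defined as the extended quadratic-form trace, so we must check that the minimum in \eqref{eq:spectral-filling-duality} is taken over all trace-class \(0\leq\eta\leq I\), with no domain restriction. As defined, \(\mathcal E_{\alpha,a}(\eta)=\int a\,h_\eta^\alpha\,d\nualpha\) makes sense for every such \(\eta\), possibly as \(+\infty\). In the proof of Theorem~\ref{thm:occupation-filling} the identity \(\mathcal E=\sum_j\lambda_j b_j\) holds for every \(\eta\): expand \(h_\eta^\alpha\) in the spectral decomposition of \(\eta\) and use \(q_{\alpha,a}(u)=\sum_j\lambda_j|\langle u,v_j\rangle|^2\), valid with value \(\infty\) off the form domain. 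So the inequality \(S\leq\mathcal E(\eta)\) needs no domain check. Here the energy bound shows it is finite anyway.

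The other point to check is the case \(N=D_\alpha<\infty\) (and \(N=0\)). There \(h=1\) gives \(\eta=I\), and the same computation applies. If \(N\) is finite with \(D_\alpha=\infty\), Lemma~\ref{lem:filling-duality} still supplies \(h_N\) for \(0<N<\nualpha(X)\), so every case is covered.
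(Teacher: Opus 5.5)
Your proposal is correct and follows the paper's own argument. It uses the trial state \(\Talpha{h_N}\) built from the scalar filling minimizer, the symmetry/Tonelli identity \(\mathcal E_{\alpha,a}(\Talpha{h_N})=\int_X h_N\,\widetilde a^{\,\alpha}\,d\nualpha\), the pointwise hypothesis, and minimality of \(S_{\alpha,a}(N)\), with the lower bound taken from Theorem~\ref{thm:occupation-filling}; your extra checks on the extended-value energy and the endpoint cases \(N=0\) and \(N=D_\alpha<\infty\) are sound and match the paper's remark that all identities hold as nonnegative extended integrals.
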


\begin{proof}
Let \(h_N\) minimize the scalar filling problem of
Lemma~\ref{lem:filling-duality}.  The Toeplitz operator
\(\gamma_N=T_{h_N}^\alpha\) satisfies
\(0\leq\gamma_N\leq I\) and \(\Tr\gamma_N=N\).
Its occupation function is \(\widetilde{h_N}^{\,\alpha}\).
Symmetry of the kernel and Tonelli's theorem give
\begin{equation}\label{eq:toeplitz-trial-energy}
    \mathcal E_{\alpha,a}(\gamma_N)
       =\int_Xa\,\widetilde{h_N}^{\,\alpha}\,d\nualpha
       =\int_Xh_N\widetilde a^{\,\alpha}\,d\nualpha.
\end{equation}
Consequently,
\[
    S_{\alpha,a}(N)\leq\mathcal E_{\alpha,a}(\gamma_N)
       \leq\Phi_{\alpha,a}(N)+b_\alpha N.
\]
The lower bound is Theorem~\ref{thm:occupation-filling}.
All identities are valid as nonnegative extended integrals.
\end{proof}

For the distance cost, let \(N=n+\theta\) be a real occupation number.
A sufficient condition for an upper bound is the uniform first kernel moment
\begin{equation}\label{eq:uniform-kernel-radius}
    \eps_\alpha
       :=\mathop{\operatorname{ess\,sup}}_{x\in X}
             \int_Xd(x,y)\,Q_\alpha(x,dy)<\infty,
\end{equation}
where the essential supremum is relative to \(\nualpha\).
The triangle inequality gives
\(\widetilde{d_e}^{\,\alpha}\leq d_e+\eps_\alpha\), and hence
\begin{equation}\label{eq:radial-two-sided-sum}
    \int_0^NV_\alpha^{\dagger}(s)\,ds
       \leq S_{\alpha,d_e}(N)
       \leq\int_0^NV_\alpha^{\dagger}(s)\,ds+\eps_\alpha N.
\end{equation}
If a transitive group of isometries preserves \(\nualpha\) and the
kernel \(|\langle\kalpha{x},\kalpha{y}\rangle_\alpha|^2\), the
moment in \eqref{eq:uniform-kernel-radius} is independent of \(x\).
It then equals \(\widetilde{d_e}^{\,\alpha}(e)\).
No such symmetry is needed for \eqref{eq:radial-two-sided-sum}.

These upper bounds are especially natural in homogeneous models.  Suppose a
group acts transitively on \(X\) by isometries, preserves \(\nu_\alpha\),
and carries \(k_x^\alpha\) to \(k_{gx}^\alpha\) up to a phase.  Then
\(Q_\alpha(gx,gE)=Q_\alpha(x,E)\), the volume of a ball is independent
of its center, and the kernel displacement has the same law at every point.
More generally, for \(p>0\),
\[
    \eps_{\alpha,p}^p
       =\int_X d(e,y)^p\,Q_\alpha(e,dy).
\]
The essential supremum in the moment hypothesis is therefore a single
centered moment rather than an additional global assumption.

For the radial cost \(d_e^p\), averaging by the kernel amounts to moving a
point a random distance with this fixed distribution.  The triangle
inequality, followed by Minkowski's inequality when \(p\geq1\), gives the
upper symbol bound used below.  Thus the scalar filling state acquires only a
uniform displacement cost under Toeplitz averaging.  In the Fock, weighted
Bergman, and Gabor models this covariance is built into the geometry.  If the
filled radius tends to infinity, a fixed displacement is lower order.  In the
standard homogeneous semiclassical scalings, where the overlap measures
concentrate at the identity with a controlled \(p\)-th moment, the displacement
moment itself tends to zero.

\begin{corollary}
\label{cor:power-cost-sandwich}
Let \(p>0\), and suppose
\[
    \eps_{\alpha,p}
       :=\mathop{\operatorname{ess\,sup}}_{x\in X}
          \left(\int_Xd(x,y)^p\,Q_\alpha(x,dy)\right)^{1/p}<\infty.
\]
Write \(N=n+\theta\), where \(n\in\mathbb N\cup\{0\}\) and
\(0\leq\theta<1\).  For \(p\geq1\),
\begin{equation}\label{eq:power-cost-sandwich}
    \int_0^N\bigl(V_\alpha^{\dagger}(s)\bigr)^p\,ds
       \leq S_{\alpha,d_e^p}(N)
       \leq\int_0^N
                    \bigl(V_\alpha^{\dagger}(s)+\eps_{\alpha,p}\bigr)^p\,ds.
\end{equation}
For \(0<p<1\), the corresponding estimate is
\begin{equation}\label{eq:fractional-cost-sandwich}
    \int_0^N\bigl(V_\alpha^{\dagger}(s)\bigr)^p\,ds
       \leq S_{\alpha,d_e^p}(N)
       \leq\int_0^N\bigl(V_\alpha^{\dagger}(s)\bigr)^p\,ds
                    +\eps_{\alpha,p}^pN.
\end{equation}
If \(\Halpha\) is infinite dimensional, then, at this fixed parameter,
\begin{equation}\label{eq:fixed-frame-filling-asymptotic}
    S_{\alpha,d_e^p}(N)
       \sim\int_0^N\bigl(V_\alpha^{\dagger}(s)\bigr)^p\,ds
       \qquad(N\to\infty).
\end{equation}
\end{corollary}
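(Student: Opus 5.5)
The lower bounds in both \eqref{eq:power-cost-sandwich} and \eqref{eq:fractional-cost-sandwich} are Corollary~\ref{cor:radial-filling} with \(\rho(r)=r^p\). This choice of \(\rho\) is continuous and strictly increasing, vanishes at \(0\), and tends to infinity. The whole weight of the proof is therefore on the upper bounds, which I would obtain exactly as in Proposition~\ref{prop:additive-filling-error}. Let \(h_N\) be the extremal density \eqref{eq:classical-filling-density} for the cost \(a=d_e^p\), and use the trial state \(\gamma_N=\Talpha{h_N}\). It satisfies \(0\leq\gamma_N\leq I\) and \(\Tr\gamma_N=N\). By \eqref{eq:toeplitz-trial-energy} and the minimization in \eqref{eq:spectral-filling-duality},
\[
    S_{\alpha,d_e^p}(N)\leq\int_Xh_N\,\widetilde{d_e^p}^{\,\alpha}\,d\nualpha .
\]

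The key pointwise estimate is obtained from the triangle inequality \(d(e,y)\leq d_e(x)+d(x,y)\). For \(p\geq1\), Minkowski's inequality in \(L^p(Q_\alpha(x,\cdot))\) gives
\[
    \bigl(\widetilde{d_e^p}^{\,\alpha}(x)\bigr)^{1/p}\leq d_e(x)+\eps_{\alpha,p}
\]
for \(\nualpha\)-a.e.\ \(x\), so that \(\widetilde{d_e^p}^{\,\alpha}\leq(d_e+\eps_{\alpha,p})^p\). For \(0<p<1\), subadditivity \((u+v)^p\leq u^p+v^p\) gives \(\widetilde{d_e^p}^{\,\alpha}\leq d_e^p+\eps_{\alpha,p}^p\). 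In the fractional case Proposition~\ref{prop:additive-filling-error} then applies directly with \(b_\alpha=\eps_{\alpha,p}^p\), and \(\Phi_{\alpha,d_e^p}(N)=\int_0^N(V_\alpha^\dagger)^p\,ds\) by the proof of Corollary~\ref{cor:radial-filling}.

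For \(p\geq1\), I still need the identity
\[
    \int_Xh_N\,(d_e+\eps_{\alpha,p})^p\,d\nualpha=\int_0^N\bigl(V_\alpha^\dagger(s)+\eps_{\alpha,p}\bigr)^p\,ds.
\]
Since \(r\mapsto(r+\eps)^p\) is increasing, \(h_N\) is also a filling density for the cost \(\rho_\eps(d_e)\) with \(\rho_\eps(r)=(r+\eps)^p\). Its layer-cake cost is
\[
    \int_0^N\rho_\eps\bigl(V_\alpha^\dagger(s)\bigr)\,ds .
\]
I would make this rigorous by pushing \(h_N\,d\nualpha\) forward under \(d_e\). The result is the measure on \([0,\infty)\) whose distribution function is \(\min(V_\alpha,N)\), up to the fractional atom at the level \(\tau\). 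Integrating any increasing function against it equals the integral of its composition with \(V_\alpha^\dagger\) over \([0,N]\). The constant-shift issue (\(\rho_\eps(0)\neq0\)) is harmless here, because this integral is taken directly rather than through Corollary~\ref{cor:radial-filling}.

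For the asymptotic \eqref{eq:fixed-frame-filling-asymptotic}, set \(\Phi(N)=\int_0^N(V_\alpha^\dagger)^p\). Infinite dimension means \(\nualpha(X)=\infty\), so \(V_\alpha(r)<\infty\) for every \(r\) by properness and Radon, and hence \(V_\alpha^\dagger(s)\to\infty\) as \(s\to\infty\). For \(0<p<1\) the error term \(\eps^pN\) is \(o(\Phi(N))\), because \(\Phi(N)/N\to\infty\) (a Cesàro mean of a function tending to infinity). For \(p\geq1\), I would use \((r+\eps)^p\leq(1+\delta)r^p\) for \(r\geq R_\delta\). Splitting the integral at the finite value \(s_\delta=V_\alpha(R_\delta)\) gives
\[
    \int_0^N(V_\alpha^\dagger+\eps)^p\leq C_\delta+(1+\delta)\Phi(N),
\]
and since \(\Phi(N)\to\infty\) the ratio tends to one. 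I expect the main obstacle to be the pushforward identity for the shifted cost in the presence of atoms and plateaus of \(V_\alpha\). Plateaus are harmless because \(V_\alpha^\dagger\) jumps across them, and atoms are handled by the fractional \(\theta\).
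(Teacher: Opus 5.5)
Your proposal is correct and follows the paper's proof. The lower bounds come from Corollary~\ref{cor:radial-filling}. The upper bounds come from the Toeplitz trial state \(\Talpha{h_N}\), combined with Minkowski's inequality for \(p\geq1\), or with subadditivity plus Proposition~\ref{prop:additive-filling-error} for \(0<p<1\). The asymptotic comes from \(V_\alpha^\dagger\to\infty\) with a fixed cutoff. Your quantile/pushforward justification of \(\int_X h_N(d_e+\eps_{\alpha,p})^p\,d\nualpha=\int_0^N(V_\alpha^\dagger+\eps_{\alpha,p})^p\,ds\) and the explicit split at \(V_\alpha(R_\delta)\) simply spell out steps the paper states in one line each.
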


\begin{proof}
For \(p\geq1\), Minkowski's inequality gives
\[
    \bigl(\widetilde{d_e^p}^{\,\alpha}(x)\bigr)^{1/p}
       \leq d_e(x)+\eps_{\alpha,p}
\]
for almost every \(x\).  Choose the scalar filling density for
\(d_e^p\).  It fills the smallest radii first, so
\eqref{eq:toeplitz-trial-energy} bounds its Toeplitz trial energy by
the last integral in \eqref{eq:power-cost-sandwich}.
The lower bound is Corollary~\ref{cor:radial-filling}.
For \(0<p<1\), use
\(d_e(y)^p\leq d_e(x)^p+d(x,y)^p\), followed by
Proposition~\ref{prop:additive-filling-error}.

In infinite dimension, \(\nualpha(X)=\infty\), whereas every ball
has finite measure.  Thus \(V_\alpha^{\dagger}(s)\to\infty\).
The ratio
\((V_\alpha^{\dagger}(s)+\eps_{\alpha,p})^p/
 (V_\alpha^{\dagger}(s))^p\) tends to one.  Integrating after any
fixed lower cutoff, and observing that the contribution before that
cutoff is negligible, proves
\eqref{eq:fixed-frame-filling-asymptotic} for \(p\geq1\).
For \(0<p<1\), the same conclusion follows from
\eqref{eq:fractional-cost-sandwich}, since
\(N^{-1}\int_0^N(V_\alpha^{\dagger}(s))^p\,ds\to\infty\).
\end{proof}

The uniform moment assumptions above serve a different purpose from
\eqref{eq:kernel-moment-tail}: they compare the entire spectrum with
volume at one fixed parameter.  They need not hold in every model.
The lower occupation bounds remain available without them.
Fractional powers are useful when the first kernel moment is infinite
but a smaller positive moment is finite.

\subsection{Polynomial and exponential volume growth}

\begin{corollary}\label{cor:volume-growth-bounds}
Suppose that, for all \(r>0\),
\[
    V_\alpha(r)\leq A_\alpha r^D,
    \qquad A_\alpha>0,\quad D>0.
\]
For every \(p>0\) for which the radial form is densely defined, and
\(N=n+\theta\) with \(n\in\mathbb N\cup\{0\}\) and \(0\leq\theta<1\),
\begin{equation}\label{eq:polynomial-spectral-bound}
    S_{\alpha,d_e^p}(N)
       \geq\frac{D}{D+p}\,A_\alpha^{-p/D}N^{1+p/D}.
\end{equation}
If instead
\(V_\alpha(r)\leq A_\alpha(e^{\beta r}-1)\), with \(\beta>0\),
then
\[
    S_{\alpha,d_e}(N)
       \geq\frac1\beta
          \left[(N+A_\alpha)\log\left(1+\frac{N}{A_\alpha}\right)-N\right].
\]
The same lower bounds hold with the eigenvalue sums replaced by
\(\mathcal E_{\alpha,d_e^p}(\gamma)\) and
\(\mathcal E_{\alpha,d_e}(\gamma)\), respectively, whenever
\(0\leq\gamma\leq I\) and \(\Tr\gamma=N\).
\end{corollary}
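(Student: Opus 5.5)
We reduce everything to Corollary~\ref{cor:radial-filling} with \(\rho(r)=r^p\) (respectively \(\rho(r)=r\)). By that corollary and Theorem~\ref{thm:occupation-filling}, both \(S_{\alpha,d_e^p}(N)\) and \(\mathcal E_{\alpha,d_e^p}(\gamma)\) for \(0\leq\gamma\leq I\), \(\Tr\gamma=N\), are bounded below by \(\int_0^N(V_\alpha^\dagger(s))^p\,ds\). This covers the final assertion about states \(\gamma\) at no extra cost. It therefore suffices to bound the generalized inverse \(V_\alpha^\dagger\) from below pointwise using the volume hypothesis, and then integrate explicitly.

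The pointwise step goes as follows. Fix \(s>0\) and take \(r\geq0\) with \(V_\alpha(r)\geq s\). In the polynomial case this gives \(A_\alpha r^D\geq s\), so \(r\geq(s/A_\alpha)^{1/D}\). For \(r=0\) we should note that \(V_\alpha(0)=\nualpha(B(e,0))=0\) with open balls, so the inequality is only needed for \(r>0\). Taking the infimum gives \(V_\alpha^\dagger(s)\geq(s/A_\alpha)^{1/D}\). In the exponential case, \(A_\alpha(e^{\beta r}-1)\geq s\) gives \(V_\alpha^\dagger(s)\geq\beta^{-1}\log(1+s/A_\alpha)\). The only subtlety is the convention \(\inf\varnothing=+\infty\). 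It arises when \(s\) exceeds \(\nualpha(X)\), but \(N\leq D_\alpha\), so this affects at most an endpoint, and it only strengthens the bound anyway.

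It remains to integrate. In the polynomial case,
\[
\int_0^N A_\alpha^{-p/D}s^{p/D}\,ds=\frac{D}{D+p}A_\alpha^{-p/D}N^{1+p/D}.
\]
In the exponential case, substitute \(u=1+s/A_\alpha\):
\[
\frac{1}{\beta}\int_0^N\log\!\Bigl(1+\frac{s}{A_\alpha}\Bigr)ds
=\frac{A_\alpha}{\beta}\bigl[u\log u-u\bigr]_1^{1+N/A_\alpha}
=\frac1\beta\Bigl[(N+A_\alpha)\log\Bigl(1+\frac N{A_\alpha}\Bigr)-N\Bigr].
\]

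There is no serious obstacle here. The only point requiring care is checking that Corollary~\ref{cor:radial-filling} applies: its hypotheses are that \(\rho\) is continuous, strictly increasing, with \(\rho(0)=0\) and \(\rho\to\infty\), and that the form is densely defined, which is assumed in the statement. The monotonicity of \(\rho\) is what lets the pointwise lower bound on \(V_\alpha^\dagger\) pass through \(\rho\) inside the integral.
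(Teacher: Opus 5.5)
Your proposal is correct and follows the same route as the paper. The paper likewise derives the pointwise lower bounds on \(V_\alpha^\dagger\) from the volume hypotheses, integrates them in \eqref{eq:radial-spectral-filling}, and obtains the density-matrix version from \eqref{eq:occupation-filling-chain}; your remarks about \(V_\alpha(0)=0\) for open balls and the convention \(\inf\varnothing=+\infty\) are accurate details that the paper leaves implicit.
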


\begin{proof}
The polynomial bound implies
\(V_\alpha^{\dagger}(s)\geq(s/A_\alpha)^{1/D}\).
Integrate its \(p\)-th power in
\eqref{eq:radial-spectral-filling}.  In the exponential case use
\(V_\alpha^{\dagger}(s)\geq\beta^{-1}\log(1+s/A_\alpha)\)
and integrate.  The density-matrix statements follow from
\eqref{eq:occupation-filling-chain}.
\end{proof}

In the common scaling \(A_\alpha=Cc(\alpha)\), writing
\(N=n+\theta\) as above, \eqref{eq:polynomial-spectral-bound} gives
\[
    \frac{S_{\alpha,d_e^p}(N)}{N}
       \geq\frac{D}{D+p}\,C^{-p/D}
                \left(\frac{N}{c(\alpha)}\right)^{p/D}.
\]
Thus the relevant density is \(N/c(\alpha)\).
For exponential growth, the lower bound on the average distance is
\(\beta^{-1}\log(N/A_\alpha)+O(1)\) as \(N/A_\alpha\to\infty\).
When the corresponding uniform kernel moment is finite, these rates
are sharp whenever the volume has the matching asymptotic behavior.
For example, at a fixed parameter, if
\(V_\alpha(r)\sim A_\alpha r^D\), then
Corollary~\ref{cor:power-cost-sandwich} gives
\[
    S_{\alpha,d_e^p}(N)
       \sim\frac{D}{D+p}\,A_\alpha^{-p/D}N^{1+p/D},\qquad p>0.
\]
If \(V_\alpha(r)\sim A_\alpha e^{\beta r}\) and
\(\eps_\alpha<\infty\), then
\(
  S_{\alpha,d_e}(N)
  \sim\beta^{-1}N\log N
\).

\subsection{Semiclassical spectral filling}
\label{subsec:semiclassical-filling}

We now let \(\alpha\to\infty\).  For a continuous nonnegative
confining symbol \(a\), write
\[
\begin{gathered}
    F_a(t)=\nu(\{a\leq t\}),\qquad
    F_a^{\dagger}(s)=\inf\{t\geq0:F_a(t)\geq s\},\\
    \Phi_a(\varrho)=\int_0^{\varrho}F_a^{\dagger}(s)\,ds.
\end{gathered}
\]
The symbol \(\varrho\) denotes the limiting number of occupied states
per unit of \(c(\alpha)\); it is distinct from the radial profile
\(\rho\).

\begin{theorem}
\label{thm:semiclassical-filling}
Suppose that the forms \(q_{\alpha,a}\) are densely defined and
\(m_\alpha\rightharpoonup\nu\) as in
\eqref{eq:measure-convergence}.  Let \(0\leq N_\alpha\leq D_\alpha\)
be finite, write \(N_\alpha=n_\alpha+\theta_\alpha\) with
\(n_\alpha\in\mathbb N\cup\{0\}\) and \(0\leq\theta_\alpha<1\), and
assume \(N_\alpha/c(\alpha)\to\varrho\in[0,\infty)\).
Then
\[
    \liminf_{\alpha\to\infty}
       \frac{1}{c(\alpha)}
       \left(\sum_{j=1}^{n_\alpha}\lambda_{j,\alpha}(a)
              +\theta_\alpha\lambda_{n_\alpha+1,\alpha}(a)\right)
       \geq\int_0^\varrho F_a^{\dagger}(s)\,ds.
\]
This lower bound needs neither averaged localization nor a kernel
moment hypothesis.

If, in addition, \eqref{eq:averaged-localization} and
\eqref{eq:kernel-moment-tail} hold and
\(0<\varrho<\nu(X)\), then
\begin{equation}\label{eq:semiclassical-filling-limit}
    \frac{1}{c(\alpha)}
       \left(\sum_{j=1}^{n_\alpha}\lambda_{j,\alpha}(a)
              +\theta_\alpha\lambda_{n_\alpha+1,\alpha}(a)\right)
       \longrightarrow\int_0^\varrho F_a^{\dagger}(s)\,ds.
\end{equation}
The total limiting volume \(\nu(X)\) may be finite or infinite,
and the distribution of \(a\) may have atoms.
\end{theorem}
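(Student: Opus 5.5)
We prove both assertions through the Riesz-mean duality \eqref{eq:spectral-filling-duality} and its scalar counterpart in Lemma~\ref{lem:filling-duality}. The plan is to bound \(S_{\alpha,a}(N_\alpha)=\sup_{t\geq0}\{N_\alpha t-\Tr(t-T_a^\alpha)_+\}\) and compare it with
\[
    \Phi_a(\varrho)=\sup_{t\geq0}\Bigl\{\varrho t-\int_X(t-a)_+\,d\nu\Bigr\}.
\]
The identity \(\Phi_a(\varrho)=\int_0^\infty(\varrho-F_a(t))_+\,dt=\sup_t\{\varrho t-\int_0^tF_a\}\) follows from the same layer-cake argument as in Lemma~\ref{lem:filling-duality}, now applied to \(\nu\). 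That argument uses nothing about frames.

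\textbf{Lower bound.} Fix \(t\geq0\). The upper bound in Proposition~\ref{prop:riesz-sandwich} needs no convergence hypothesis and gives
\[
    \frac{S_{\alpha,a}(N_\alpha)}{c(\alpha)}
       \geq\frac{N_\alpha}{c(\alpha)}\,t-\int_X(t-a)_+\,dm_\alpha .
\]
Since \(a\) is continuous with compact sublevel sets, \((t-a)_+\in C_c(X)\). Vague convergence \eqref{eq:measure-convergence} therefore sends the right side to \(\varrho t-\int_X(t-a)_+\,d\nu\). Take the liminf and then the supremum over \(t\). This yields the lower bound \(\Phi_a(\varrho)\), including the value \(+\infty\). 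The case \(t=0\), and the case \(\varrho=0\), are trivial.

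\textbf{Upper bound.} Now assume \eqref{eq:averaged-localization}, \eqref{eq:kernel-moment-tail}, and \(0<\varrho<\nu(X)\). The duality has an inf--sup structure that does not pass to limits directly, so we use a trial state instead. Because \(\varrho<\nu(X)\), we have \(\tau=F_a^\dagger(\varrho)<\infty\). Fix \(T>\tau\) with \(\nu(\{a=T\})=0\), and pick \(\chi\in C_c(X)\) with \(0\leq\chi\leq1\). The plan is to choose \(\chi\) as a continuous approximation of the scalar filling density \(h_\varrho\) of Lemma~\ref{lem:filling-duality}, supported in \(\{a\leq T\}\), with two properties:
\[
    \int\chi\,d\nu>\varrho
    \qquad\text{and}\qquad
    \int a\chi\,d\nu\leq\Phi_a\Bigl(\int\chi\,d\nu\Bigr)+\delta .
\]
Such a \(\chi\) exists by Urysohn and regularity of \(\nu\) on the compact set \(\{a\leq T\}\). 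A fractional value on the atom level \(\{a=\tau\}\) is permitted by taking \(\chi=\theta\) on a neighborhood of it.

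Set \(\gamma=\kappa_\alpha T_\chi^\alpha\) with \(\kappa_\alpha=N_\alpha/\Tr T_\chi^\alpha\). By Lemma~\ref{lem:frame-calculus}, \(\Tr T_\chi^\alpha=c(\alpha)\int\chi\,dm_\alpha\), and this quantity is eventually larger than \(N_\alpha\). Hence \(\kappa_\alpha\leq1\), so \(0\leq\gamma\leq I\) and \(\Tr\gamma=N_\alpha\). Theorem~\ref{thm:occupation-filling} and \eqref{eq:toeplitz-trial-energy} then give
\[
    \frac{S_{\alpha,a}(N_\alpha)}{c(\alpha)}
       \leq\kappa_\alpha\int_X\chi\,\widetilde a^{\,\alpha}\,dm_\alpha .
\]
Lemma~\ref{lem:unbounded-berezin} on \(K=\supp\chi\) replaces \(\widetilde a^{\,\alpha}\) by \(a\) up to an error \(o(1)\). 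Vague convergence applied to \(a\chi\in C_c\) and to \(\chi\) then shows that the limsup is at most
\[
    \frac{\varrho}{\int\chi\,d\nu}\int a\chi\,d\nu .
\]

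\textbf{Expected obstacle.} The hard step is making this last quantity arbitrarily close to \(\Phi_a(\varrho)\), given the scaling factor and the possible atoms of the law of \(a\). I would take \(\chi\) with \(\int\chi\,d\nu=\varrho+\eta\) and cost at most \(\Phi_a(\varrho+\eta)+\delta\). Continuity of \(\Phi_a\) at \(\varrho\) holds because \(F_a^\dagger\) is locally bounded below \(\nu(X)\). Since \(\varrho/(\varrho+\eta)\to1\) as \(\eta\to0\), letting \(\eta,\delta\to0\) closes the argument. The uniform-continuity approximation of \(\mathbf1_{\{a<\tau\}}\) from inside by compactly supported continuous functions is routine by inner regularity, but it needs care at the level set \(\{a=\tau\}\).
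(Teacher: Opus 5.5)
Your proof is correct. The lower bound is the paper's argument. The upper bound takes a genuinely different route.

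\textbf{How the paper gets the upper bound.} The paper does not build a trial state in this proof. It invokes Theorem~\ref{thm:riesz-limit} to get $r_\alpha(t)=R_{\alpha,a}(t)/c(\alpha)\to G(t)$ for every $t$. Since these are nondecreasing functions with a continuous limit, the convergence is uniform on compact $t$-intervals. It then uses convexity of $r_\alpha$ and $\varrho<\nu(X)$ (choosing $L$ with $G(L+1)-G(L)>\varrho$) to confine the supremum in \eqref{eq:spectral-filling-duality} to a fixed interval $[0,L+1]$. On that interval the Legendre transforms converge.

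\textbf{How you get it.} You bound $S_{\alpha,a}(N_\alpha)$ variationally, by the energy of $\kappa_\alpha T^\alpha_\chi$ with $\chi\in C_c(X)$ approximating the classical filling density. This is the mechanism of Proposition~\ref{prop:additive-filling-error}, except that Lemma~\ref{lem:unbounded-berezin} and vague convergence replace the uniform additive symbol bound, and the rescaling $\kappa_\alpha\le1$ fixes the trace.

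\textbf{What each route buys.} Yours bypasses the Riesz-mean limit and the convexity/uniform-convergence step, and it exhibits explicit near-optimal states. The price is an approximation step and continuity of $\Phi_a$ at $\varrho$; you handle both correctly.
\begin{itemize}
\item The cleanest way to build $\chi$ is $\chi=\psi\circ a$, with $\psi$ continuous and piecewise linear, equal to $1$ below $\tau'-\epsilon$, to $\theta$ at $\tau'$, and to $0$ above $\tau'+\epsilon$.
\item Dominated convergence for the law of $a$ under $\nu$ on the compact range $[0,\tau'+1]$ then gives both the mass and the cost of the filling density. This replaces your phrase ``$\chi=\theta$ on a neighborhood.''
\item The auxiliary level $T$ with $\nu(\{a=T\})=0$ plays no role and can be dropped.
\end{itemize}
The paper's route, in turn, gets both bounds from the single Riesz-mean statement that also drives Corollaries~\ref{cor:counting-limit} and~\ref{cor:eigenvalue-quantile-limit}. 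The two proofs rest on the same inputs, since the lower half of Theorem~\ref{thm:riesz-limit} is itself a coherent-state Jensen argument driven by Lemma~\ref{lem:unbounded-berezin}.
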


\begin{proof}
Put \(\varrho_\alpha=N_\alpha/c(\alpha)\) and
\[
    G_\alpha(t)=\int_X(t-a)_+\,dm_\alpha,
    \qquad G(t)=\int_X(t-a)_+\,d\nu.
\]
For each fixed \(t\geq0\), the integrand is continuous and compactly
supported, so \(G_\alpha(t)\to G(t)\).  By the Riesz upper bound
and \eqref{eq:spectral-filling-duality},
\[
    \frac{S_{\alpha,a}(N_\alpha)}{c(\alpha)}
       \geq\varrho_\alpha t-G_\alpha(t).
\]
Take the lower limit and then the supremum over \(t\geq0\).
Lemma~\ref{lem:filling-duality}, applied to \(\nu\), identifies the
result as \(\Phi_a(\varrho)\).  If \(\varrho>\nu(X)\), both
this supremum and \(\Phi_a(\varrho)\) are infinite; the same
argument still applies.

For the matching limit set
\(r_\alpha(t)=R_{\alpha,a}(t)/c(\alpha)\).
Theorem~\ref{thm:riesz-limit} gives \(r_\alpha(t)\to G(t)\)
for every \(t\geq0\).  These are nondecreasing functions of \(t\)
with continuous limit, so the convergence is uniform on every compact
interval.  Since \(F_a(t)\uparrow\nu(X)>\varrho\), choose
\(L>0\) such that
\[
    G(L+1)-G(L)=\int_L^{L+1}F_a(u)\,du>\varrho.
\]
For all sufficiently large \(\alpha\),
\(r_\alpha(L+1)-r_\alpha(L)>\varrho_\alpha\).
Convexity implies, for \(t\geq L+1\),
\[
    r_\alpha(t)\geq r_\alpha(L+1)
       +(t-L-1)\bigl(r_\alpha(L+1)-r_\alpha(L)\bigr).
\]
Consequently the supremum of
\(\varrho_\alpha t-r_\alpha(t)\) is attained in \([0,L+1]\).
The same is true of \(\varrho t-G(t)\).  Uniform convergence on
this interval and \eqref{eq:spectral-filling-duality} now give
\[
    \frac{S_{\alpha,a}(N_\alpha)}{c(\alpha)}
       \longrightarrow\sup_{t\geq0}\{\varrho t-G(t)\}
       =\Phi_a(\varrho).
\]
\end{proof}

For the distance symbol this becomes
\[
    \frac1{c(\alpha)}\sum_{j=1}^{N_\alpha}\lambda_{j,\alpha}(d_e)
       \longrightarrow\int_0^{\varrho}V^{\dagger}(s)\,ds,
    \qquad V(r)=\nu(B(e,r)),
\]
when \(N_\alpha\) is integer and the full hypotheses of the theorem
hold.  In contrast to a counting limit at a prescribed energy, this
integrated statement requires no condition on the mass of a sphere.

\begin{corollary}
\label{cor:eigenvalue-quantile-limit}
Assume the hypotheses of
\eqref{eq:semiclassical-filling-limit}, with integer \(N_\alpha\).
Suppose that some \(t_\varrho\geq0\) satisfies
\begin{equation}\label{eq:strict-quantile-crossing}
    F_a(t_\varrho-\delta)<\varrho<F_a(t_\varrho+\delta)
       \qquad\text{for every }\delta>0,
\end{equation}
where \(F_a(t)=0\) for \(t<0\).  Then
\[
    \lambda_{N_\alpha,\alpha}(a)\longrightarrow t_\varrho.
\]
\end{corollary}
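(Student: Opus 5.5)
The idea is to read the eigenvalue $\lambda_{N_\alpha,\alpha}(a)$ as a difference quotient of the filling sums, and to pass to the limit with \eqref{eq:semiclassical-filling-limit} applied at neighbouring occupation levels. Write $S_\alpha(M)=S_{\alpha,a}(M)$ for real $M$. By \eqref{eq:spectral-filling-function}, $S_\alpha$ is piecewise linear, with slope $\lambda_{j,\alpha}(a)$ on $[j-1,j]$. Since the eigenvalues are nondecreasing, $S_\alpha$ is convex. Fix a small $\eta>0$ with $0<\varrho-\eta$ and $\varrho+\eta<\nu(X)$; this is possible because $0<\varrho<\nu(X)$. Put $M_\alpha^\pm=N_\alpha\pm\eta c(\alpha)$; for large $\alpha$ these lie in $[0,D_\alpha]$. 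Convexity then gives
\[
    \frac{S_\alpha(N_\alpha)-S_\alpha(M_\alpha^-)}{\eta c(\alpha)}
      \leq\lambda_{N_\alpha,\alpha}(a)
      \leq\lambda_{N_\alpha+1,\alpha}(a)
      \leq\frac{S_\alpha(M_\alpha^+)-S_\alpha(N_\alpha)}{\eta c(\alpha)},
\]
because the slope on $[N_\alpha-1,N_\alpha]$ dominates every chord slope to its left, and the slope on $[N_\alpha,N_\alpha+1]$ is dominated by chords to its right.

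Next, apply Theorem~\ref{thm:semiclassical-filling} to the three sequences $N_\alpha$ and $M_\alpha^\pm$. Their normalized limits are $\varrho$ and $\varrho\pm\eta$, all in $(0,\nu(X))$. The theorem is stated for real occupation numbers $N_\alpha=n_\alpha+\theta_\alpha$, so $M_\alpha^\pm$ need not be integers. This yields
\[
    \frac{\Phi_a(\varrho)-\Phi_a(\varrho-\eta)}{\eta}
      \leq\liminf_\alpha\lambda_{N_\alpha,\alpha}(a)
      \leq\limsup_\alpha\lambda_{N_\alpha,\alpha}(a)
      \leq\frac{\Phi_a(\varrho+\eta)-\Phi_a(\varrho)}{\eta}.
\]
Letting $\eta\downarrow0$, the outer quotients tend to the left derivative $F_a^\dagger(\varrho-)$ and the right derivative $F_a^\dagger(\varrho+)$, since $\Phi_a'=F_a^\dagger$ is monotone. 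The inner bound uses $\lambda_{N_\alpha}\leq\lambda_{N_\alpha+1}$.

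The remaining step, and the one needing care, is to show that \eqref{eq:strict-quantile-crossing} forces $F_a^\dagger(\varrho-)=F_a^\dagger(\varrho+)=t_\varrho$. Since $F_a^\dagger$ is nondecreasing, it suffices to show $F_a^\dagger(s)\geq t_\varrho-\delta$ for $s<\varrho$ close to $\varrho$, and $F_a^\dagger(s)\leq t_\varrho+\delta$ for $s>\varrho$ close to $\varrho$.

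For the upper side, $F_a(t_\varrho+\delta)>\varrho$. Hence every $s\in(\varrho,F_a(t_\varrho+\delta)]$ satisfies $F_a^\dagger(s)\leq t_\varrho+\delta$.

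For the lower side, $F_a(t_\varrho-\delta)<\varrho$. Hence for $s\in(F_a(t_\varrho-\delta),\varrho)$ we have $F_a(t)<s$ for all $t\leq t_\varrho-\delta$, by monotonicity of $F_a$, so $F_a^\dagger(s)\geq t_\varrho-\delta$. When $t_\varrho=0$ this lower bound is trivial, and the convention $F_a=0$ on $(-\infty,0)$ makes the hypothesis consistent.

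Since $\delta$ is arbitrary, both one-sided limits equal $t_\varrho$, and hence $\lambda_{N_\alpha,\alpha}(a)\to t_\varrho$.
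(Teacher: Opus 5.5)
Your proof is correct, and it takes a genuinely different route: you work in the occupation variable, while the paper works in the energy variable.

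\textbf{The paper's route.} It picks positive continuity points \(t_-<t_\varrho<t_+\) of \(F_a\). Corollary~\ref{cor:counting-limit} then gives \(\Tr\mathbf 1_{[0,t_-]}(T_a^\alpha)<N_\alpha<\Tr\mathbf 1_{[0,t_+]}(T_a^\alpha)\) for large \(\alpha\), from which \(t_-<\lambda_{N_\alpha,\alpha}(a)\leq t_+\) follows. The case \(t_\varrho=0\) is handled separately by nonnegativity.

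\textbf{Your route.} You use that \(S_{\alpha,a}\) is convex with slopes \(\lambda_{j,\alpha}(a)\), so chords of length \(\eta c(\alpha)\) on either side of \(N_\alpha\) bracket \(\lambda_{N_\alpha,\alpha}(a)\). Theorem~\ref{thm:semiclassical-filling} at the densities \(\varrho\pm\eta\) turns these chords into difference quotients of \(\Phi_a\). Their limits as \(\eta\downarrow0\) are the one-sided derivatives \(F_a^\dagger(\varrho-)\) and \(F_a^\dagger(\varrho+)\). In effect you differentiate the Legendre dual of the Riesz mean in \(N\), where the paper differentiates the Riesz mean in \(t\).

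\textbf{What your route buys.} You need no continuity points of \(F_a\) and no separate case \(t_\varrho=0\). It also shows transparently that, even without the crossing hypothesis, \(F_a^\dagger(\varrho-)\leq\liminf\lambda_{N_\alpha,\alpha}(a)\leq\limsup\lambda_{N_\alpha,\alpha}(a)\leq F_a^\dagger(\varrho+)\). Condition \eqref{eq:strict-quantile-crossing} is then exactly continuity of \(F_a^\dagger\) at \(\varrho\) with value \(t_\varrho\).

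\textbf{What it costs.} You must invoke the filling limit along the auxiliary sequences \(M_\alpha^\pm\). This needs \(\varrho\) to lie in the open interval \((0,\nu(X))\), which it does, and it needs \(M_\alpha^+\leq D_\alpha\) eventually. You assert the latter without proof. It follows from
\(D_\alpha/c(\alpha)=m_\alpha(X)\geq\int\chi\,dm_\alpha\to\int\chi\,d\nu\),
choosing \(0\leq\chi\leq1\) in \(C_c(X)\) with \(\int\chi\,d\nu>\varrho+\eta\); this is possible since \(\nu(X)>\varrho+\eta\). The paper's argument uses only the single sequence \(N_\alpha\) and a pointwise counting limit, so it is shorter once Corollary~\ref{cor:counting-limit} is available.
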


\begin{proof}
Choose positive continuity points \(t_-<t_\varrho<t_+\) of
\(F_a\), arbitrarily close to \(t_\varrho\).
Condition~\eqref{eq:strict-quantile-crossing} and
Corollary~\ref{cor:counting-limit} imply, for large \(\alpha\),
\[
    \Tr\mathbf1_{[0,t_-]}(T_a^\alpha)<N_\alpha
       <\Tr\mathbf1_{[0,t_+]}(T_a^\alpha).
\]
Thus \(t_-<\lambda_{N_\alpha,\alpha}(a)\leq t_+\).
When \(t_\varrho=0\), use nonnegativity in place of the lower
counting bound.  Let the comparison points approach \(t_\varrho\).
\end{proof}

At full limiting occupation, a separate endpoint argument is needed.
For compact \(X\), it is immediate: if
\(N_\alpha/c(\alpha)\to\nu(X)\), then
\(D_\alpha/c(\alpha)=m_\alpha(X)\to\nu(X)\), and
\[
\begin{split}
    0&\leq\Tr T_a^\alpha-S_{\alpha,a}(N_\alpha)
       \leq\|a\|_\infty(D_\alpha-N_\alpha),\\
    \frac1{c(\alpha)}\Tr T_a^\alpha
       &=\int_Xa\,dm_\alpha\longrightarrow\int_Xa\,d\nu.
\end{split}
\]
Hence \eqref{eq:semiclassical-filling-limit} also holds at
\(\varrho=\nu(X)\) in that setting.  On a noncompact space,
weak convergence alone does not control the energy carried by mass
escaping to infinity.

We have obtained both the exact price of the scalar filling problem
and conditions under which the Toeplitz spectrum attains that price
asymptotically.  The uncertainty principles in the next section follow
by bounding the energy of a family from above using its centers,
dispersions, or a common envelope, and comparing with these lower bounds.

% -----------------------------------------------------------------------------
\section{Collective uncertainty principles}
\label{sec:collective-uncertainty}

A localization hypothesis limits where a state can spend its mass.
Orthogonality limits how much mass the whole family can spend there.
Comparing these two constraints gives the uncertainty principles in this
section.  Moments lead to mean-dispersion estimates; common envelopes
lead to cardinality bounds; and the averaged phase-space measure records
the resulting obstruction in Wasserstein distance.

We work on a proper metric space and use the notation of
Section~\ref{sec:radial-spectra}.  The inequalities at a fixed parameter
use only the normalized Parseval frame and its actual measure
\(\nualpha\).  They do not require measure convergence, kernel
localization, or a densely defined unbounded Toeplitz operator.
When the relevant form is densely defined, the spectral sums of
Section~\ref{sec:radial-spectra} give the stated refinements.

\subsection{Centers and dispersions}

For \(1\leq p<\infty\), let \(\mathcal P_p(X)\) denote the probability
measures with finite \(p\)-th moment.  For \(\eta\in\mathcal P_p(X)\), put
\[
    M_p(\eta;y)=\left(\int_Xd(x,y)^p\,d\eta(x)\right)^{1/p},
    \qquad \Delta_p(\eta)=\inf_{y\in X}M_p(\eta;y).
\]
Recall that a unit vector \(f\in\Halpha\) determines the probability
measure
\[
    d\mu_f^\alpha(x)
       =|\langle f,k_x^\alpha\rangle_\alpha|^2\,d\nu_\alpha(x).
\]
If \(\mu_f^\alpha\in\mathcal P_p(X)\), choose
\[
    \mu_{\alpha,p}(f)\in\argmin_{y\in X}M_p(\mu_f^\alpha;y),
    \qquad
    \Delta_{\alpha,p}(f)=\Delta_p(\mu_f^\alpha).
\]
For \(p=1\), these are the median and dispersion
\(\mu_\alpha(f)\) and \(\Delta_\alpha(f)\) used in the introduction.

\begin{lemma}
\label{lem:moment-centers}
Every \(\eta\in\mathcal P_p(X)\) has a minimizing center.  Moreover,
for any \(y,z\in X\),
\[
    |M_p(\eta;y)-M_p(\eta;z)|\leq d(y,z).
\]
In particular, for every unit vector with finite \(p\)-th moment,
\[
    \left(q_{\alpha,d_e^p}(f)\right)^{1/p}
       \leq\Delta_{\alpha,p}(f)+d(e,\mu_{\alpha,p}(f)).
\]
\end{lemma}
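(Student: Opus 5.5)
The proof has three parts: the Lipschitz estimate, existence of a minimizer, and the energy bound.

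\emph{Lipschitz estimate.} I would first show that \(M_p(\eta;\cdot)\) is finite everywhere on \(X\) and \(1\)-Lipschitz. Fix \(y,z\in X\). The triangle inequality gives \(d(x,y)\leq d(x,z)+d(y,z)\) for every \(x\). Taking \(L^p(\eta)\) norms and applying Minkowski's inequality (valid since \(p\geq1\), and the constant \(d(y,z)\) has \(L^p(\eta)\) norm \(d(y,z)\) because \(\eta\) is a probability measure) yields \(M_p(\eta;y)\leq M_p(\eta;z)+d(y,z)\). In particular, finiteness of the \(p\)-th moment at one point propagates to every point. Exchanging \(y\) and \(z\) gives the stated Lipschitz bound. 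So \(M_p(\eta;\cdot)\) is a finite continuous function on \(X\).

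\emph{Existence of a minimizing center.} This step needs a coercivity argument, and it is where properness of \(X\) enters. Fix any \(z\). Since \(\eta\) is a probability measure, choose \(R\) so that \(\eta(B(z,R))\geq1/2\). If \(d(y,z)>R+2^{1/p}M_p(\eta;z)\), then every \(x\) in \(B(z,R)\) satisfies \(d(x,y)\geq d(y,z)-R\). Hence
\[
M_p(\eta;y)\geq 2^{-1/p}\bigl(d(y,z)-R\bigr)>M_p(\eta;z).
\]
So the infimum of \(M_p(\eta;\cdot)\) is the same as its infimum over the closed ball of radius \(R+2^{1/p}M_p(\eta;z)\) about \(z\). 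That ball is compact because \(X\) is proper. A continuous function attains its minimum there, which gives a minimizing center. This coercivity step is the only point needing care. The properness assumption is exactly what makes it work, and the Lipschitz bound already supplies the continuity.

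\emph{Energy bound.} For a unit vector \(f\) with \(\mu_f^\alpha\in\mathcal P_p(X)\), the definition \eqref{eq:unbounded-toeplitz-form} with \(a=d_e^p\) gives
\[
q_{\alpha,d_e^p}(f)=\int_X d(x,e)^p\,d\mu_f^\alpha(x)=M_p(\mu_f^\alpha;e)^p.
\]
Apply the Lipschitz bound with \(y=e\) and \(z=\mu_{\alpha,p}(f)\). This gives
\[
M_p(\mu_f^\alpha;e)\leq M_p\bigl(\mu_f^\alpha;\mu_{\alpha,p}(f)\bigr)+d\bigl(e,\mu_{\alpha,p}(f)\bigr)=\Delta_{\alpha,p}(f)+d\bigl(e,\mu_{\alpha,p}(f)\bigr),
\]
which is the asserted inequality after taking \(p\)-th roots on the left. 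No form-domain issue arises, since \(q_{\alpha,d_e^p}(f)\) is interpreted as this integral, which is finite whenever the \(p\)-th moment is finite.
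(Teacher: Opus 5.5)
Your proof is correct and follows the paper's route. The Lipschitz bound comes from the triangle inequality plus Minkowski, existence of a minimizer from coercivity plus properness, and the energy estimate from the Lipschitz bound at $y=e$, $z=\mu_{\alpha,p}(f)$. The only difference is the coercivity step: the paper gets $M_p(\eta;y)\geq d(e,y)-M_p(\eta;e)$ in one line from Minkowski applied to $d(e,y)\leq d(x,y)+d(x,e)$, whereas you use a half-mass ball argument, which is slightly longer but equally valid.
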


\begin{proof}
The triangle inequality in \(X\), followed by Minkowski's inequality,
gives \(M_p(\eta;y)\leq M_p(\eta;z)+d(y,z)\).
Also,
\[
    M_p(\eta;y)\geq d(e,y)-M_p(\eta;e).
\]
Thus the continuous function \(y\mapsto M_p(\eta;y)\) has bounded
sublevel sets.  Properness makes those sets compact, so a minimum
exists.  Apply the first inequality with
\(y=e\), \(z=\mu_{\alpha,p}(f)\), and \(\eta=\mu_f^\alpha\).
\end{proof}

\subsection{Mean-dispersion inequalities}

\begin{theorem}
\label{thm:mean-dispersion}
Let \(\varphi_1,\ldots,\varphi_N\) be orthonormal in \(\Halpha\),
with \(\mu_{\varphi_j}^\alpha\in\mathcal P_p(X)\).  Then
\begin{equation}\label{eq:p-mean-dispersion}
\begin{aligned}
    \sum_{j=1}^N
       \bigl[\Delta_{\alpha,p}(\varphi_j)
                  +d(e,\mu_{\alpha,p}(\varphi_j))\bigr]^p
       &\geq\sum_{j=1}^Nq_{\alpha,d_e^p}(\varphi_j)\\
       &\geq\int_0^N\bigl(V_\alpha^\dagger(s)\bigr)^p\,ds.
\end{aligned}
\end{equation}
Consequently,
\begin{equation}\label{eq:separate-mean-dispersion}
    \sum_{j=1}^N
       \left[\Delta_{\alpha,p}(\varphi_j)^p
                  +d(e,\mu_{\alpha,p}(\varphi_j))^p\right]
       \geq 2^{1-p}\int_0^N\bigl(V_\alpha^\dagger(s)\bigr)^p\,ds.
\end{equation}
If \(q_{\alpha,d_e^p}\) is densely defined, one may replace the
integral on the right of \eqref{eq:p-mean-dispersion} by
\(\sum_{j=1}^N\lambda_{j,\alpha}(d_e^p)\).
\end{theorem}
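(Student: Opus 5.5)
The plan is to chain Lemma~\ref{lem:moment-centers} with the occupation bound, working termwise and then summing.

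\emph{First inequality.} For each \(j\), Lemma~\ref{lem:moment-centers} applied to \(f=\varphi_j\) gives
\[
q_{\alpha,d_e^p}(\varphi_j)^{1/p}\leq\Delta_{\alpha,p}(\varphi_j)+d(e,\mu_{\alpha,p}(\varphi_j)).
\]
Raising to the \(p\)-th power and summing over \(j\) gives the first inequality in \eqref{eq:p-mean-dispersion}. Here \(q_{\alpha,d_e^p}(\varphi_j)=\int d_e^p\,d\mu_{\varphi_j}^\alpha\) is just the \(p\)-th moment of \(\mu^\alpha_{\varphi_j}\) about \(e\), which is finite by hypothesis. No form-domain issue arises, since \(q\) is being used only as an integral.

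\emph{Second inequality.} Here I would avoid Theorem~\ref{thm:occupation-filling}, because it presupposes a densely defined form. Instead I will use the occupation density directly:
\[
h(x)=\sum_j|\langle\varphi_j,\kalpha{x}\rangle_\alpha|^2 .
\]
Bessel's inequality gives \(0\leq h\leq1\), and Parseval gives \(\int h\,d\nualpha=N\), so \(N\leq\nualpha(X)\). Since \(\sum_j q_{\alpha,d_e^p}(\varphi_j)=\int d_e^p\,h\,d\nualpha\), the first equality in Lemma~\ref{lem:filling-duality} with \(a=d_e^p\) bounds this below by \(\Phi_{\alpha,d_e^p}(N)\). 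That lemma's proof uses only layer cake and never uses form domains. The identification \(F^\dagger_{\alpha,d_e^p}=(V_\alpha^\dagger)^p\) in the proof of Corollary~\ref{cor:radial-filling}, with \(\rho(r)=r^p\), then converts \(\Phi_{\alpha,d_e^p}(N)\) into \(\int_0^N(V_\alpha^\dagger)^p\,ds\). That argument is purely measure-theoretic.

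\emph{Separated form.} Inequality \eqref{eq:separate-mean-dispersion} follows from convexity. For \(p\geq1\) we have \((u+v)^p\leq2^{p-1}(u^p+v^p)\), and multiplying by \(2^{1-p}\) gives the claim. (For \(p<1\) this would be \((u+v)^p\leq u^p+v^p\), which is stronger, but the section assumes \(p\geq1\).)

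\emph{Spectral refinement.} When the form is densely defined, \eqref{eq:orthonormal-filling-bound} of Theorem~\ref{thm:occupation-filling} gives the intermediate sum \(\sum_j\lambda_{j,\alpha}(d_e^p)\), which dominates the integral.

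\emph{Main obstacle.} The only delicate point is that the fixed-parameter claim must hold without dense definition and without any assumption on atoms of \(\nualpha\). I handle this by routing the second inequality through the scalar filling lemma rather than through the spectral theorem, and by noting that the generalized-inverse identification for closed versus open balls already covers atoms.
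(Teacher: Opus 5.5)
Your proposal is correct and follows the paper's proof step for step. It uses Lemma~\ref{lem:moment-centers} for the first inequality, the occupation density with Lemma~\ref{lem:filling-duality} and the radial change of variable from Corollary~\ref{cor:radial-filling} for the second, convexity for \eqref{eq:separate-mean-dispersion}, and Theorem~\ref{thm:occupation-filling} for the spectral refinement. (A minor slip in your parenthetical, outside the stated range \(p\geq1\): for \(p<1\), subadditivity would only give the constant \(1\), which is weaker, not stronger, than \(2^{1-p}\).)
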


\begin{proof}
The first inequality follows from
Lemma~\ref{lem:moment-centers}.  The occupation function
\[
    s_\alpha(x)=\sum_{j=1}^N
                     |\langle\varphi_j,\kalpha{x}\rangle_\alpha|^2
\]
satisfies \(0\leq s_\alpha\leq1\) and
\(\int_Xs_\alpha\,d\nualpha=N\).
Apply Lemma~\ref{lem:filling-duality} to \(a=d_e^p\).
The scalar filling cost is the integral in
\eqref{eq:p-mean-dispersion}, by the change of radial variable in
Corollary~\ref{cor:radial-filling}.  Finally,
\((u+v)^p\leq2^{p-1}(u^p+v^p)\) gives
\eqref{eq:separate-mean-dispersion}.  The spectral refinement is
Theorem~\ref{thm:occupation-filling}.
\end{proof}

At \(p=1\), \eqref{eq:p-mean-dispersion} is the form stated in the
introduction.  The center term records the
possibility of keeping individual states localized while moving them
farther apart.  The collective form of the estimate connects it with
the mean-dispersion principles of Jaming--Powell
\cite{JamingPowell} and Malinnikova \cite{Malinnikova}; here the moments
are those of the frame-induced phase-space measures.

For Euclidean \(X\) and \(p=2\), the minimizing center is the
barycenter and the identity
\[
    q_{\alpha,d_e^2}(f)
       =\Delta_{\alpha,2}(f)^2+|\mu_{\alpha,2}(f)-e|^2
\]
improves the factor \(2^{1-p}\) in
\eqref{eq:separate-mean-dispersion} to \(1\).  This follows by
expanding the square and using the vanishing first moment about the
barycenter.  In the Gaussian Gabor model below, this quadratic identity
converts the exact phase-space filling spectrum into an explicit
finite-\(N\) version of Malinnikova's time--frequency moment inequality.

\begin{corollary}
\label{cor:bounded-localization-cardinality}
Suppose \(V_\alpha(r)\leq A_\alpha r^D\) for every \(r>0\), with
\(A_\alpha,D>0\).  If
\[
    \Delta_{\alpha,p}(\varphi_j)\leq L,\qquad
    d(e,\mu_{\alpha,p}(\varphi_j))\leq R
       \quad(1\leq j\leq N),
\]
then
\begin{equation}\label{eq:bounded-localization-cardinality}
    N\leq A_\alpha
            \left(\frac{D+p}{D}\right)^{D/p}(L+R)^D.
\end{equation}
At any fixed parameter, properness alone rules out an infinite
orthonormal sequence whose centers stay in a bounded set and whose
\(p\)-dispersions stay bounded.
\end{corollary}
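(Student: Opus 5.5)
We combine the mean-dispersion inequality \eqref{eq:p-mean-dispersion} of Theorem~\ref{thm:mean-dispersion} with the explicit lower bound for the filling integral under polynomial volume growth, already computed in the proof of Corollary~\ref{cor:volume-growth-bounds}.

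First, the hypotheses bound each term on the left of \eqref{eq:p-mean-dispersion} by $(L+R)^p$, so the left side is at most $N(L+R)^p$. Second, $V_\alpha(r)\leq A_\alpha r^D$ gives $V_\alpha^\dagger(s)\geq(s/A_\alpha)^{1/D}$. To see this, note that if $r<(s/A_\alpha)^{1/D}$ then $V_\alpha(r)<s$, so no such $r$ belongs to the set defining the infimum. Integrating the $p$-th power gives
\[
    \int_0^N\bigl(V_\alpha^\dagger(s)\bigr)^p\,ds
      \geq A_\alpha^{-p/D}\frac{D}{D+p}N^{1+p/D}.
\]
This bound uses only the occupation function and Lemma~\ref{lem:filling-duality}, so no densely defined form is needed. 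Combining the two estimates,
\[
    N(L+R)^p\geq\frac{D}{D+p}A_\alpha^{-p/D}N^{1+p/D}.
\]
For $N\geq1$ we may divide by $N$, which gives $N^{p/D}\leq\frac{D+p}{D}A_\alpha^{p/D}(L+R)^p$. Raising both sides to the power $D/p$ yields \eqref{eq:bounded-localization-cardinality}. The case $N=0$ is trivial.

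For the final assertion, the polynomial growth hypothesis is not available, and properness must replace it. Suppose an infinite orthonormal sequence has centers in $B(e,R)$ and $p$-dispersions at most $L$. Every finite subfamily of size $N$ then satisfies
\[
    N(L+R)^p\geq\int_0^N\bigl(V_\alpha^\dagger(s)\bigr)^p\,ds.
\]
To reach a contradiction we need $V_\alpha^\dagger(s)\to\infty$, so that the average of $(V_\alpha^\dagger)^p$ over $[0,N]$ tends to infinity. We first check that infinitely many orthonormal vectors force $\dim\Halpha=\infty$, hence $\nualpha(X)=\infty$. Properness makes every closed ball compact, and a Radon measure is finite on compacts, so every ball has finite $\nualpha$-measure. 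Thus $V_\alpha(r)<\infty$ for every $r$ while $\nualpha(X)=\infty$, which forces $V_\alpha^\dagger(s)\to\infty$ as $s\to\infty$.

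Then $N^{-1}\int_0^N(V_\alpha^\dagger)^p\,ds\to\infty$, which contradicts the uniform bound $(L+R)^p$. The main point needing care is exactly this step: confirming that balls have finite frame measure through properness and the Radon property, as in the infinite-dimensional part of the proof of Corollary~\ref{cor:power-cost-sandwich}.
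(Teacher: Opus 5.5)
Your proposal is correct and follows the paper's proof essentially verbatim: you bound the left side of \eqref{eq:p-mean-dispersion} by \(N(L+R)^p\), integrate \(V_\alpha^\dagger(s)\geq(s/A_\alpha)^{1/D}\), and rearrange. For the infinite-sequence assertion you likewise use \(\nualpha(X)=\infty\) together with finite ball volumes to force \(N^{-1}\int_0^N(V_\alpha^\dagger)^p\,ds\to\infty\).
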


\begin{proof}
The volume bound gives
\[
    N(L+R)^p
       \geq\int_0^N(V_\alpha^\dagger(s))^p\,ds
       \geq\frac{D}{D+p}A_\alpha^{-p/D}N^{1+p/D}.
\]
Rearrange to obtain \eqref{eq:bounded-localization-cardinality}.
For the last assertion, an infinite orthonormal sequence would imply
\(\nualpha(X)=\infty\).  Every fixed ball has finite measure, so
\(V_\alpha^\dagger(s)\to\infty\) and
\[
    \frac1N\int_0^N(V_\alpha^\dagger(s))^p\,ds\longrightarrow\infty,
\]
contradicting the uniform upper bound \((L+R)^p\).
\end{proof}

\begin{corollary}
\label{cor:semiclassical-mean-dispersion}
Assume only \eqref{eq:measure-convergence}, and let the orthonormal
families have sizes \(N_\alpha\) with
\(N_\alpha/c(\alpha)\to\varrho\in(0,\infty)\).
Fix \(p\geq1\), and assume that all their phase-space measures
belong to \(\mathcal P_p(X)\).  Then
\begin{equation}\label{eq:semiclassical-mean-dispersion}
\begin{split}
    &\liminf_{\alpha\to\infty}\frac1{c(\alpha)}
       \sum_{j=1}^{N_\alpha}
          \bigl[\Delta_{\alpha,p}(\varphi_{j,\alpha})
               +d(e,\mu_{\alpha,p}(\varphi_{j,\alpha}))\bigr]^p\\
    &\hspace{35mm}\geq\int_0^\varrho(V^\dagger(s))^p\,ds,
       \qquad V(r)=\nu(B(e,r)).
\end{split}
\end{equation}
No convergence of the moments of individual states is assumed.
\end{corollary}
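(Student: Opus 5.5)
The plan is to combine the exact fixed-parameter inequality \eqref{eq:p-mean-dispersion} with the weak-convergence lower bound of Theorem~\ref{thm:semiclassical-filling}. The one subtlety is that the latter is stated for Toeplitz eigenvalue sums, which need densely defined forms. We want to avoid that hypothesis. So we work directly with the scalar filling cost \(\Phi_{\alpha,a}\), taking \(a=d_e^p\), and use its convex-duality form from Lemma~\ref{lem:filling-duality}.

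First, Theorem~\ref{thm:mean-dispersion} and Corollary~\ref{cor:radial-filling} give, at each \(\alpha\),
\[
    \sum_{j=1}^{N_\alpha}\bigl[\Delta_{\alpha,p}(\varphi_{j,\alpha})
        +d(e,\mu_{\alpha,p}(\varphi_{j,\alpha}))\bigr]^p
    \geq\Phi_{\alpha,d_e^p}(N_\alpha)
    =\int_0^{N_\alpha}\bigl(V_\alpha^\dagger(s)\bigr)^p\,ds.
\]
Since an orthonormal family of size \(N_\alpha\) exists, \(N_\alpha\leq\nualpha(X)\), so Lemma~\ref{lem:filling-duality} applies. It gives, for every \(t\geq0\),
\[
    \frac{\Phi_{\alpha,d_e^p}(N_\alpha)}{c(\alpha)}
      \geq\frac{N_\alpha}{c(\alpha)}\,t-\int_X(t-d_e^p)_+\,dm_\alpha .
\]

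Second, we pass to the limit. Because \(X\) is proper, \((t-d_e^p)_+\) is continuous with compact support. Hence \eqref{eq:measure-convergence} gives \(\int(t-d_e^p)_+\,dm_\alpha\to\int(t-d_e^p)_+\,d\nu\). Taking \(\liminf\) and then the supremum over \(t\) yields the lower bound \(\sup_t\{\varrho t-\int(t-d_e^p)_+\,d\nu\}\).

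Third, we identify this limit. Applying Lemma~\ref{lem:filling-duality} to the Radon measure \(\nu\), the supremum equals \(\int_0^\varrho F_{d_e^p}^\dagger(s)\,ds\), where \(F\) is computed with \(\nu\). The lemma's proof is purely measure-theoretic, so it applies to \(\nu\) in place of \(\nualpha\). If \(\varrho>\nu(X)\), both sides are \(+\infty\), exactly as noted in the proof of Theorem~\ref{thm:semiclassical-filling}. The change of radial variable from Corollary~\ref{cor:radial-filling}, again applied to \(\nu\), rewrites the integral as \(\int_0^\varrho(V^\dagger(s))^p\,ds\) with \(V(r)=\nu(B(e,r))\).

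The main point needing care is this last identification. We must check that the generalized inverse of \(r\mapsto\nu(\{d_e\leq r\})\) coincides with \(V^\dagger\), which is defined through open balls. This is the closed-ball-inside-open-ball argument already used in Corollary~\ref{cor:radial-filling}. Also, \(\rho(r)=r^p\) is continuous and strictly increasing, so \(F^\dagger_{d_e^p}=(V^\dagger)^p\).

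Finally, the argument uses no convergence of individual moments. The only limit taken is of the compactly supported test integrals above.
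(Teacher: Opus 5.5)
Your proposal is correct and is essentially the paper's own proof. The paper bounds the normalized sum of the energies \(q_{\alpha,d_e^p}(\varphi_{j,\alpha})\) below by \(\frac{N_\alpha}{c(\alpha)}t-\int_X(t-d_e^p)_+\,dm_\alpha\), passes to the lower limit because this integrand lies in \(C_c(X)\), takes the supremum over \(t\) with Lemma~\ref{lem:filling-duality} applied to \(\nu\) (treating \(\varrho>\nu(X)\) as infinite on both sides), and uses Lemma~\ref{lem:moment-centers} for the upper bound by the center--dispersion sum. The only difference is cosmetic: you package the fixed-parameter step through Theorem~\ref{thm:mean-dispersion} and \(\Phi_{\alpha,d_e^p}(N_\alpha)\) instead of working with the energies directly, and you spell out the open/closed-ball change of variable that the paper leaves implicit.
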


\begin{proof}
For every \(t\geq0\), occupation and the scalar filling duality give
\[
    \frac1{c(\alpha)}
       \sum_{j=1}^{N_\alpha}q_{\alpha,d_e^p}(\varphi_{j,\alpha})
       \geq\frac{N_\alpha}{c(\alpha)}t
                  -\int_X(t-d_e^p)_+\,dm_\alpha.
\]
The last integrand is continuous and compactly supported.  Pass to
the lower limit, take the supremum over \(t\), and apply
Lemma~\ref{lem:filling-duality} to \(\nu\).
If \(\varrho>\nu(X)\), both the supremum and the claimed lower bound
are infinite.
Lemma~\ref{lem:moment-centers} bounds the left side from above by
the expression in \eqref{eq:semiclassical-mean-dispersion}.
\end{proof}

\subsection{Umbrellas, effective volume, and rearrangement}
\label{subsec:umbrella-rearrangement}

An umbrella for a family is a nonnegative Borel function
\(\Phi_\alpha\) such that
\begin{equation}\label{eq:common-frame-envelope}
    |\langle\varphi_j,\kalpha{x}\rangle_\alpha|^2
       \leq\Phi_\alpha(x)
       \quad\text{for \(\nualpha\)-almost every }x,\quad1\leq j\leq N.
\end{equation}
Its total mass may exceed one; no normalization is imposed.
All exceptional sets and all integrals here refer to \(\nualpha\).

For \(\Phi_\alpha\in L^1(\nualpha)\) and \(0<\delta<1\), define
\begin{equation}\label{eq:effective-envelope-volume}
    K_{\alpha,\Phi_\alpha}(\delta)
       =\inf\left\{\nualpha(E):
          \int_{X\setminus E}\Phi_\alpha\,d\nualpha\leq\delta\right\},
\end{equation}
where \(E\) ranges over Borel sets.  This is the least frame volume
capturing all but \(\delta\) of the envelope mass.
Write \(\Phi_\alpha^*\) for the nonincreasing rearrangement relative
to \(\nualpha\), on the interval \((0,\nualpha(X))\), extended by
zero beyond that interval when the total measure is finite.  Thus
\(\Phi_\alpha\) and \(\Phi_\alpha^*\) have the same superlevel
measures.  This definition also applies to atomic frame measures.

\begin{theorem}
\label{thm:rearrangement-umbrella}
Let an orthonormal family of size \(N\geq1\) satisfy
\eqref{eq:common-frame-envelope}, with
\(\Phi_\alpha\in L^1(\nualpha)\).  Then
\begin{equation}\label{eq:envelope-capacity}
    N\leq\int_X\min\{1,N\Phi_\alpha(x)\}\,d\nualpha(x)
       =\int_0^\infty\min\{1,N\Phi_\alpha^*(s)\}\,ds.
\end{equation}
For every \(0<\delta<1\),
\[
    N(1-\delta)\leq K_{\alpha,\Phi_\alpha}(\delta).
\]
In particular, an infinite orthonormal sequence cannot have a common
integrable frame envelope.
\end{theorem}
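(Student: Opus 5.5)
The argument uses only the occupation density
\[
    s_\alpha(x)=\sum_{j=1}^N|\langle\varphi_j,\kalpha{x}\rangle_\alpha|^2 ,
\]
which, as in the proof of Theorem~\ref{thm:mean-dispersion}, satisfies \(0\leq s_\alpha\leq1\) and \(\int_Xs_\alpha\,d\nualpha=N\). The umbrella hypothesis \eqref{eq:common-frame-envelope} adds the pointwise bound \(s_\alpha\leq N\Phi_\alpha\) \(\nualpha\)-almost everywhere. Combining the two bounds gives
\[
    s_\alpha\leq\min\{1,N\Phi_\alpha\}
\]
almost everywhere. Integrating yields the inequality in \eqref{eq:envelope-capacity}.

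For the equality with the rearranged integral, I use equimeasurability. The function \(\min\{1,N\Phi_\alpha\}\) is a nondecreasing continuous function of \(\Phi_\alpha\) that vanishes at \(0\). Its distribution function relative to \(\nualpha\) therefore coincides with that of \(\min\{1,N\Phi_\alpha^*\}\) relative to Lebesgue measure on \((0,\infty)\). The layer-cake formula then equates the two integrals. Extending \(\Phi_\alpha^*\) by zero beyond \(\nualpha(X)\) causes no trouble, because the integrand vanishes there. This step is the only place where care with atoms is needed, but it is absorbed into the definition of the rearrangement through superlevel measures.

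For the effective-volume bound, fix \(\delta\) and any Borel set \(E\) with \(\int_{X\setminus E}\Phi_\alpha\,d\nualpha\leq\delta\). Split \(N=\int_E s_\alpha+\int_{X\setminus E}s_\alpha\). On \(E\) I use \(s_\alpha\leq1\), and on the complement \(s_\alpha\leq N\Phi_\alpha\). This gives
\[
    N\leq\nualpha(E)+N\delta,
\]
and taking the infimum over \(E\) gives \(N(1-\delta)\leq K_{\alpha,\Phi_\alpha}(\delta)\). The quantity \(K_{\alpha,\Phi_\alpha}(\delta)\) is finite: since \(\Phi_\alpha\in L^1\), the set \(\{\Phi_\alpha>\epsilon\}\) has finite measure by Chebyshev and captures all but \(\delta\) of the mass for small \(\epsilon\) by dominated convergence.

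For the final assertion, suppose an infinite orthonormal sequence shares an integrable envelope \(\Phi_\alpha\). Apply the bound with \(\delta=1/2\) to its first \(N\) members for every \(N\). This gives \(N/2\leq K_{\alpha,\Phi_\alpha}(1/2)<\infty\) for all \(N\), a contradiction.

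The main obstacle is the rearrangement identity. I would prove it by writing both integrals as \(\int_0^1\) of superlevel measures via layer cake, which avoids any differentiability assumptions.
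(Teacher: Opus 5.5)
Your proposal is correct and follows the paper's proof essentially step by step: the two pointwise bounds on \(s_\alpha\), the layer-cake identity for the rearranged integral, and the cutoff argument for \(K_{\alpha,\Phi_\alpha}(\delta)\). Your split of \(\int_X s_\alpha\,d\nualpha\) over \(E\) and \(X\setminus E\) is just the paper's per-state concentration argument summed over the family.

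The only variation is in the final assertion. You use finiteness of \(K_{\alpha,\Phi_\alpha}(1/2)\). The paper instead divides \eqref{eq:envelope-capacity} by \(N\) and applies dominated convergence to \(\int_X\min\{N^{-1},\Phi_\alpha\}\,d\nualpha\). Your route is equally valid and rests on the same integrability of \(\Phi_\alpha\).
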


\begin{proof}
The occupation function satisfies both \(s_\alpha\leq1\) and
\(s_\alpha\leq N\Phi_\alpha\).  Integrating proves the inequality
in \eqref{eq:envelope-capacity}.  Its equality follows from
equimeasurability, or directly from layer cake:
\[
    \int_X\min\{1,N\Phi_\alpha\}\,d\nualpha
       =\int_0^1\nualpha(\{\Phi_\alpha>t/N\})\,dt.
\]
If \(E\) is admissible in \eqref{eq:effective-envelope-volume}, each
state has at least \(1-\delta\) of its mass in \(E\).  Therefore
\[
    N(1-\delta)
       \leq\int_Es_\alpha\,d\nualpha\leq\nualpha(E).
\]
Take the infimum over \(E\).
Finally, dividing \eqref{eq:envelope-capacity} by \(N\) gives
\[
    1\leq\int_X\min\{N^{-1},\Phi_\alpha\}\,d\nualpha.
\]
At a fixed parameter the right side tends to zero as \(N\to\infty\),
by dominated convergence.  This excludes an infinite family.
\end{proof}

The capacity condition retains all the information furnished by the
pointwise bounds \(s_\alpha\leq1\) and
\(s_\alpha\leq N\Phi_\alpha\).  Indeed,
\[
    \inf_E\left\{\nualpha(E)
          +N\int_{X\setminus E}\Phi_\alpha\,d\nualpha\right\}
       =\int_X\min\{1,N\Phi_\alpha\}\,d\nualpha,
\]
with a minimizing choice \(E=\{N\Phi_\alpha>1\}\).
Thus \eqref{eq:envelope-capacity} is the result of optimizing all
such cutoff estimates.  These are necessary conditions for an
orthonormal family; constructing vectors with a prescribed envelope
also depends on the RKHS.

The qualitative conclusion is a frame version of Shapiro's umbrella
principle \cite{Shapiro}; quantitative predecessors include
Jaming--Powell \cite{JamingPowell}, and general frame compactness
methods appear in \cite{MSWWCompactness}.  For umbrella theorems
formulated through Wigner, Rihaczek, and Mellin representations, see
Boggiatto--Boiti--Oliaro \cite{BoggiattoBoitiOliaro}.

\begin{proposition}
\label{prop:moment-umbrella}
Suppose \eqref{eq:common-frame-envelope} holds.  For any nonnegative
continuous confining cost \(a\), if
\[
    M_{\alpha,a}(\Phi_\alpha)
       :=\int_Xa(x)\Phi_\alpha(x)\,d\nualpha(x)<\infty,
\]
then
\begin{equation}\label{eq:moment-umbrella}
    \int_0^N F_{\alpha,a}^{\dagger}(s)\,ds
       \leq N M_{\alpha,a}(\Phi_\alpha).
\end{equation}
If the form is densely defined and \(N=n+\theta\), the left side may be
replaced by
\(\sum_{j=1}^n\lambda_{j,\alpha}(a)
 +\theta\lambda_{n+1,\alpha}(a)\).
For \(a=d_e^p\), \(p>0\), and
\(V_\alpha(r)\leq A_\alpha r^D\), this implies
\begin{equation}\label{eq:polynomial-umbrella}
    N\leq A_\alpha
       \left(\frac{D+p}{D}\right)^{D/p}
           \bigl(M_{\alpha,d_e^p}(\Phi_\alpha)\bigr)^{D/p}.
\end{equation}
\end{proposition}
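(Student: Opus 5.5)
The plan is to bound the total energy of the family from above using the umbrella and from below using the filling principle. Write \(s_\alpha(x)=\sum_{j=1}^N|\langle\varphi_j,\kalpha{x}\rangle_\alpha|^2\) for the occupation function. The envelope hypothesis \eqref{eq:common-frame-envelope} gives \(s_\alpha\leq N\Phi_\alpha\) \(\nualpha\)-almost everywhere. Multiplying by \(a\geq0\) and integrating yields
\[
    \sum_{j=1}^N\int_Xa\,|\langle\varphi_j,\kalpha{x}\rangle_\alpha|^2\,d\nualpha
       =\int_Xa\,s_\alpha\,d\nualpha\leq N M_{\alpha,a}(\Phi_\alpha).
\]
On the other hand, \(0\leq s_\alpha\leq1\) and \(\int s_\alpha\,d\nualpha=N\). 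Lemma~\ref{lem:filling-duality} then bounds \(\int a s_\alpha\,d\nualpha\) from below by \(\Phi_{\alpha,a}(N)=\int_0^NF_{\alpha,a}^\dagger(s)\,ds\). Combining the two bounds gives \eqref{eq:moment-umbrella}. No densely defined form is needed, since both sides are nonnegative extended integrals.

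For the spectral refinement, assume \(q_{\alpha,a}\) is densely defined. The left-hand sum equals \(\sum_jq_{\alpha,a}(\varphi_j)\), which is finite by the bound just proved. Then each \(\varphi_j\) lies in the form domain, and \eqref{eq:orthonormal-filling-bound} of Theorem~\ref{thm:occupation-filling} bounds it below by \(S_{\alpha,a}(N)\). For integer \(N\), \(\theta=0\), so this gives the stated replacement directly. If one also wants fractional \(N\), the same chain applies through \eqref{eq:occupation-filling-chain} to any \(0\leq\gamma\leq I\) with \(s_\alpha=h_\gamma^\alpha\leq N\Phi_\alpha\). Since the statement concerns an orthonormal family, integer \(N\) is the relevant case.

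For the polynomial consequence, take \(a=d_e^p\). Corollary~\ref{cor:radial-filling} identifies \(F_{\alpha,a}^\dagger(s)=(V_\alpha^\dagger(s))^p\). Using \(V_\alpha^\dagger(s)\geq(s/A_\alpha)^{1/D}\), exactly as in Corollary~\ref{cor:volume-growth-bounds}, we get
\[
    \frac{D}{D+p}A_\alpha^{-p/D}N^{1+p/D}\leq N M_{\alpha,d_e^p}(\Phi_\alpha).
\]
Dividing by \(N\) and raising to the power \(D/p\) yields \eqref{eq:polynomial-umbrella}.

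The only delicate step is the lower bound \(V_\alpha^\dagger(s)\geq(s/A_\alpha)^{1/D}\). If \(r<(s/A_\alpha)^{1/D}\), then \(V_\alpha(r)\leq A_\alpha r^D<s\). Hence the infimum defining \(V_\alpha^\dagger(s)\) is at least \((s/A_\alpha)^{1/D}\), and the argument goes through without difficulty.
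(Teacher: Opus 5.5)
Your proof is correct and follows the paper's argument: the envelope bounds the total energy above by \(N M_{\alpha,a}(\Phi_\alpha)\), Lemma~\ref{lem:filling-duality} (or Theorem~\ref{thm:occupation-filling} for the spectral refinement) bounds it below, and integrating \((V_\alpha^\dagger(s))^p\geq(s/A_\alpha)^{p/D}\) gives \eqref{eq:polynomial-umbrella}. Your added checks, namely membership of each \(\varphi_j\) in the form domain and the lower bound on the generalized inverse, are sound and simply make explicit what the paper leaves implicit.
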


\begin{proof}
The envelope gives
\(\sum_jq_{\alpha,a}(\varphi_j)
 \leq N M_{\alpha,a}(\Phi_\alpha)\).
The occupation argument of Lemma~\ref{lem:filling-duality} bounds
this sum from below by \(\Phi_{\alpha,a}(N)\).
Use Theorem~\ref{thm:occupation-filling} for the spectral refinement.
In the radial polynomial case, integrate
\((V_\alpha^\dagger(s))^p\geq(s/A_\alpha)^{p/D}\) and rearrange.
\end{proof}

To see the relation with effective volume, write
\(M=M_{\alpha,d_e^p}(\Phi_\alpha)\).
If \(M>0\), then, for \(r>0\),
\[
    \int_{X\setminus B(e,r)}\Phi_\alpha\,d\nualpha\leq M r^{-p}.
\]
Hence, whenever \(r^p>M\), the same cutoff argument gives
\[
    N\leq\frac{V_\alpha(r)}{1-M/r^p}.
\]
In particular, \(N\leq2V_\alpha((2M)^{1/p})\).
The integrated bound \eqref{eq:moment-umbrella} uses the entire
volume profile and yields the better polynomial constant in
\eqref{eq:polynomial-umbrella}.  If \(M=0\), all states are supported,
in frame coefficients, at \(e\), so \(N\leq\nualpha(\{e\})\leq1\).
These moment estimates are valid also when the envelope itself is
not integrable near the zero set of the cost.

\subsection{Transport away from a prescribed distribution}
\label{subsec:collective-transport}

For \(\eta,\sigma\in\mathcal P_p(X)\), the Wasserstein distance is
\[
    W_p(\eta,\sigma)
       =\left(\inf_{\pi\in\Pi(\eta,\sigma)}
                    \int_{X\times X}d(x,y)^p\,d\pi(x,y)\right)^{1/p}.
\]
The only coupling of \(\eta\) with \(\delta_y\) is
\(\eta\otimes\delta_y\), so
\[
    W_p(\eta,\delta_y)=M_p(\eta;y),
    \qquad
    \Delta_p(\eta)=\min_{y\in X}W_p(\eta,\delta_y).
\]
In particular,
\(q_{\alpha,d_e^p}(f)=W_p(\mu_f^\alpha,\delta_e)^p\).

Let \(0\leq\gamma\leq I\) be trace class with \(N=\Tr\gamma>0\), and
define its normalized phase-space measure by
\[
    d\overline\mu_\gamma^\alpha(x)
       =\frac{h_\gamma^\alpha(x)}{N}\,d\nualpha(x).
\]
For the projection onto an orthonormal family, this is
\(N^{-1}\sum_{j=1}^N\mu_{\varphi_j}^\alpha\).
To account for a possibly nonhomogeneous frame measure, write
\[
\begin{split}
    V_{\alpha,y}(r)&=\nualpha(B(y,r)),\\
    \mathcal R_{\alpha,p}(N;y)
       &=\left(\frac1N\int_0^N
                       (V_{\alpha,y}^\dagger(s))^p\,ds\right)^{1/p}.
\end{split}
\]

\begin{theorem}
\label{thm:collective-transport}
For every \(\sigma\in\mathcal P_p(X)\),
\begin{equation}\label{eq:center-dependent-transport}
    W_p(\overline\mu_\gamma^\alpha,\sigma)
       \geq\sup_{y\in X}
             \bigl[\mathcal R_{\alpha,p}(N;y)-M_p(\sigma;y)\bigr]_+.
\end{equation}
If a finite nondecreasing function
\(U_\alpha:[0,\infty)\to[0,\infty)\) satisfies
\begin{equation}\label{eq:uniform-ball-volume}
    \nualpha(B(y,r))\leq U_\alpha(r)
       \qquad(y\in X,\ r>0),
\end{equation}
then
\begin{equation}\label{eq:uniform-volume-transport}
    W_p(\overline\mu_\gamma^\alpha,\sigma)
       \geq
       \left[
          \left(\frac1N\int_0^N(U_\alpha^\dagger(s))^p\,ds\right)^{1/p}
          -\Delta_p(\sigma)
       \right]_+.
\end{equation}
Distances are interpreted as \(+\infty\) if
\(\overline\mu_\gamma^\alpha\) has infinite \(p\)-th moment.
\end{theorem}

\begin{proof}
The density of \(\overline\mu_\gamma^\alpha\) relative to
\(\nualpha\) is at most \(1/N\).  Scalar filling at any fixed
center \(y\) therefore gives
\[
    W_p(\overline\mu_\gamma^\alpha,\delta_y)^p
       =\frac1N\int_Xd(x,y)^p h_\gamma^\alpha(x)\,d\nualpha(x)
       \geq\mathcal R_{\alpha,p}(N;y)^p.
\]
If this measure has finite moment, the triangle inequality yields
\[
    W_p(\overline\mu_\gamma^\alpha,\sigma)+M_p(\sigma;y)
       \geq\mathcal R_{\alpha,p}(N;y).
\]
Take the supremum over \(y\) to prove
\eqref{eq:center-dependent-transport}.
Under \eqref{eq:uniform-ball-volume}, one has
\(V_{\alpha,y}^\dagger\geq U_\alpha^\dagger\).  The lower filling
radius is then independent of \(y\), and minimizing
\(M_p(\sigma;y)\) proves \eqref{eq:uniform-volume-transport}.
If the source has infinite moment, its distance to the finite-moment
target is infinite by the same moment triangle inequality.
\end{proof}

Thus a target's dispersion can replace its moment about a prescribed
center whenever a uniform volume bound is available.
This requires no transitive action or covariance of the kernels.
In particular, if
\(\nualpha(B(y,r))\leq A_\alpha r^D\) for every \(y,r\), then
\[
    W_p(\overline\mu_\gamma^\alpha,\sigma)
       \geq
       \left[
          \left(\frac{D}{D+p}\right)^{1/p}
          A_\alpha^{-1/D}N^{1/D}-\Delta_p(\sigma)
       \right]_+.
\]
For \(A_\alpha=Cc(\alpha)\), the leading term is a constant times
\((N/c(\alpha))^{1/D}\).
The same argument works for exponential volume by substituting its
generalized inverse in \eqref{eq:uniform-volume-transport}.

\begin{corollary}
\label{cor:transport-escape}
Fix \(\alpha\), and let \((\varphi_j)_{j\geq1}\) be orthonormal.
Set
\(\overline\mu_N^\alpha=N^{-1}\sum_{j=1}^N\mu_{\varphi_j}^\alpha\).
Then, for every fixed \(\sigma\in\mathcal P_p(X)\),
\[
    W_p(\overline\mu_N^\alpha,\sigma)\longrightarrow\infty
       \qquad(N\to\infty).
\]
\end{corollary}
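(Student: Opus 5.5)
The plan is to apply \eqref{eq:center-dependent-transport} of Theorem~\ref{thm:collective-transport} to the projection \(\gamma_N\) onto \(\operatorname{span}\{\varphi_1,\ldots,\varphi_N\}\). Then \(\Tr\gamma_N=N\) and \(\overline\mu_{\gamma_N}^\alpha=\overline\mu_N^\alpha\).

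Fix any \(y\in X\), for instance \(y=e\). The bound gives
\[
    W_p(\overline\mu_N^\alpha,\sigma)\geq\mathcal R_{\alpha,p}(N;y)-M_p(\sigma;y).
\]
Here \(M_p(\sigma;y)<\infty\) because \(\sigma\in\mathcal P_p(X)\). If \(\overline\mu_N^\alpha\) has infinite \(p\)-th moment, the distance is \(+\infty\) by the convention in the theorem, so only the finite-moment case needs work. It therefore suffices to show \(\mathcal R_{\alpha,p}(N;y)\to\infty\) as \(N\to\infty\).

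For this I repeat the argument from the end of Corollary~\ref{cor:bounded-localization-cardinality}. The existence of an infinite orthonormal sequence forces \(\dim\Halpha=\infty\), and then \eqref{eq:positive-trace-frame} with \(B=I\) gives \(\nualpha(X)=\infty\). On the other hand, \(X\) is proper, so every ball \(B(y,r)\) is relatively compact. A Radon measure gives it finite mass, so \(V_{\alpha,y}(r)<\infty\) for every \(r\).

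Hence, for every \(R>0\), the set \(\{s:V_{\alpha,y}^\dagger(s)\leq R\}\) is contained in \([0,V_{\alpha,y}(R')]\) for any \(R'>R\). In other words, \(V_{\alpha,y}^\dagger(s)\to\infty\) as \(s\to\infty\), and it is nondecreasing. The Cesàro mean \(N^{-1}\int_0^N(V_{\alpha,y}^\dagger(s))^p\,ds\) of a nondecreasing function tending to infinity also tends to infinity. To see this, split the integral at a cutoff \(s_0\) beyond which the integrand exceeds a prescribed \(R^p\); the initial piece is finite and becomes negligible after dividing by \(N\). This gives \(\mathcal R_{\alpha,p}(N;y)\to\infty\), and subtracting the fixed finite quantity \(M_p(\sigma;y)\) finishes the proof.

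The only delicate point is the finiteness of ball volumes together with the infinite total mass, so that the generalized inverse is finite and unbounded. Both facts are already recorded earlier in the paper, so I expect no real obstacle.
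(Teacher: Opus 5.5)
Your proposal is correct and follows the paper's own proof. Like the paper, you apply \eqref{eq:center-dependent-transport} at the single fixed center \(e\), use finite ball volumes together with \(\nu_\alpha(X)=\infty\) to force \(\mathcal R_{\alpha,p}(N;e)\to\infty\), and subtract the fixed finite moment \(M_p(\sigma;e)\). You additionally spell out steps the paper leaves implicit: infinite dimension via \eqref{eq:positive-trace-frame}, unboundedness of the generalized inverse, and the averaging argument.
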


\begin{proof}
Use \eqref{eq:center-dependent-transport} at the single fixed
center \(e\).  Since every ball has finite \(\nualpha\)-measure
and \(\nualpha(X)=\infty\), its filling radius
\(\mathcal R_{\alpha,p}(N;e)\) tends to infinity.
The target moment \(M_p(\sigma;e)\) is fixed and finite.
\end{proof}

The transport formulation also retains the distinction between
dispersion and motion of centers.  With
\[
    \sigma_N=\frac1N\sum_{j=1}^N
                         \delta_{\mu_{\alpha,p}(\varphi_j)},
\]
the average of the couplings from each state to its own center gives
\[
    W_p(\overline\mu_N^\alpha,\sigma_N)^p
       \leq\frac1N\sum_{j=1}^N\Delta_{\alpha,p}(\varphi_j)^p.
\]
Combining this with the triangle inequality to \(\delta_e\) yields
\[
\begin{split}
    \mathcal R_{\alpha,p}(N;e)
       &\leq
       \left(\frac1N\sum_{j=1}^N
                       \Delta_{\alpha,p}(\varphi_j)^p\right)^{1/p}\\
       &\quad+
       \left(\frac1N\sum_{j=1}^N
                   d(e,\mu_{\alpha,p}(\varphi_j))^p\right)^{1/p}.
\end{split}
\]
The family must pay through its average dispersion, the spread of its
centers, or both.

\begin{corollary}
\label{cor:semiclassical-transport}
Assume \eqref{eq:measure-convergence}, and let
\(0\leq\gamma_\alpha\leq I\) have finite traces \(N_\alpha\) with
\(N_\alpha/c(\alpha)\to\varrho\in(0,\infty)\).
Put \(V_y(r)=\nu(B(y,r))\).
For every fixed \(\sigma\in\mathcal P_p(X)\),
\[
\begin{split}
    &\liminf_{\alpha\to\infty}
          W_p(\overline\mu_{\gamma_\alpha}^\alpha,\sigma)\\
    &\qquad\geq\sup_{y\in X}
       \left[
          \left(\frac1\varrho\int_0^\varrho
                         (V_y^\dagger(s))^p\,ds\right)^{1/p}
          -M_p(\sigma;y)
       \right]_+.
\end{split}
\]
This bound requires neither averaged kernel localization nor
convergence of the state measures.
\end{corollary}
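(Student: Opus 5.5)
The plan is to combine the fixed-parameter transport bound of Theorem~\ref{thm:collective-transport} at each center \(y\) with the semiclassical filling argument of Corollary~\ref{cor:semiclassical-mean-dispersion}. First fix \(y\in X\). If \(\overline\mu_{\gamma_\alpha}^\alpha\) has infinite \(p\)-th moment, its distance to \(\sigma\) is \(+\infty\) and there is nothing to prove. Otherwise the proof of Theorem~\ref{thm:collective-transport} gives
\[
    W_p(\overline\mu_{\gamma_\alpha}^\alpha,\sigma)+M_p(\sigma;y)
       \geq\left(\frac1{N_\alpha}\int_Xd(x,y)^p
             h_{\gamma_\alpha}^\alpha(x)\,d\nualpha(x)\right)^{1/p}.
\]
It therefore suffices to bound the lower limit of the right side from below by \(\bigl(\varrho^{-1}\int_0^\varrho(V_y^\dagger)^p\,ds\bigr)^{1/p}\). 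Once this is done, we subtract the fixed finite quantity \(M_p(\sigma;y)\), take positive parts (legitimate since \(W_p\ge0\)), and then take the supremum over \(y\).

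For the filling step I would use the dual form of Lemma~\ref{lem:filling-duality} with \(a=d_y^p\), which is continuous with compact sublevel sets because \(X\) is proper. Since \(0\leq h_{\gamma_\alpha}^\alpha\leq1\) and \(\int h\,d\nualpha=N_\alpha\), for each \(t\geq0\) we have
\[
    \frac1{c(\alpha)}\int_Xd_y^p\,h_{\gamma_\alpha}^\alpha\,d\nualpha
       \geq\frac{N_\alpha}{c(\alpha)}t-\int_X(t-d_y^p)_+\,dm_\alpha .
\]
The integrand \((t-d_y^p)_+\) lies in \(C_c(X)\), so by \eqref{eq:measure-convergence} the right side tends to \(\varrho t-\int(t-d_y^p)_+\,d\nu\). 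Taking the lower limit and then the supremum over \(t\), Lemma~\ref{lem:filling-duality} applied to \(\nu\) identifies the bound as \(\int_0^\varrho F^\dagger_{d_y^p}(s)\,ds\). The change of radial variable from Corollary~\ref{cor:radial-filling}, now with \(\nu\) in place of \(\nualpha\), turns this into \(\int_0^\varrho(V_y^\dagger(s))^p\,ds\).

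Finally, divide by \(N_\alpha/c(\alpha)\to\varrho>0\) and take \(p\)-th roots; the map \(u\mapsto u^{1/p}\) is continuous and monotone, so it commutes with the lower limit. The main obstacle is bookkeeping rather than any analytic difficulty. When \(\varrho>\nu(X)\), both the filling cost and the bound are \(+\infty\), and the duality argument handles this case unchanged. No averaged localization enters at any point, because only the fixed-parameter occupation bound and the vague convergence of \(m_\alpha\) are used.
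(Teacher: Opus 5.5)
Your proposal is correct and follows essentially the same route as the paper. For fixed \(y\), you test the scalar filling duality against \((t-d(\cdot,y)^p)_+\in C_c(X)\) and use vague convergence to bound the normalized moment about \(y\) below by \(\int_0^\varrho(V_y^\dagger(s))^p\,ds\); you then divide by \(N_\alpha/c(\alpha)\), take \(p\)-th roots, apply the triangle inequality through \(\delta_y\), and take the supremum over \(y\). Your explicit treatment of the infinite-moment and \(\varrho>\nu(X)\) cases is bookkeeping that the paper leaves implicit.
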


\begin{proof}
For a fixed \(y\), the scalar filling duality and vague convergence
against \((t-d(\cdot,y)^p)_+\) give
\[
    \liminf_{\alpha\to\infty}\frac1{c(\alpha)}
       \int_Xd(x,y)^p h_{\gamma_\alpha}^\alpha(x)\,d\nualpha(x)
       \geq\int_0^\varrho(V_y^\dagger(s))^p\,ds.
\]
Divide by \(N_\alpha/c(\alpha)\to\varrho\), take \(p\)-th roots,
and use the triangle inequality with \(\delta_y\).
The resulting lower bound holds for every fixed \(y\), so one may
take its supremum.
\end{proof}

\begin{remark}
\label{rem:bessel-uncertainty}
The occupation argument also applies to normalized families satisfying
\[
    \sum_{j=1}^N|\langle f,\varphi_j\rangle_\alpha|^2
       \leq B\|f\|_\alpha^2.
\]
Their occupation function is at most \(B\), so scalar filling gives
\[
    \sum_{j=1}^Nq_{\alpha,a}(\varphi_j)
       \geq B\int_0^{N/B}
          \inf\{t\geq0:\nu_\alpha(\{a\leq t\})\geq s\}\,ds.
\]
Under a common integrable envelope, the corresponding bounds are
\[
    N\leq\int_X\min\{B,N\Phi_\alpha\}\,d\nualpha,\qquad
    N(1-\delta)\leq B K_{\alpha,\Phi_\alpha}(\delta).
\]
These follow by applying the preceding proofs to \(s_\alpha/B\);
equivalently, the operator
\(B^{-1}\sum_j\langle\,\cdot\,,\varphi_j\rangle_\alpha\varphi_j\)
is a positive contraction.  Thus a finite Bessel bound gives
quantitative control when exact orthogonality is weakened.
\end{remark}

The examples in the next section identify the frame volume and the relevant
moment costs in concrete spaces.

% -----------------------------------------------------------------------------
% Concrete applications
% -----------------------------------------------------------------------------

\section{Applications}
\label{sec:applications}

This section records the concrete statements supplied by the abstract
theorems.  The point is not merely that the models fit the hypotheses:
each one gives a quantitative obstruction to placing many orthogonal
states where the indicated cost is small.  In Euclidean phase space the
obstruction has the power scale \(N^{1+p/(2d)}\); in the hyperbolic ball
it has the scale \(N(\log N)^p\).  In the varying-measure examples it is
a limiting filling law, determined by the limiting density of available
states.

To make the consequences easy to locate, every subsection begins by
stating its conclusion before verifying the corresponding frame and
localization assumptions.  The verification paragraphs also identify
which conclusions are exact, which are asymptotically sharp, and which
require the additional moment hypothesis.

We write \(\omega_\ell=\pi^{\ell/2}/\Gamma(1+\ell/2)\) for the
volume of the Euclidean unit ball in \(\R^\ell\).  When a model has no
varying parameter, we suppress \(\alpha\).
For sets in Euclidean space, a van Hove family \(A_R\subset\R^\ell\)
satisfies
\[
    |A_R\mathbin{\triangle}(A_R-u)|=o(|A_R|)
    \qquad\text{for every fixed }u\in\R^\ell.
\]
Dilates of a bounded Lipschitz domain are examples.

\subsection{Bargmann--Fock space: the exact radial spectrum}
\label{subsec:application-fock}

The conclusion here is a sharp Euclidean power law, together with the
full radial spectrum: \(N\) orthogonal Fock states have quadratic or
higher radial cost at least of order
\(\alpha^{-p/2}N^{1+p/(2d)}\), and that order cannot be improved.

For \(\alpha>0\), let \(\mathcal F_\alpha^2(\C^d)\) be the space of
entire functions with norm
\[
    \|f\|_\alpha^2
       =\left(\frac{\alpha}{\pi}\right)^d
          \int_{\C^d}|f(z)|^2e^{-\alpha|z|^2}\,dV(z).
\]
Its reproducing kernel and normalized frame vectors are
\[
    K_\alpha(w,z)=e^{\alpha w\cdot\overline z},\qquad
    k_z^\alpha(w)=e^{\alpha w\cdot\overline z-\alpha|z|^2/2}.
\]
Consequently,
\begin{equation}\label{eq:fock-frame-data}
\begin{gathered}
    d\nualpha(z)=\left(\frac{\alpha}{\pi}\right)^d dV(z),
    \qquad c(\alpha)=\left(\frac{\alpha}{\pi}\right)^d,\qquad d\nu=dV,\\
    Q_\alpha(z,dw)=\left(\frac{\alpha}{\pi}\right)^d
                       e^{-\alpha|z-w|^2}\,dV(w),\qquad
    V_\alpha(r)=\frac{\alpha^d}{d!}r^{2d}.
\end{gathered}
\end{equation}
The phase-space probability measure of a unit vector is
\[
    d\mu_f^\alpha(z)
       =\left(\frac{\alpha}{\pi}\right)^d
                    |f(z)|^2e^{-\alpha|z|^2}\,dV(z).
\]
For the radial cost \(a(z)=|z|^p\), the corresponding filling inverse is
\[
    F_{\alpha,|z|^p}^{\dagger}(s)
       =(d!)^{p/(2d)}\alpha^{-p/2}s^{p/(2d)}.
\]
Here \(\mathcal E_{\alpha,|z|^p}(\gamma)\) and
\(S_{\alpha,|z|^p}(N)\) denote, respectively, the energy and
interpolated eigenvalue sum introduced above.

\begin{proposition}
\label{prop:fock-application}
For \(p>0\), let \(0\leq\gamma\leq I\) be trace class, with
\(N=\Tr\gamma\).  Then
\begin{equation}\label{eq:fock-filling-bound}
\begin{aligned}
    \mathcal E_{\alpha,|z|^p}(\gamma)
       &\geq S_{\alpha,|z|^p}(N)\\
       &\geq\int_0^N F_{\alpha,|z|^p}^{\dagger}(s)\,ds\\
       &=\frac{2d}{2d+p}\,(d!)^{p/(2d)}
          \alpha^{-p/2}N^{1+p/(2d)}.
\end{aligned}
\end{equation}
The monomials
\[
    e_\beta^\alpha(z)
       =\left(\frac{\alpha^{|\beta|}}{\beta!}\right)^{1/2}z^\beta,
       \qquad \beta\in\{0,1,\ldots\}^d,
\]
form an orthonormal eigenbasis of \(T_{|z|^p}^\alpha\), with
\begin{equation}\label{eq:fock-radial-eigenvalues}
    \lambda_{\beta,\alpha}(|z|^p)
       =\alpha^{-p/2}
          \frac{\Gamma(|\beta|+d+p/2)}{\Gamma(|\beta|+d)}.
\end{equation}
The eigenvalue at degree \(m\) has multiplicity
\(\binom{m+d-1}{d-1}\).  At fixed \(\alpha\), the last expression in
\eqref{eq:fock-filling-bound} is asymptotic to
\(\sum_{j=1}^N\lambda_{j,\alpha}(|z|^p)\) as \(N\to\infty\).
\end{proposition}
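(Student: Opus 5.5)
The chain of inequalities in \eqref{eq:fock-filling-bound} is an instance of Theorem~\ref{thm:occupation-filling} once the form is densely defined and the filling inverse is computed. For density, I will check that \(\widetilde a^{\,\alpha}(z)<\infty\) for every \(z\) when \(a=|z|^p\): by \eqref{eq:fock-frame-data}, \(Q_\alpha(z,\cdot)\) is a Gaussian and so has finite \(p\)-th moments. The sufficient test stated before Proposition~\ref{prop:radial-spectrum} then applies. Next, \(F_{\alpha,|z|^p}(t)=V_\alpha(t^{1/p})=\alpha^dt^{2d/p}/d!\), which is continuous and strictly increasing, so its inverse is \(F^\dagger(s)=(d!\,s/\alpha^d)^{p/(2d)}\), matching the displayed formula. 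The closed form then follows from \(\int_0^Ns^{p/(2d)}\,ds=\frac{2d}{2d+p}N^{1+p/(2d)}\).

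For the spectrum, I will use the rotation invariance of the symbol. Writing \(\mathcal E\) via polar coordinates in each variable, the integral
\[
\int|z|^pz^\beta\overline{z^{\beta'}}e^{-\alpha|z|^2}\,dV
\]
vanishes unless \(\beta=\beta'\), by averaging over the torus action \(z_k\mapsto e^{i\theta_k}z_k\). Hence the form is diagonal in the monomial basis, and these monomials are an orthonormal basis of \(\mathcal F_\alpha^2\). This diagonalizes \(T_{|z|^p}^\alpha\), and I will verify that the monomials span a form core because they span the polynomials, which are dense in the form norm (truncating Taylor series is a contraction in the form norm). The eigenvalue is then
\[
q(e_\beta)=\frac{\alpha^{|\beta|}}{\beta!}\Bigl(\frac\alpha\pi\Bigr)^d\int|z|^{p}|z^\beta|^2e^{-\alpha|z|^2}\,dV.
\]
To evaluate it, I will integrate over spheres, or equivalently use \(|z|^2=\sum|z_k|^2\) together with the Dirichlet integral over the simplex, which reduces the angular part to \(\beta!/(|\beta|+d-1)!\) times surface factors. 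The radial part \(\int_0^\infty r^{2|\beta|+2d-1+p}e^{-\alpha r^2}\,dr\) produces \(\Gamma(|\beta|+d+p/2)\alpha^{-(|\beta|+d+p/2)}/2\). Normalizing with the \(p=0\) case, where the value must be \(1\), gives \eqref{eq:fock-radial-eigenvalues} without bookkeeping constants. The multiplicity is the number of multi-indices of degree \(m\).

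For the asymptotic claim, I will use Corollary~\ref{cor:power-cost-sandwich}: \(\eps_{\alpha,p}\) is finite and independent of \(z\) by translation covariance (the Gaussian \(Q_\alpha\)), and \(\Halpha\) is infinite dimensional, so \eqref{eq:fixed-frame-filling-asymptotic} gives \(S\sim\int_0^NF^\dagger\). As a sanity check, or as an alternative route, the sum over degrees \(m\leq M\) of \(\binom{m+d-1}{d-1}m^{p/2}\alpha^{-p/2}\) reproduces the same constant.

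The main obstacle is the Gaussian/Dirichlet integral for the eigenvalues. The \(p=0\) normalization trick avoids computing the constants explicitly, which I expect to make it routine.
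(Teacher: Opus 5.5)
Your proposal is correct and matches the paper's proof essentially step for step. Both arguments:
\begin{itemize}
\item obtain the filling bound from $V_\alpha(r)=\alpha^d r^{2d}/d!$ via Theorem~\ref{thm:occupation-filling};
\item diagonalize the radial form in the monomials by rotation (torus) invariance, with the eigenvalues read off as a ratio of radial Gaussian integrals;
\item derive the asymptotic from Corollary~\ref{cor:power-cost-sandwich} using the translation-invariant Gaussian kernel moment.
\end{itemize}
One point deserves to be made explicit. Your Taylor-truncation claim relies on the identity $q_{\alpha,|z|^p}(f)=\sum_\beta|\langle f,e_\beta^\alpha\rangle_\alpha|^2\lambda_{\beta,\alpha}(|z|^p)$ holding for every $f$, not just for polynomials. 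This follows from the same torus averaging, applied as Parseval in the angular variables at each fixed radius vector together with Tonelli.
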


\begin{proof}
The scalar filling integral obtained from \eqref{eq:fock-frame-data} is
\[
    \int_0^N\bigl(V_\alpha^\dagger(s)\bigr)^p\,ds
       =\alpha^{-p/2}(d!)^{p/(2d)}
                       \int_0^Ns^{p/(2d)}\,ds,
\]
which proves the lower bound.  Invariance under coordinate rotations
makes the radial Toeplitz form diagonal in the monomials.  The ratio
of its radial integrals is
\[
    \frac{\int_0^\infty r^{2|\beta|+2d+p-1}e^{-\alpha r^2}\,dr}
         {\int_0^\infty r^{2|\beta|+2d-1}e^{-\alpha r^2}\,dr},
\]
giving \eqref{eq:fock-radial-eigenvalues}.  All these integrals are
finite, so the polynomials give a dense form domain.  Finally,
\begin{equation}\label{eq:fock-kernel-moments}
    \int_{\C^d}|z-w|^p\,Q_\alpha(z,dw)
       =\alpha^{-p/2}\frac{\Gamma(d+p/2)}{\Gamma(d)}.
\end{equation}
Corollary~\ref{cor:power-cost-sandwich} proves the asserted asymptotic.
\end{proof}

For an orthonormal family and \(p\geq1\),
Theorem~\ref{thm:mean-dispersion} therefore gives the explicit estimate
\[
    \sum_{j=1}^N
       \bigl[\Delta_{\alpha,p}(\varphi_j)
                     +|\mu_{\alpha,p}(\varphi_j)|\bigr]^p
       \geq\frac{2d}{2d+p}(d!)^{p/(2d)}
                    \alpha^{-p/2}N^{1+p/(2d)}.
\]
For \(p=2\), the Euclidean variance identity replaces the expression
on the left by
\(\sum_j[\Delta_{\alpha,2}(\varphi_j)^2+
|\mu_{\alpha,2}(\varphi_j)|^2]\).
In this case \eqref{eq:fock-radial-eigenvalues} reads
\[
    T_{|z|^2}^\alpha e_\beta^\alpha
       =\frac{|\beta|+d}{\alpha}e_\beta^\alpha.
\]
Thus the oscillator interpretation is exact at the level of its
number operator and the indicated additive constant.

The Gaussian kernel also verifies all the semiclassical hypotheses
for \(a(z)=|z|^p\).  Its mass outside any fixed-radius ball tends to
zero uniformly in the center.  On a compact set of centers, a moment
of order strictly larger than \(p\), together with
\eqref{eq:fock-kernel-moments}, controls the tails in
\eqref{eq:kernel-moment-tail}.  Hence
Theorem~\ref{thm:semiclassical-filling} applies for every \(p>0\).

The same Gaussian overlap gives an effective-dimension law for large
Euclidean sets.  If \(A_R\subset\C^d\) is a van Hove family, then,
for every fixed \(0<\eps<1\),
\[
    \#\left\{j:\lambda_j\bigl(T_{\mathbf1_{A_R}}^\alpha\bigr)
                         \geq1-\eps\right\}
       =\left(\frac{\alpha}{\pi}\right)^d|A_R|+o(|A_R|)
       \qquad(R\to\infty).
\]
Indeed, \(Q_\alpha(z,dw)\) is a translation-invariant probability
kernel, so its leakage across \(A_R\) is \(o(|A_R|)\) by dominated
convergence.  This is Theorem~\ref{thm:frame-concentration} in the
Bargmann--Fock model.

\subsection{Weighted Bergman space: logarithmic cost in the ball}
\label{subsec:application-bergman}

Here the conclusion changes qualitatively: hyperbolic volume grows
exponentially, so the least total \(p\)-th radial cost of \(N\)
orthogonal states is of order \(N(\log N)^p\), not a Euclidean power.

For \(\alpha>-1\), equip the holomorphic functions on \(\B^d\) with
the norm of \(L^2(\mu_\alpha)\), where
\[
    d\mu_\alpha(z)=C_{\alpha,d}(1-|z|^2)^\alpha\,dV(z),
    \qquad C_{\alpha,d}
       =\frac{\Gamma(\alpha+d+1)}{\pi^d\Gamma(\alpha+1)}.
\]
The reproducing kernel is
\[
    K_\alpha(z,w)=(1-z\cdot\overline w)^{-(\alpha+d+1)},
\]
so the frame measure is
\begin{equation}\label{eq:bergman-frame-data}
    d\nualpha(z)=C_{\alpha,d}\frac{dV(z)}{(1-|z|^2)^{d+1}},
    \qquad c(\alpha)=C_{\alpha,d},\qquad
    d\nu(z)=\frac{dV(z)}{(1-|z|^2)^{d+1}}.
\end{equation}
Use the invariant distance with normalization
\[
    d_{\B}(z,w)=\operatorname{arctanh}|\phi_w(z)|,\qquad
    d_{\B}(0,z)=\operatorname{arctanh}|z|,
\]
where \(\phi_w\) is a ball automorphism interchanging \(0\) and \(w\).
This specifies the metric scale in all constants below.
Polar coordinates give
\begin{equation}\label{eq:bergman-ball-volume}
\begin{gathered}
    V_\alpha(r)=A_{\alpha,d}\sinh^{2d}r,\qquad
    A_{\alpha,d}=\frac{\Gamma(\alpha+d+1)}{d!\Gamma(\alpha+1)},\\
    V_\alpha^\dagger(s)
       =\operatorname{arsinh}\!
          \left((s/A_{\alpha,d})^{1/(2d)}\right).
\end{gathered}
\end{equation}
Indeed, the substitution \(t=|z|^2\) reduces the volume calculation to
the antiderivative
\(d^{-1}[t/(1-t)]^d\).

\begin{proposition}
\label{prop:bergman-application}
Put \(r_0(z)=d_{\B}(0,z)\).  For \(p>0\), write
\(N=n+\theta\), where \(n\in\mathbb N\cup\{0\}\) and
\(0\leq\theta<1\).  Then
\begin{equation}\label{eq:bergman-filling}
    \sum_{j=1}^n\lambda_{j,\alpha}(r_0^p)
       +\theta\lambda_{n+1,\alpha}(r_0^p)
       \geq\int_0^N
          \left[
            \operatorname{arsinh}\!
                 \left((s/A_{\alpha,d})^{1/(2d)}\right)
          \right]^p ds.
\end{equation}
At each fixed \(\alpha>-1\), this bound is asymptotically sharp:
\begin{equation}\label{eq:bergman-large-family}
    \sum_{j=1}^N\lambda_{j,\alpha}(r_0^p)
       \sim \frac{N(\log N)^p}{(2d)^p}
       \qquad(N\to\infty).
\end{equation}
The weak-measure, averaged-localization, and kernel-tail hypotheses
hold as \(\alpha\to\infty\) for these same symbols.
\end{proposition}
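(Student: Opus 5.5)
The lower bound \eqref{eq:bergman-filling} is Corollary~\ref{cor:radial-filling} with \(\rho(r)=r^p\) and \(e=0\), once the radial form \(q_{\alpha,r_0^p}\) is known to be densely defined and the volume \eqref{eq:bergman-ball-volume} is in hand. For the volume, pass to polar coordinates with \(t=|z|^2\). The invariant ball \(\{r_0<r\}\) is \(\{|z|<\tanh r\}\), and \(\tanh^2 r/(1-\tanh^2 r)=\sinh^2 r\). Combined with the antiderivative \(d^{-1}[t/(1-t)]^d\) and the constant \(C_{\alpha,d}\pi^d/(d-1)!\), this gives \(A_{\alpha,d}\sinh^{2d}r\); inverting yields \(V_\alpha^\dagger\). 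For density of the form domain, use the sufficient test after Section~\ref{sec:radial-spectra}'s radial assumptions: check \(\widetilde{r_0^p}^{\,\alpha}(z)<\infty\) for every \(z\). Automorphism covariance of the normalized kernels reduces this to \(z=0\), where \(Q_\alpha(0,dw)=C_{\alpha,d}(1-|w|^2)^{\alpha}dV(w)\). Finiteness of \(\int(\operatorname{arctanh}|w|)^p(1-|w|^2)^\alpha dV\) is clear, since the logarithmic singularity at the boundary is integrable against \((1-|w|^2)^\alpha\) when \(\alpha>-1\).

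For the sharp asymptotic \eqref{eq:bergman-large-family}, I would first treat \(p\ge1\) by Corollary~\ref{cor:power-cost-sandwich}, and \(0<p<1\) by its fractional version. This needs \(\eps_{\alpha,p}<\infty\). Since the group of ball automorphisms acts transitively by isometries of \(d_{\B}\), preserves \(\nualpha\), and moves kernels up to phase, the essential supremum equals the centered moment \(\int r_0^p\,dQ_\alpha(0,\cdot)\), which is finite by the computation above. Then \eqref{eq:fixed-frame-filling-asymptotic} gives \(S\sim\int_0^N(V_\alpha^\dagger)^p\). Since \(\operatorname{arsinh}(x)=\log(2x)+o(1)\), we have \(V_\alpha^\dagger(s)=\frac{1}{2d}\log s+O(1)\), so \(\int_0^N (V_\alpha^\dagger)^p\sim N(\log N)^p/(2d)^p\) by a routine integration (the \(O(1)\) shift and the region near \(0\) are lower order).

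For the semiclassical claims as \(\alpha\to\infty\), \(m_\alpha=\nu\) exactly, so \eqref{eq:measure-convergence} is trivial. By invariance, both averaged localization and the tail condition reduce to statements about the single law \(Q_\alpha(0,\cdot)\), composed with automorphisms. This law is the probability measure \(C_{\alpha,d}(1-|w|^2)^\alpha dV\), which concentrates at \(0\) as \(\alpha\to\infty\). Hence \(Q_\alpha(z,X\setminus B(z,\eps))=Q_\alpha(0,\{r_0\ge\eps\})\to0\) uniformly in \(z\), and integrating over compact \(K\) gives \eqref{eq:averaged-localization}. For \eqref{eq:kernel-moment-tail}, with \(z\in K\), use \(r_0(w)\le r_0(z)+d_{\B}(z,w)\). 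This bounds the tail by a uniform moment of order \(q>p\) of \(Q_\alpha(0,\cdot)\) times \(R^{-(q-p)/p}\). Those moments are bounded, indeed decreasing to \(0\), in \(\alpha\), via an explicit Beta-type integral.

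I expect the main obstacle to be this uniform-in-\(\alpha\) moment bound, together with justifying the covariance of the kernel under automorphisms for the invariant metric normalization. I would first try the explicit one-variable integral \(\int_0^1(\operatorname{arctanh}\sqrt t)^q t^{d-1}(1-t)^\alpha dt\), normalized by the Beta function, and show it tends to \(0\).
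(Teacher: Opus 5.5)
Your proposal is correct and follows essentially the same route as the paper. The lower bound comes from the ball-volume formula via Corollary~\ref{cor:radial-filling}, and sharpness from Corollary~\ref{cor:power-cost-sandwich}, with the finite centered moment \(\eps_{\alpha,p}\) supplied by automorphism invariance and \(Q_\alpha(0,\cdot)=\mu_\alpha\). The semiclassical hypotheses follow from exact equality \(m_\alpha=\nu\), concentration of \(\mu_\alpha\) at the origin, and the triangle inequality combined with a higher moment bound. The differences are minor: you check form-domain density through the kernel vectors rather than polynomials, and you plan to get the uniform-in-\(\alpha\) hyperbolic moment bound from the explicit Beta integral, whereas the paper uses stochastic domination of the Beta\((d,\alpha+1)\) laws by the law at a fixed \(\alpha_0\).
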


\begin{proof}
The lower bound follows from \eqref{eq:bergman-ball-volume}.
Both the frame measure and the absolute kernel overlap are invariant
under ball automorphisms.  At the origin \(k_0^\alpha=1\), and
\(Q_\alpha(0,dw)=d\mu_\alpha(w)\).  Thus
\[
    \eps_{\alpha,p}^p
       =\int_{\B^d}
           \bigl(\operatorname{arctanh}|w|\bigr)^p\,d\mu_\alpha(w)
       <\infty.
\]
Finiteness follows from the integrability of every power of
\(|\log(1-|w|)|\) against \((1-|w|^2)^\alpha dV\).
The same observation places all holomorphic polynomials in the form
domain.  Corollary~\ref{cor:power-cost-sandwich} now gives asymptotic
equality in \eqref{eq:bergman-filling}.  Since
\[
    \operatorname{arsinh}\!
           \left((s/A_{\alpha,d})^{1/(2d)}\right)
       =\frac{\log s}{2d}+O_{\alpha,d}(1),
       \qquad s\to\infty,
\]
integration proves \eqref{eq:bergman-large-family}.

For the semiclassical assertions, under \(\mu_\alpha\) the variable
\(|w|^2\) has the beta distribution with parameters \(d,\alpha+1\).
It tends to zero in probability as \(\alpha\to\infty\).
For \(\alpha\geq\alpha_0>-1\) these distributions are stochastically
dominated by the distribution at \(\alpha_0\).  Their hyperbolic
moments of every fixed order are consequently uniformly integrable
and tend to zero.  Invariance gives localization uniformly in the
center.  The triangle inequality
\(r_0(w)\leq r_0(z)+d_{\B}(z,w)\), followed by a higher moment bound,
gives \eqref{eq:kernel-moment-tail} on compact sets of centers.
The measure convergence is already exact in
\eqref{eq:bergman-frame-data}.
\end{proof}

In particular, for \(p=1\) every orthonormal family satisfies
\[
    \sum_{j=1}^N
       \bigl[\Delta_\alpha(\varphi_j)
                     +d_{\B}(0,\mu_\alpha(\varphi_j))\bigr]
       \geq \int_0^N
          \operatorname{arsinh}\!
                 \left((s/A_{\alpha,d})^{1/(2d)}\right)ds.
\]
The \(N\log N\) scale comes from exponential growth of hyperbolic
volume.  It is the counterpart of the power law in Fock space.

\subsection{Gabor frames and anisotropic phase-space costs}
\label{subsec:application-gabor}

The conclusion is a collective time--frequency uncertainty principle:
an orthonormal family of \(N\) signals cannot all have small
spectrogram moments.  In particular, the lower bounds below are
independent of the window; a moment assumption on the window is needed
only for their asymptotic sharpness.

We use the Fourier convention
\(\widehat f(\xi)=\int_{\R^d}f(t)e^{-2\pi i t\cdot\xi}\,dt\).
Fix \(g\in L^2(\R^d)\) with \(\|g\|_2=1\), and put
\[
    k_{(x,\xi)}=M_\xi T_xg,\qquad
    V_gf(x,\xi)=\langle f,M_\xi T_xg\rangle,
\]
where \(T_xg(t)=g(t-x)\) and
\(M_\xi g(t)=e^{2\pi i\xi\cdot t}g(t)\).
Plancherel and Fubini give Moyal's identity
\[
    \int_{\R^{2d}}|V_gf(z)|^2\,dz=\|f\|_2^2.
\]
Thus \(d\nu(z)=dz\) is the normalized frame measure.  Equivalently,
the isometric image \(V_gL^2(\R^d)\) is an RKHS to which the preceding
results apply.  Its phase-space measures are
\(|V_gf(z)|^2dz\).

\paragraph{Radial and anisotropic consequences.}
For every orthonormal family \(f_1,\ldots,f_N\) and \(p>0\),
\begin{equation}\label{eq:gabor-occupation}
    \sum_{j=1}^N\int_{\R^{2d}}|z|^p|V_gf_j(z)|^2\,dz
       \geq \frac{2d}{2d+p}\omega_{2d}^{-p/(2d)}
                         N^{1+p/(2d)}.
\end{equation}
This follows directly from \(V(r)=\omega_{2d}r^{2d}\), and requires
no decay assumption on \(g\).  If a moment on the left is infinite,
the inequality retains its extended-value meaning.
If
\[
    \int_{\R^{2d}}|z|^p|V_gg(z)|^2\,dz<\infty,
\]
the overlap kernel has a finite uniform \(p\)-th moment.  Its coherent
vectors belong to the form domain and span a dense subspace.
Corollary~\ref{cor:power-cost-sandwich} then makes the lower bound
asymptotically sharp for the radial spectral sums.
For the normalized Gaussian
\(g(t)=2^{d/4}e^{-\pi|t|^2}\), the Bargmann transform identifies this
model with \(\mathcal F_\pi^2(\C^d)\), up to an isometric reflection
of the phase-space coordinates.  Formula
\eqref{eq:fock-radial-eigenvalues} gives its exact radial spectrum.

The general concentration theorem gives a time--frequency
effective-dimension law.  If \(A_R\subset\R^{2d}\) is a van Hove family,
then, for every fixed
\(0<\eps<1\),
\[
    \#\left\{j:\lambda_j\bigl(T_{\mathbf1_{A_R}}\bigr)
                         \geq1-\eps\right\}
       =|A_R|+o(|A_R|)
       \qquad(R\to\infty).
\]
Indeed, the overlap kernel is a translate of
\(|V_gg|^2\,dz\), which is an integrable probability density by Moyal's
identity.  The leakage condition in
Theorem~\ref{thm:frame-concentration} therefore follows by dominated
convergence.  No moment condition on \(g\) is needed for this count.

The filling argument also allows costs that are not powers of a
metric.  For
\[
    a(x,\xi)=|x|^p+|\xi|^q,\qquad p,q>0,
\]
put
\[
    \kappa=\frac d p+\frac d q,\qquad
    C_{p,q,d}
       =\omega_d^2
          \frac{\Gamma(1+d/p)\Gamma(1+d/q)}
               {\Gamma(1+d/p+d/q)}.
\]
Integration first in \(\xi\), and then in the radial variable \(x\),
gives
\(\nu(\{a\leq t\})=C_{p,q,d}t^\kappa\).  Hence
\begin{equation}\label{eq:gabor-anisotropic}
    \sum_{j=1}^N\int_{\R^{2d}}a(z)|V_gf_j(z)|^2\,dz
       \geq\frac{\kappa}{\kappa+1}
                  C_{p,q,d}^{-1/\kappa}N^{1+1/\kappa}.
\end{equation}
For \(p=q=2\), a centered window with finite second moments satisfies
\[
\begin{split}
    \int_{\R^{2d}}(|x|^2+|\xi|^2)|V_gf(x,\xi)|^2\,dx\,d\xi
       ={}&\int_{\R^d}|t|^2|f(t)|^2\,dt
           +\int_{\R^d}|\xi|^2|\widehat f(\xi)|^2\,d\xi\\
          &+\int_{\R^d}|t|^2|g(t)|^2\,dt
           +\int_{\R^d}|\xi|^2|\widehat g(\xi)|^2\,d\xi
\end{split}
\]
for \(\|f\|_2=1\); centered means that the position and frequency means
of \(g\) vanish.  This identity follows from the two marginals of the
spectrogram.

The Gaussian window makes the comparison with the \(p=2\) result of
Malinnikova \cite[Theorem~3]{Malinnikova} completely explicit.  Set
\[
    g_0(t)=2^{d/4}e^{-\pi|t|^2}.
\]
Then
\[
    \int_{\R^d}|t|^2|g_0(t)|^2\,dt
       +\int_{\R^d}|\xi|^2|\widehat g_0(\xi)|^2\,d\xi
       =\frac{d}{2\pi}.
\]
For \(N\geq1\), let \(m\) be determined by
\[
    \binom{m+d-1}{d}<N\leq\binom{m+d}{d},
    \qquad r=N-\binom{m+d-1}{d}.
\]
Every orthonormal family \(f_1,\ldots,f_N\) in \(L^2(\R^d)\) then
satisfies
\[
\begin{aligned}
 &\sum_{j=1}^N\left(
       \int_{\R^d}|t|^2|f_j(t)|^2\,dt
       +\int_{\R^d}|\xi|^2|\widehat f_j(\xi)|^2\,d\xi\right)\\
 &\qquad\geq\frac{1}{\pi}\left[
       d\binom{m+d-1}{d+1}
       +\frac d2\binom{m+d-1}{d}
       +r\left(m+\frac d2\right)\right].
\end{aligned}
\]
Indeed, the Bargmann identification turns the phase-space quadratic
cost into \(T_{|z|^2}^{\pi}\), with eigenvalues
\((|\beta|+d)/\pi\).  The preceding identity subtracts
\(d/(2\pi)\) from each occupied level.  Thus the displayed right
side is the exact sum of the first \(N\) oscillator eigenvalues, and
equality is attained by filling those levels.  Malinnikova's theorem
gives the left-hand side above the lower bound
\(C_dN^{1+1/d}\) for some \(C_d>0\).  The present formula is its
sharp finite-\(N\) version: it has leading term
\[
    \frac{d}{d+1}\frac{(d!)^{1/d}}{\pi}N^{1+1/d},
\]
and makes both the coefficient and the finite-level correction
explicit.  When \(d=1\), it reduces to \(N^2/(2\pi)\).

The Riesz-basis obstructions in
\cite{GrochenigMalinnikova} use additional information about
completeness and localization; they are not consequences of a bound
on the number of states in a fixed region alone.

Thus \eqref{eq:gabor-occupation} and
\eqref{eq:gabor-anisotropic} are the promised results in this model:
they give explicit lower bounds for the sum of the time--frequency
costs of \emph{every} orthonormal family.  The Gaussian window further
turns the radial statement into the exact Fock spectrum, while a
finite ambiguity-function moment turns the radial lower bound into a
sharp asymptotic formula for the lowest \(N\) eigenvalues.

A semiclassical family is obtained without changing the underlying
Hilbert space.  For \(\alpha>0\), set
\[
    k_{(x,\xi)}^\alpha(t)
       =\alpha^{d/4}e^{2\pi i\alpha\xi\cdot t}
                             g\bigl(\sqrt\alpha(t-x)\bigr).
\]
Then
\[
\begin{gathered}
    d\nualpha(z)=\alpha^d\,dz,\qquad c(\alpha)=\alpha^d,\\
    Q_\alpha(z,dw)
       =\alpha^d
          |V_gg(\sqrt\alpha(w-z))|^2\,dw.
\end{gathered}
\]
Since \(|V_gg|^2\) is an integrable probability density, these kernels
localize.  If its \(p\)-th moment is finite, scaling and truncating
that integrable moment verify \eqref{eq:kernel-moment-tail} for
\(a(z)=|z|^p\).  Thus the weak-measure formulation recovers the usual
semiclassical scaling of Gabor localization operators.

\subsection{Paley--Wiener spaces and the role of bandwidth}
\label{subsec:application-paley-wiener}

The conclusion is a bandwidth-sensitive uncertainty principle.  A
band-limited orthonormal family has total spatial \(p\)-moment at least
the explicit quantity in \eqref{eq:pw-moment-bound}; equivalently, a
region \(A\) can support at most \(|\Omega||A|/(1-\eps)\) nearly
concentrated orthogonal signals.  The associated concentration operator
also recovers the leading time--bandwidth dimension count.

Let \(\Omega\subset\R^d\) be bounded and measurable, with
\(0<|\Omega|<\infty\), and let
\[
    PW_\Omega
       =\{f\in L^2(\R^d):\supp\widehat f\subset\Omega
                                      \text{ up to null sets}\}.
\]
We use the continuous representatives supplied by Fourier inversion.
The reproducing kernel and frame measure are
\[
\begin{gathered}
    K_\Omega(x,y)
       =\int_\Omega e^{2\pi i(x-y)\cdot\xi}\,d\xi,\qquad
    d\nu_\Omega(x)=|\Omega|\,dx,\\
    Q_\Omega(x,dy)
       =\frac{|K_\Omega(x,y)|^2}{|\Omega|}\,dy,\qquad
    d\mu_f(x)=|f(x)|^2\,dx.
\end{gathered}
\]
Writing
\[
    q_\Omega(u)=\frac{|\check{\mathbf1}_\Omega(u)|^2}{|\Omega|},
    \qquad \int_{\R^d}q_\Omega(u)\,du=1,
\]
we have \(Q_\Omega(x,dy)=q_\Omega(y-x)\,dy\).  The normalization is
Plancherel's theorem.
The actual bandwidth appears in the volume
\(V_\Omega(r)=|\Omega|\omega_dr^d\).  For every orthonormal family,
\begin{equation}\label{eq:pw-moment-bound}
    \sum_{j=1}^N\int_{\R^d}|x|^p|f_j(x)|^2\,dx
       \geq\frac d{d+p}(|\Omega|\omega_d)^{-p/d}N^{1+p/d},
       \qquad p>0.
\end{equation}
The corresponding center-dispersion bound for \(p\geq1\) follows
from Theorem~\ref{thm:mean-dispersion}.
For a measurable set \(A\) of finite volume, let
\[
    C_{\Omega,A}=P_\Omega\mathbf1_AP_\Omega\big|_{PW_\Omega},
\]
where \(P_\Omega\) is the orthogonal projection onto \(PW_\Omega\).
Theorem~\ref{thm:frame-concentration} gives
\[
\begin{aligned}
 |\Omega||A|
    -\frac1\eps\int_A\int_{\R^d\setminus A}
                         |K_\Omega(x,y)|^2\,dy\,dx
 &\leq\#\{j:\lambda_j(C_{\Omega,A})\geq1-\eps\}\\
 &\leq |\Omega||A|
    +\frac1{1-\eps}\int_A\int_{\R^d\setminus A}
                         |K_\Omega(x,y)|^2\,dy\,dx.
\end{aligned}
\]
Thus, if each \(f_j\) has at least \(1-\eps\) of its squared norm in
\(A\), then
\[
    N(1-\eps)\leq|\Omega|\,|A|.
\]
For a van Hove family \(A_R\),
\[
 \frac{1}{|\Omega||A_R|}
    \int_{A_R}\int_{\R^d\setminus A_R}|K_\Omega(x,y)|^2\,dy\,dx
   =\int_{\R^d}q_\Omega(u)
       \frac{|A_R\setminus(A_R-u)|}{|A_R|}\,du
   \longrightarrow0.
\]
Indeed, this is dominated convergence and the defining translation
property of \(A_R\).  Hence
\[
    \#\{j:\lambda_j(C_{\Omega,A_R})\geq1-\eps\}
       =|\Omega||A_R|+o(|A_R|)
       \qquad(R\to\infty).
\]
This is the Landau--Slepian--Pollak effective-dimension statement in
the present normalization \cite{LandauPollak}.  For \(\Omega=[-W,W]\)
and an interval of length \(T\), its leading term is \(2WT\).

For a rectangular band and \(0<p<1\), the moment lower bound is also
sharp as \(N\to\infty\); for \(p\geq1\), the lower bound and the
time--bandwidth capacity remain valid, but the sinc tails prevent this
particular kernel-moment argument from proving a matching upper bound.

For \(\Omega_\alpha=\alpha\Omega\), take
\(c(\alpha)=|\Omega_\alpha|\) and \(d\nu=dx\).  With
the probability density \(q_\Omega\) just introduced,
the overlap kernels are
\[
    Q_\alpha(x,dy)=\alpha^d q_\Omega(\alpha(x-y))\,dy.
\]
Their mass outside any fixed-radius ball tends to zero.  Thus the
bounded-symbol trace results apply for every such \(\Omega\).
For unbounded moment symbols, an additional check is essential.

Suppose, for example,
\(\Omega=\prod_{\ell=1}^d[-a_\ell/2,a_\ell/2]\), with \(a_\ell>0\).
Writing \(\operatorname{sinc}t=\sin(\pi t)/(\pi t)\), its Shannon
basis is
\[
    \varphi_{\boldsymbol n}(x)
       =\prod_{\ell=1}^d
          \sqrt{a_\ell}\,
          \operatorname{sinc}(a_\ell x_\ell-n_\ell),
       \qquad \boldsymbol n\in\mathbb Z^d.
\]
The squared sinc tail has finite \(p\)-th moment exactly when
\(0<p<1\).  The same threshold holds for the radial moments of \(q\):
the upper bound follows from
\(|u|^p\leq\sum_\ell|u_\ell|^p\) for \(p<1\), and divergence for
\(p\geq1\) follows by restricting the other coordinates to a bounded
set of positive measure.  Thus these basis vectors have infinite
first moments.  They cannot be used as examples with bounded
first-moment dispersion.

For \(0<p<1\), the finite kernel moment yields both the fixed-bandwidth
asymptotic
\[
    \sum_{j=1}^N\lambda_{j,\Omega}(|x|^p)
       \sim\frac d{d+p}(|\Omega|\omega_d)^{-p/d}N^{1+p/d}
       \qquad(N\to\infty)
\]
and the semiclassical filling law for \(\Omega_\alpha\).
The form domains are dense: inverse Fourier transforms of smooth
functions compactly supported in the interior of the rectangle give
a dense Schwartz subspace.  This density also holds for all \(p>0\);
what fails for \(p\geq1\) is the kernel moment hypothesis, since
\(\widetilde{|x|^p}^{\,\alpha}(x)=\infty\).
The exact lower bound \eqref{eq:pw-moment-bound} remains valid.

\subsection{Orthogonal polynomials and determinantal ensembles}
\label{subsec:application-oprl}

Let \(\mu\) be a finite positive measure with infinite compact support
\(E\subset\R\), and let \(p_0,p_1,\ldots\) be its orthonormal
polynomials, with positive leading coefficients.  Put
\[
    \mathcal H_n=\operatorname{span}\{p_0,\ldots,p_{n-1}\},
    \qquad K_n(x,y)=\sum_{j=0}^{n-1}p_j(x)p_j(y).
\]
The normalized kernel vectors give the frame data
\[
    d\nu_n(x)=K_n(x,x)\,d\mu(x),\qquad
    c(n)=n,\qquad dm_n(x)=\frac1nK_n(x,x)\,d\mu(x).
\]
In particular \(m_n\) is a probability measure.  The diagonal is
strictly positive because \(p_0\) is a nonzero constant.
For a bounded symbol \(\sigma\),
\[
    T_\sigma^n=P_nM_\sigma P_n\big|_{\mathcal H_n},
\]
where \(P_n\) is the orthogonal projection in \(L^2(\mu)\) onto
\(\mathcal H_n\).

\paragraph{Filling consequence.}
Whenever \(m_n\rightharpoonup\nu\), the first \(N_n\sim\varrho n\)
orthogonal-polynomial modes fill the lowest \(\nu\)-volume fraction of
the cost:
\[
    \frac1n\sum_{j=1}^{N_n}\lambda_{j,n}(a)
       \longrightarrow\int_0^\varrho F_a^\dagger(s)\,ds,
    \qquad F_a(t)=\nu(\{a\leq t\}).
\]
For a regular measure on \([-1,1]\) and \(a(x)=|x|^p\), this is the
explicit arcsine law displayed in \eqref{eq:oprl-filling-limit}.  The
proposition below supplies the localization input; it is the mechanism
that turns weak convergence of Christoffel measures into this spectral
filling statement.

\begin{proposition}
\label{prop:oprl-localization}
Write the three-term recurrence as
\[
    xp_j=a_{j+1}p_{j+1}+b_jp_j+a_jp_{j-1},\qquad a_0=0.
\]
Then for every \(\eps>0\),
\begin{equation}\label{eq:oprl-localization-bound}
    \int_E Q_n(x,\{y:|x-y|\geq\eps\})\,dm_n(x)
       \leq\frac{2a_n^2}{n\eps^2}.
\end{equation}
Consequently, along every subsequence on which \(m_n\) converges
weakly, both hypotheses
\eqref{eq:measure-convergence}--\eqref{eq:averaged-localization}
hold.  The same estimate holds for measures \(\mu_n\) varying with
\(n\), provided their moments exist and their supports are infinite;
averaged localization follows whenever \((a_n^{(n)})^2/n\to0\).
\end{proposition}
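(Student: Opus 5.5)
Chebyshev's inequality reduces the claim to a second-moment bound: for each \(x\),
\[
    Q_n(x,\{y:|x-y|\geq\eps\})
       \leq\eps^{-2}\int_E(x-y)^2\,Q_n(x,dy),
\]
so it suffices to show that
\[
    \frac1n\int_E\int_E(x-y)^2K_n(x,y)^2\,d\mu(y)\,d\mu(x)
       =\frac{2a_n^2}{n},
\]
with equality, in fact. Here \(Q_n(x,dy)=K_n(x,y)^2\,d\mu(y)/K_n(x,x)\) and \(dm_n=n^{-1}K_n(x,x)\,d\mu\), so the normalizations cancel and we are left with the Dirichlet-type double integral of \eqref{eq:dirichlet-trace-defect} for the symbol \(\sigma(x)=x\). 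By that identity, the double integral equals \(2\,\Tr\bigl(T^n_{x^2}-(T^n_x)^2\bigr)\). Alternatively one can compute it directly: expand \((x-y)^2K_n(x,y)^2\) and use reproducing identities to obtain \(2\bigl(\Tr P_nM_x^2P_n-\Tr(P_nM_xP_n)^2\bigr)\).

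Next, evaluate this trace via the Jacobi matrix. In the basis \(p_0,\ldots,p_{n-1}\), \(M_x\) acts as the tridiagonal Jacobi matrix \(J\), and \(P_nM_xP_n\) is its truncation \(J_n\). Thus
\[
    \Tr(P_nM_x^2P_n)=\sum_{j<n}\|xp_j\|^2
       =\sum_{j<n}\bigl(a_{j+1}^2+b_j^2+a_j^2\bigr),
    \qquad
    \Tr J_n^2=\sum_{j<n}b_j^2+2\sum_{1\leq j<n}a_j^2.
\]
The difference telescopes to \(a_n^2\), the single coupling that leaks out of \(\mathcal H_n\); equivalently it is \(\|(I-P_n)M_xP_n\|_{\mathfrak S_2}^2=\|a_np_n\|^2\). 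This gives the integral \(2a_n^2/n\) and proves \eqref{eq:oprl-localization-bound}.

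For the consequences: on a subsequence with \(m_n\rightharpoonup\nu\), \eqref{eq:measure-convergence} holds by assumption. Since \(E\) is compact, the recurrence coefficients satisfy \(a_n\leq\sup_E|x|\), so \(2a_n^2/(n\eps^2)\to0\). The bound already integrates over all of \(E\), hence over every compact \(K\), which gives \eqref{eq:averaged-localization}. For \(n\)-dependent measures \(\mu_n\), the same algebra uses only the recurrence for the \(n\)-th family, which needs finite moments and infinite support so that \(p_0,\ldots,p_n\) exist. It yields the bound with \((a_n^{(n)})^2\), and localization follows when this is \(o(n)\).

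The main obstacle is the bookkeeping in the trace identity: confirming that all diagonal \(b_j\) terms and interior \(a_j\) terms cancel, so that exactly one \(a_n^2\) survives, with the factor \(2\) coming from the symmetrization in \eqref{eq:dirichlet-trace-defect}. Finiteness is not an issue, since everything is finite-dimensional and \(x\) is bounded on \(E\). In the varying-measure case \(E\) may be noncompact, but finite moments still make \(x\in L^2\) and the computation goes through.
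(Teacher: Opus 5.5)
Your proof is correct. It reaches the same exact identity $\iint(x-y)^2K_n(x,y)^2\,d\mu(x)\,d\mu(y)=2a_n^2$ as the paper, and the Chebyshev reduction and the passage to the consequences are the same, but you evaluate the second moment by a different computation.

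The paper uses the Christoffel--Darboux formula $(x-y)K_n(x,y)=a_n[p_n(x)p_{n-1}(y)-p_{n-1}(x)p_n(y)]$. Squaring and integrating against orthonormal $p_{n-1},p_n$ gives $2a_n^2$ in one line, with no traces or cancellations to track. You instead identify the double integral with $2\Tr(T^n_{x^2}-(T^n_x)^2)$ and evaluate this through the Jacobi matrix, where the telescoping (using $a_0=0$) leaves $a_n^2$.

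The two computations express the same fact in different language. The function $(x-y)K_n(x,y)$ is the kernel of the commutator $[M_x,P_n]$, whose squared Hilbert--Schmidt norm is $2\|(I-P_n)M_xP_n\|_{\mathfrak S_2}^2$. The identity $(I-P_n)M_xP_n=a_n\,p_n\otimes p_{n-1}$ is exactly Christoffel--Darboux in operator form.

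The paper's route is shorter and makes the rank-two boundary structure visible at once. Yours ties the localization constant directly to the trace defect \eqref{eq:dirichlet-trace-defect}, to the ensemble variance $\operatorname{Var}L_n(x)=a_n^2$, and to the decomposition $T^n_{x^2}=J_n^2+a_n^2\,p_{n-1}\otimes p_{n-1}$ that the paper uses later for the GUE.

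One small point: \eqref{eq:dirichlet-trace-defect} is stated only for bounded symbols. In the varying-measure case with noncompact support you should therefore rely on your direct expansion, which needs only the moments of $\mu_n$ up to order $2n$, as you indicate at the end.
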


\begin{proof}
The Christoffel--Darboux identity gives
\[
    (x-y)K_n(x,y)
       =a_n\bigl[p_n(x)p_{n-1}(y)-p_{n-1}(x)p_n(y)\bigr].
\]
Orthogonality therefore yields
\begin{equation}\label{eq:oprl-overlap-second-moment}
    \iint (x-y)^2|K_n(x,y)|^2\,d\mu(x)d\mu(y)=2a_n^2.
\end{equation}
Since
\[
    dm_n(x)Q_n(x,dy)
       =\frac1n|K_n(x,y)|^2\,d\mu(x)d\mu(y),
\]
Chebyshev's inequality proves \eqref{eq:oprl-localization-bound}.
For fixed compact support, \(a_n\leq\|M_x\|<\infty\).
The calculation is unchanged when the orthogonal polynomials and the
measure depend on \(n\).
\end{proof}

Simon's theorem \cite{SimonCD} identifies the subsequential weak
limits of \(m_n\) with those of the normalized zero-counting measures
of \(p_n\).  If \(E\) has positive logarithmic capacity and \(\mu\)
is regular in the sense of Stahl--Totik, these measures converge to
the equilibrium measure of \(E\).  Thus one needs a limiting
Christoffel measure, rather than a pointwise limit of the kernel
diagonal.

Suppose \(m_n\rightharpoonup\nu\).  For every continuous \(a:E\to[0,\infty)\),
all form and tail conditions are automatic.  If \(N_n/n\to\varrho\)
with \(0<\varrho<1\), Theorem~\ref{thm:semiclassical-filling} gives
\begin{equation}\label{eq:oprl-filling-limit}
    \frac1n\sum_{j=1}^{N_n}\lambda_{j,n}(a)
       \longrightarrow\int_0^\varrho F_a^\dagger(s)\,ds,
    \qquad F_a(t)=\nu(\{a\leq t\}).
\end{equation}
For example, \(a(x)=|x-e|^p\) gives the collective price of
concentrating orthogonal polynomials near \(e\).
When \(\mu\) is regular on \([-1,1]\) and \(e=0\), the limiting
measure is \(d\nu(x)=dx/(\pi\sqrt{1-x^2})\), so
\[
    \frac1n\sum_{j=1}^{N_n}\lambda_{j,n}(|x|^p)
       \longrightarrow
          \int_0^\varrho\sin^p\left(\frac{\pi s}{2}\right)\,ds.
\]
The varying frame measure here is essential: it is
\(K_n(x,x)d\mu(x)\), not a prescribed scalar multiple of a fixed
measure.

\subsubsection*{The same kernel as a particle process}

The same conclusion has a probabilistic form.  The expected number of
particles in \(A\) is exactly its capacity for orthogonal states, as
made precise by \eqref{eq:oprl-particle-capacity}; normalized linear
statistics converge to their equilibrium average.

The probability law
\begin{equation}\label{eq:oprl-determinantal-law}
    \frac1{n!}\det[K_n(x_i,x_j)]_{i,j=1}^n
                  \prod_{i=1}^n d\mu(x_i)
\end{equation}
is the orthogonal-polynomial determinantal ensemble.  Its one-point
intensity is precisely \(\nu_n\):
\[
    \mathbb E\,\#\{i:x_i\in A\}=\nu_n(A).
\]
For a real bounded symbol \(\sigma\), put
\(L_n(\sigma)=\sum_{i=1}^n\sigma(x_i)\).
The one- and two-point correlation identities imply
\[
\begin{aligned}
    \operatorname{Var}L_n(\sigma)
       &=\Tr\bigl(T_{\sigma^2}^n-(T_\sigma^n)^2\bigr)\\
       &=\frac12\iint|\sigma(x)-\sigma(y)|^2
                         |K_n(x,y)|^2\,d\mu(x)d\mu(y).
\end{aligned}
\]
Indeed, the two-point density is
\[
    K_n(x,x)K_n(y,y)-|K_n(x,y)|^2.
\]
Expanding the variance and using
\(\int|K_n(x,y)|^2d\mu(y)=K_n(x,x)\) gives the second line.
This is exactly the trace defect from
\eqref{eq:dirichlet-trace-defect}.  In particular, if
\(\sigma\in C(E)\), then
\[
    \frac1n\operatorname{Var}L_n(\sigma)\longrightarrow0,\qquad
    \frac1nL_n(\sigma)\longrightarrow\int_E\sigma\,d\nu
                                      \quad\hbox{in }L^2.
\]
For Lipschitz \(\sigma\), \eqref{eq:oprl-overlap-second-moment}
also gives the finite-\(n\) estimate
\(\operatorname{Var}L_n(\sigma)\leq
a_n^2\Lip(\sigma)^2\).
The finer fluctuation results in \cite{BreuerDuits} concern a scale
below this first-order trace limit.

The occupation inequality now has a direct probabilistic reading.
If \(f_1,\ldots,f_N\in\mathcal H_n\) are orthonormal and each satisfies
\(\int_A|f_j|^2d\mu\geq1-\eps\), then
\begin{equation}\label{eq:oprl-particle-capacity}
    N(1-\eps)
       \leq\nu_n(A)=\mathbb E\,\#\{i:x_i\in A\}.
\end{equation}
Thus the expected particle count is also the capacity of \(A\) for
orthogonal states.  More generally,
Theorem~\ref{thm:frame-concentration} identifies the near-one spectrum
of the compression of \(\mathbf1_A\) to \(\mathcal H_n\): its effective
dimension is controlled by the same expected count and by the kernel
leakage across \(A\).  The displayed inequality is its
orthonormal-family consequence.

\subsubsection*{A worked varying-weight example: the GUE}

For the GUE, the preceding filling law becomes the concrete
semicircle formula \eqref{eq:gue-filling-limit}: a fraction
\(\varrho\) of the modes has minimum quadratic compressed energy
\(n\int_0^\varrho F_{\rm sc}^\dagger(s)\,ds+o(n)\).  This is a
statement about \(T_{x^2}^n\), rather than just about the zeros of a
Hermite polynomial.

Take
\[
    d\mu_n(x)=\sqrt{\frac n{2\pi}}e^{-nx^2/2}\,dx,
    \qquad \mathcal H_n=\{\text{polynomials of degree less than }n\}.
\]
The law \eqref{eq:oprl-determinantal-law}, with this varying weight,
is the eigenvalue law of the Gaussian unitary ensemble with
quadratic potential \(x^2/2\).  The recurrence coefficients are
\[
    a_j^{(n)}=\sqrt{j/n},\qquad b_j^{(n)}=0.
\]
Hence \(a_n^{(n)}=1\), and the localization error in
\eqref{eq:oprl-localization-bound} is at most \(2/(n\eps^2)\).
The normalized frame measures converge to
\[
    dm_n(x)=\frac1nK_n(x,x)\,d\mu_n(x)
       \rightharpoonup
       d\nu_{\rm sc}(x)
          =\frac1{2\pi}\sqrt{4-x^2}\,
                              \mathbf1_{[-2,2]}(x)\,dx.
\]
This normalization can be checked directly from the recurrence.
For each fixed \(k\), the diagonal entries of its \(2k\)-th power
are sums over closed nearest-neighbor paths.  Away from the lower
endpoint, each path starting at \(j\) has weight
\((j/n)^k+o(1)\), and there are \(\binom{2k}{k}\) such paths.
Averaging \(0\leq j<n\) therefore gives
\[
    \int x^{2k}\,dm_n(x)\longrightarrow
       \binom{2k}{k}\int_0^1t^k\,dt
       =\frac1{k+1}\binom{2k}{k};
\]
odd moments vanish.  The finitely many paths meeting the lower
endpoint have vanishing normalized contribution.  These moment
limits and the bounded higher moments imply weak convergence to
the uniquely determined semicircle measure.

Here an unbounded cost can also be handled explicitly.  The recurrence
gives
\[
    \int x^2\,dm_n(x)=1,\qquad
    \int x^4\,dm_n(x)=2+\frac1{n^2}.
\]
Using the marginal identity for the overlap measure, for every
compact \(K\subset\R\) and \(R>0\) we obtain
\[
    \int_K\int_{\{y^2>R\}}y^2Q_n(x,dy)\,dm_n(x)
       \leq\int_{\{y^2>R\}}y^2\,dm_n(y)
       \leq\frac3R.
\]
This proves \eqref{eq:kernel-moment-tail} for \(a(x)=x^2\).
All polynomials have finite Gaussian moments.  Consequently,
\begin{equation}\label{eq:gue-filling-limit}
    \frac1n\sum_{j=1}^{N_n}\lambda_{j,n}(x^2)
       \longrightarrow
          \int_0^\varrho F_{\rm sc}^\dagger(s)\,ds,\qquad
    F_{\rm sc}(t)=\nu_{\rm sc}([-\sqrt t,\sqrt t]),
\end{equation}
whenever \(N_n/n\to\varrho\in(0,1)\).
This is a concrete use of weak convergence with varying weights
and controlled energy tails.

One must distinguish the compressed cost from a function of the
compressed coordinate.  If \(J_n=T_x^n\) is the finite Jacobi matrix,
then
\[
    T_{x^2}^n
       =J_n^2+(a_n^{(n)})^2
                    (p_{n-1}\otimes p_{n-1}).
\]
Indeed, insert \(P_n+(I-P_n)\) between the two multiplication
operators; only \(xp_{n-1}\) has a component outside \(\mathcal H_n\).
The eigenvalues of \(J_n\) are the zeros of \(p_n\), whereas
\eqref{eq:gue-filling-limit} concerns \(T_{x^2}^n\).
In the GUE the correction has trace one, so it does not affect the
normalized filling limit.  These examples concern projection
determinantal ensembles, including the unitary (\(\beta=2\)) matrix
ensembles; no determinantal representation for general \(\beta\)
is being assumed.

\end{document}